\theoremstyle{plain}
\theoremstyle{plain}
\newtheorem{thm}{Theorem}[section]
\newtheorem{cor}[thm]{Corollary}
\newtheorem{lem}[thm]{Lemma}
\newtheorem{prop}[thm]{Proposition}
\theoremstyle{definition}
\newtheorem{defi}[thm]{Definition}
\newtheorem{conj}[thm]{Conjecture}
\newtheorem{nota}[thm]{Notation}
\newtheorem{rem}[thm]{Remark}
\newtheorem{rems}[thm]{Remarks}
\newtheorem{exa}[thm]{Example}
\newtheorem{sit}[thm]{}
\newcounter{caseinproof}
\newcommand{\Der}{\operatorname{Der}}
\def\AA{{\mathbb A}}
\def\NN{{\mathbb N}}
\def\ZZ{{\mathbb Z}}
\def\QQ{{\mathbb Q}}
\def\TT{{\mathbb T}}
\newcommand{\T}{{\mathbb T}}
\newcommand{\A}{{\mathbb A}}
\newcommand{\N}{{\mathbb N}}
\newcommand{\Z}{{\mathbb Z}}
\def\kk{{\mathbb{K}}}
\def\GG{{\mathbb{G}}}
\def\CC{{\mathbb C}}
\def\cO{{\mathcal{O}}}
\newcommand{\p}{{\partial}}
\def\span{\mathop{\span}}
\def\Aut{\mathop{\rm Aut}}
\def\Aff{\mathop{\rm Aff}}
\def\SAut{\mathop{\rm SAut}}
\def\SAff{\mathop{\rm SAff}}
\def\ad{\mathop{\rm ad}}
\def\Ad{\mathop{\rm Ad}}
\def\LND{\mathop{\rm LND}}
\def\SL{\mathop{\rm SL}}
\def\sl{\mathop{\rm sl}}
\def\Cl{\mathop{\rm Cl}}
\def\Stab{\mathop{\rm Stab}}
\def\Spec{\mathop{\rm Spec}}
\def\Transl{\mathop{\rm Transl}}
\def\Tame{\mathop{\rm Tame}}
\def\STame{\mathop{\rm STame}}
\def\Der{\mathop{\rm Der}}
\def\deg{\mathop{\rm deg}}
\def\exp{\mathop{\rm exp}}
\def\reg{\mathop{\rm reg}}
\def\Hom{\mathop{\rm Hom}}
\def\Cox{\mathop{\rm Cox}}
\def\Jac{\mathop{\rm Jac}}
\def\span{\mathop{\rm span}}
\def\ll1{l_{\lambda}^{-1}(1)}
\def\lm1{l_{\mu}^{-1}(1)}
\DeclareRobustCommand{\indlim}{\varinjlim\nolimits}
\newcommand\qmatrix[2][1]{\left(\renewcommand\arraystretch{#1}
\begin{equation}gin{array}{*{20}r}#2\end{array}\right)}
\thanks{
The first author was supported by the grant RSF 19-11-00172. The second author was partially 
supported by the HSE University Basic Research Program, Russian Academic 
Excellence Project '5-100'.}
\thanks{This work
was done during a stay of the first and the second authors at the  Institut
Fourier, Grenoble, France. They would like to thank this
institution for  hospitality, support, and excellent working conditions.}
\begin{document}

\title{Infinite transitivity, finite generation,  and Demazure roots}
\author{I.~Arzhantsev, K.~Kuyumzhiyan, and M.~Zaidenberg}
\address{National Research University Higher School of 
Economics, Russian Federation, Faculty of Computer Science, 3
Kochnovskiy Proezd, Moscow, 125319 Russia}
\email{arjantsev@hse.ru}
\address{National Research University Higher School of 
Economics, Russian Federation,  Faculty of
Mathematics, 6\ Usacheva str.,
Moscow, 119048 Russia}
\email{karina@mccme.ru}
\address{Universit\'e Grenoble Alpes, CNRS, Institut Fourier, F-38000 Grenoble, France
}
\email{Mikhail.Zaidenberg@univ-grenoble-alpes.fr}
\date{}

\begin{abstract} 
An affine algebraic variety $X$ of dimension $\ge 2$ is called \emph{flexible} if the subgroup $\SAut(X)\subset\Aut(X)$ generated by the one-parameter unipotent subgroups acts $m$-transitively on ${\rm reg}\,(X)$ for any $m\ge 1$. In \cite{AKZ} we proved that any nondegenerate toric affine variety $X$ is flexible. In the present paper we show that one can find a subgroup of $\SAut(X)$ generated by a finite number of one-parameter unipotent subgroups which has the same transitivity property, provided the  toric variety $X$ is smooth in codimension two.  For $X=\mathbb{A}^n$ with $n\ge 2$,  three such subgroups suffice. 
\end{abstract} 
\maketitle

\thanks{
{\renewcommand{\thefootnote}{} \footnotetext{ 2010
\textit{Mathematics Subject Classification:}
14R20,\,32M17.\mbox{\hspace{11pt}}\\{\it Keywords}:  affine
variety, toric variety, group action, one-parameter subgroup, Demazure root, transitivity.}}

{\footnotesize \tableofcontents}

\section{Introduction} Let  $\kk$ be  an algebraically closed field of  characteristic zero, and
let $X$ be an affine variety  over $\kk$. A one-parameter subgroup $H$ of $\Aut(X)$ isomorphic as an algebraic group to the additive group $\GG_a$ of the base  field $\kk$ is called a \emph{unipotent} one-parameter subgroup, or a \emph{$\GG_a$-subgroup}, for short. One can consider the subgroup $\SAut(X)\subset\Aut(X)$ generated by all the one-parameter unipotent subgroups. It is known (\cite[Thm.\ 2.1]{AKZ}) that for a toric affine variety $X$ with no torus factor (that is, a nondegenerate toric variety) the group  $\SAut(X)$ acts infinitely transitively on the smooth locus $\reg (X)$, that is, $m$-transitively for any $m\ge 1$. Varieties $X$ with this property are called \emph{flexible} (\cite{AFKKZ, AKZ}). Actually, the simple transitivity of $\SAut(X)$ in $\reg(X)$ already guarantees that $X$ is flexible (\cite[Thm.\ 0.1]{AFKKZ}). The same is true for quasi-affine varieties (\cite{APS, FKZ}). In turn, the flexibility implies several other useful properties, for instance, the unirationality, see, e.g., \cite{AFKKZ, BKK, Pop14}. The flexibility has found important applications, e.g., to the Zariski cancellation problem (\cite{FKZ-I}). It is known (\cite[Thm.\ 1.1]{FKZ}) that the flexibility survives upon passing to the complement of a subvariety of codimension at least 2. Different flexibility properties are intensively studied in complex analytic geometry, see, e.g.,  \cite{AFKKZ,  For, KK}.

Besides the  toric affine  varieties  with no torus factor
 there are several other interesting classes of flexible affine varieties,  see, e.g., \cite{AFKKZ, AKZ,  APS, BEE, Dub, MPS, PW, Per,  Pop14, PZ, Sha}.   

In fact, for a flexible $X$ certain proper subgroups $G$ of $\SAut(X)$ act also  infinitely transitively on $\reg(X)$, or at least on a Zariski dense open subset of $\reg(X)$. This is the case for a subgroup $G$  generated by a sufficiently rich family of $\GG_a$-subgroups of  $\SAut(X)$, see \cite[Thm.\ 2.2]{AFKKZ}. Let us stay on this in more detail. 

Let $\LND(X)$ stand for the set of all nonzero locally nilpotent derivations (LNDs, for short) of the structure algebra $\cO_X(X)$. Letting $A=\cO_X(X)$ we also write $\LND(A)$ instead of $\LND(X)$. Any $\partial\in\LND(X)$ is a generator of  a $\GG_a$-subgroup $H=\exp(\kk\partial)$ of $\Aut(X)$, and any $\GG_a$-subgroup $H\subset\Aut(X)$ has the form $H=\exp(\kk\partial)$ for some $\partial\in {\LND}\,(X)$. If $a\in\ker\partial$ then $a\partial$ is again an LND called a \emph{replica} of $\partial$. The subgroup $H(a)=\exp(\kk a\partial)$ is called a \emph{replica} of $H$. 

A family $\mathfrak{F}$ of $\GG_a$-subgroups of $\Aut(X)$ is called \emph{saturated} if 
\\
(i) any replica of $H\in\mathfrak{F}$ belongs to $\mathfrak{F}$, and\\
(ii) $\mathfrak{F}$ is closed under conjugation by the elements of the subgroup $G=G(\mathfrak{F})$  generated by the members of $\mathfrak{F}$ (\cite[Def.\ 2.1]{AFKKZ}).\\
Notice (\cite[Lem.\ 4.6]{FKZ-I}) that for any family $\mathfrak{F}$ which verifies (i) there exists a larger family $\mathfrak{F}'$ satisfying both (i) and (ii) such that $G(\mathfrak{F})=G(\mathfrak{F}')$.
If $G(\mathfrak{F})$ has an open orbit in $X$ and $\mathfrak{F}$ is saturated then the action of $G$ on this orbit  is infinitely transitive (\cite[Thm.\ 2.2]{AFKKZ}). By  \cite[Prop.\ 2.15]{FKZ}  one can find such a family $\mathfrak{F}$ composed by the replicas of just two LNDs.

We say that $X$ is \emph{generically flexible} if $\SAut(X)$  acts on $X$ with an open orbit. For instance (\cite{Giz}), any Gizatullin surface $X$ is generically flexible. Notice that the open orbit of $\Aut(X)$
can be smaller than $\reg(X)$, see, e.g., \cite{Kov} for examples of Gizatullin surfaces with this property. 

The  assumption of saturation could be too restrictive in applications. The aim of the present paper is to elaborate more moderate conditions on the family $\mathfrak{F}$  which still guarantee the infinite transitivity of $G(\mathfrak{F})$ on the open orbit. In Section~\ref{sec:inf-trans} we formulate such conditions, see Theorem~\ref{th-inf-tr}. It occurs that for a generically flexible variety, there exists a countable family of $\GG_a$-subgroups which generates a group acting infinitely transitively on its open orbit, see Corollary~\ref{cor:countable}. We observe in Section~\ref{sec:orbits} that the orbits and the transitivity of an algebraically generated group $G\subset\Aut(X)$ are not affected upon passing to the closure $\overline{G}$, see Proposition~\ref{lem:many-points}. 
The remaining part of the paper appeared as a result of our discussions on the following

\begin{conj}\label{sit:conjecture} \emph{Any generically flexible affine variety $X$ admits a finite collection $\{H_1,\ldots,H_N\}$ of $\GG_a$-subgroups of $\Aut(X)$ such that the group $G=\langle H_1,\ldots,H_N\rangle $ acts infinitely transitively on its open orbit.}
\end{conj}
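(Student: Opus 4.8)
The plan is to reduce Conjecture~\ref{sit:conjecture} to the criterion of Theorem~\ref{th-inf-tr}: for a generically flexible $X$ with open orbit $O\subset X$, one wants a finite collection $\{H_1,\dots,H_N\}$ of $\GG_a$-subgroups whose generated group $G$ — equivalently, by Proposition~\ref{lem:many-points}, its closure $\overline G$ — has $O$ as an open orbit and generates, together with its conjugates and sufficiently many replicas inside $\overline G$, a family meeting the hypotheses of that theorem.

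A first, soft observation holds for any generically flexible $X$. By Corollary~\ref{cor:countable} there is a countable family $\{H_i\}_{i\ge1}$ with $G_\infty:=\langle H_i\mid i\ge1\rangle$ acting infinitely transitively on $O$. Writing $G_k=\langle H_1,\dots,H_k\rangle$ we have $G_\infty=\bigcup_k G_k$, an increasing union; fixing $x_0\in O$, the orbits $G_k\cdot x_0$ form an increasing chain of irreducible constructible subsets of the irreducible variety $O$ with union $O$, so $G_{k_0}\cdot x_0$ is dense in $O$ for some $k_0$, and hence $\overline{G_{k_0}}$ has $O$ as an open orbit. Thus a finite subfamily already yields an open orbit; what is missing is the passage from ``open orbit'' to ``infinitely transitive on the open orbit''.

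This last passage is where I expect to settle the conjecture in the toric case, and where the Demazure roots enter. For a nondegenerate toric variety $X=X_\Sigma$ the group $\SAut(X)$ is generated by the root subgroups $H_e$, $e$ ranging over the Demazure roots of $\Sigma$, and the replicas of $H_e$ are the subgroups $\exp(\kk\,\chi^m\p_e)$ with $\langle m,\rho_e\rangle=0$. Two features should let us reach all of these from finitely many seeds: (a) whenever $e$ and $-e$ are both roots — as happens for the roots $-e_i^*+e_j^*$ of $X=\AA^n$ — the subgroup $\langle H_e,H_{-e}\rangle$ is an image of $\SL_2$ whose maximal torus acts on $\p_e$ by scalars, thereby producing a one-parameter family of replicas of $H_e$ by conjugation; and (b) commutators of root subgroups based at adjacent rays of a two-dimensional cone, together with conjugation by the $\SL_2$'s from (a), produce further root subgroups. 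The hypothesis that $X$ is smooth in codimension two is what makes the combinatorics of rays and two-dimensional cones manageable, so that a finite seed $e_1,\dots,e_N$ of roots, chosen to reach every ray, generates inside $\overline{\langle H_{e_1},\dots,H_{e_N}\rangle}$ all root subgroups and enough of their replicas; infinite transitivity then follows as in \cite{AKZ,AFKKZ}. For $X=\AA^n$ one can be completely explicit and show that three subgroups suffice: a translation $H_1=\exp(t\,\p/\p x_1)$; a single ``Jordan-block'' shear $H_2=\exp\bigl(t(x_1\p/\p x_2+\dots+x_{n-1}\p/\p x_n)\bigr)$, whose conjugates already sweep $H_1$ onto all translations $\exp(\kk\,\p/\p x_j)$; and one genuinely nonlinear generator, e.g.\ $H_3=\exp(t\,x_2^2\,\p/\p x_1)$, so that iterated commutators produce polynomial vector fields of unbounded degree, $\overline{G}$ becomes infinite-dimensional, and the conditions of Theorem~\ref{th-inf-tr} can be checked by hand.

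The main obstacle is the general case. Outside the toric world there is no classification of LNDs, hence no way to write down generators; and the soft reduction above only produces an open orbit for a finite subfamily, not infinite transitivity, since a priori each new degree of transitivity could demand a fresh generator — it is exactly the bound on the number of generators, uniform in $m$, that one does not know how to obtain. Even for toric $X$ the technical heart is the bookkeeping just described: verifying that the chosen finite seed of Demazure roots really does generate, in the closure, a family satisfying Theorem~\ref{th-inf-tr}; this is where smoothness in codimension two is used essentially, and removing that hypothesis is the remaining gap toward the full conjecture.
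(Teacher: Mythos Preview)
The statement you are attempting to prove is labeled in the paper as a \emph{Conjecture}; the paper does not prove it in general, and your proposal correctly identifies the obstacle in the last paragraph. So there is no ``paper's own proof'' to compare against for the full statement, and your proposal is best read as a programme rather than a proof. That said, it is worth comparing your outline for the special cases with what the paper actually does.

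Your ``soft observation'' that finitely many $H_i$ already yield an open orbit is correct and useful background, but as you note it does not touch infinite transitivity; the paper does not pursue this line.

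For $X=\A^n$, your proposed triple $(H_1,H_2,H_3)$ is not the paper's, and your sketch leaves a real gap: the single Jordan-block shear $H_2$ together with the translation $H_1$ does not obviously generate (even in closure) all of $\SAff_n$; you get all translations and the one nilpotent direction $N$, but not a second nilpotent transverse enough to give $\SL(n,\kk)$. The paper's Theorem~\ref{thm:III} instead invokes \cite{Chi} to pick two nilpotents $x,y\in\sl(n,\kk)$ with $\langle U_x,U_y\rangle=\SL(n,\kk)$, and then uses a degeneration (Lemma~\ref{lem:two-roots}) to realise $U_x$ inside $\overline{\langle H_u,H_{e_2}\rangle}$, so that $\overline{\langle U_y,H_{e_2},H_u\rangle}\supset\SAff_n$; infinite transitivity then comes from Theorem~\ref{thm:V} and Proposition~\ref{lem:many-points}. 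Your $H_3$ coincides with the paper's $H_u$, but the mechanism producing $\SL(n,\kk)$ is different and, in your version, incomplete.

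For the toric case, your mechanism (a) --- opposite roots giving copies of $\SL_2$ whose tori rescale $\partial_e$ --- is not what the paper uses, and in fact need not be available: a general nondegenerate toric affine variety has no semisimple roots at all (no root $e$ with $-e$ also a root), so the $\SL_2$'s you invoke may simply not exist. The paper's engine is instead the degeneration technique of \S\ref{ss:degeneration}: from two root subgroups $H_{e_1},H_{e_2}$ with $\langle\rho_2,e_1\rangle\ge 1$ one shows, via the Baker--Campbell--Hausdorff formula and a torus limit on the Newton polytope (Proposition~\ref{prop: principal-part}, Corollary~\ref{cor:NP-Ad}), that $H_{e_2+\delta e_1}\subset\overline{\langle H_{e_1},H_{e_2}\rangle}$ (Lemma~\ref{lem:two-roots}). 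Smoothness in codimension two is used, as you guessed, to manufacture for each ray $\rho_i$ auxiliary roots $e_2,e_3$ on adjacent regular $2$-faces with $\langle\rho_1,e_2\rangle=\langle\rho_1,e_3\rangle=0$, so that repeated application of Lemma~\ref{lem:two-roots} produces inside $\overline G$ a rank-$(n{-}1)$ monoid of replicas of $H_{e_1}$; this is exactly the data $(\partial_i,A_i)$ that feeds into Theorem~\ref{th-inf-tr}$(\gamma)$. Your sketch aims at the same endpoint but by a route that is not generally available.
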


In Section~\ref{sec:toric-inf-trans} we fix this conjecture for toric affine varieties under a certain mild restriction.
In Section~\ref{sec:toric} we recall some basics on toric varieties and Cox rings. We deal there with the LNDs of the structure ring of a toric affine variety $X$ which are normalized by the  given torus $\TT$ acting on $X$ with an open orbit. In the sequel we refer to $\TT$ as to the \emph{acting torus} of $X$. The degree $e$ of such an LND $\p$ is a lattice vector called a \emph{Demazure root}. The corresponding $\GG_a$-subgroup $H_e\subset\Aut(X)$ normalized by $\TT$ is called a (\emph{Demazure}) \emph{root subgroup}.  

Our approach exploits hardly the following phenomenon. Consider a group $G$ acting effectively on a toric affine variety $X$ and generated by its unipotent subgroups. The closure $\overline{G}$ of $G$ with respect to the ind-topology could contain more Demazure root subgroups than the group $G$ itself. 
However, the multiple transitivity of $\overline{G}$  on its orbit is inherited by the group $G$, see Proposition~\ref{lem:many-points}. 

To describe some extra Demazure root subgroups contained in $\overline{G}$ we develop in Section~\ref{ss:degeneration} certain degeneration techniques. Given an LND $\p\in\Der(\cO_X(X))$ generating a one-parameter unipotent subgroup of $G$ we define its Newton polytope $N(\p)$ with respect to the acting torus. The extremal points of this polytope correspond to one-parameter unipotent root subgroups which belong to $\overline{G}$, see Proposition~\ref{prop: principal-part}. To find a convenient (non-root) LND $\p$ we conjugate one Demazure root subgroup by a second one which does not centralize the first. The Newton polytope $N(\p)$ of the resulting LND $\p$ occurs to be a segment with one of its endpoints being a desired extra Demazure root. This segment can be found explicitly by using a version of the Baker-Campbell-Hausdorff formula, see Corollary~\ref{cor:NP-Ad}. 

The simplest toric affine varieties are the affine spaces $\A^n=\A^n_\kk$. In this case, both the affine group ${\Aff}_n$ and the group SL$(n,\kk)$ extended by just one root subgroup act infinitely transitively on their open orbits, see, e.g., Corollary ~\ref{cor:SLn}. This is based on the results of Bodnarchuk (\cite{Bod02-2, Bod05}), Edo (\cite{Edo}), and Furter (\cite{Fur15}) concerning cotame automorphisms of the affine spaces, see Definition~\ref{sit:tame}. 
We establish the following facts, see Theorems~\ref{thm:V} and~\ref{thm:III}.

\begin{thm}\label{thm:main-1}
For any $n\ge 2$ one can find three $\GG_a$-subgroups of $\Aut(\A^n)$ which generate a subgroup  acting  infinitely transitively on $\A^n$. The same is true for some $n+2$ root subgroups. 
\end{thm}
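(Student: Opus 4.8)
The plan is to treat the two assertions separately, using the degeneration machinery from Section~\ref{ss:degeneration} together with the cotameness results of Bodnarchuk, Edo, and Furter quoted after Definition~\ref{sit:tame}. For the first assertion I would start from a pair of $\GG_a$-subgroups $H_1,H_2$ whose joint action already realizes a large, explicitly understood subgroup of $\Aut(\A^n)$: concretely, take $H_1$ to be a root subgroup inside the torus-normalized family generating $\SL(n,\kk)$ (so that, invoking Corollary~\ref{cor:SLn}, two carefully chosen root subgroups generate a group $G_0$ whose closure $\overline{G_0}$ contains $\SL(n,\kk)$ extended by one more root subgroup, hence acts infinitely transitively on the open orbit $(\kk^*)^n$, or even on $\A^n\setminus\{0\}$). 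The point is that $\SL(n,\kk)$ together with a single additional root subgroup is already infinitely transitive on $\A^n\setminus\{0\}$ by the cotameness input; so the real content is to produce $\SL(n,\kk)$ itself — or a group whose ind-closure contains it — from finitely many $\GG_a$-subgroups.

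The third $\GG_a$-subgroup is then used to upgrade transitivity from $\A^n\setminus\{0\}$ to all of $\A^n$, i.e.\ to move the origin: one chooses an LND $\partial_3$ (not torus-normalized) whose flow displaces $0$, for instance a suitable translation-type or shear-type subgroup, and combines it with $G_0$ via Proposition~\ref{lem:many-points} so that the orbit becomes all of $\A^n$. Infinite transitivity on $\A^n$ then follows because we may always conjugate the infinitely transitive action of $\overline{G_0}$ on $\A^n\setminus\{0\}$ by an element of $\langle H_3\rangle$ carrying any prescribed finite tuple (including tuples meeting $0$) into $\A^n\setminus\{0\}$, and Proposition~\ref{lem:many-points} guarantees that $m$-transitivity of the closure descends to $G=\langle H_1,H_2,H_3\rangle$. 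This gives the three-subgroup statement.

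For the second assertion — that $n+2$ \emph{root} subgroups suffice — one cannot use arbitrary $\GG_a$-subgroups, so the idea is to realize everything through Demazure roots of $\A^n$ and the degeneration trick of Section~\ref{ss:degeneration}. The standard $n$ coordinate shears $\partial_i = \partial/\partial x_i$ are root subgroups; conjugating one coordinate shear by another that does not centralize it produces a non-root LND whose Newton polytope $N(\partial)$, by Corollary~\ref{cor:NP-Ad}, is a segment one of whose endpoints is a \emph{new} Demazure root lying in $\overline{G}$ by Proposition~\ref{prop: principal-part}. Iterating, the ind-closure of the group generated by the $n$ coordinate shears already contains a rich supply of root subgroups — in fact enough to generate a group mapping onto $\mathrm{SL}(n,\kk)$ via its torus-normalized root subgroups. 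Adding two further carefully chosen root subgroups (one to break the torus-fixed-point obstruction at $0$, one to supply the extra root making $\mathrm{SL}(n,\kk)$ infinitely transitive on $\A^n\setminus\{0\}$ as in Corollary~\ref{cor:SLn}) then yields a total of $n+2$ root subgroups whose closure acts infinitely transitively on $\A^n$; Proposition~\ref{lem:many-points} again passes this down to the group itself.

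The main obstacle I anticipate is bookkeeping in the second part: one must verify that the specific $n$ coordinate root subgroups, after the Baker–Campbell–Hausdorff-style degenerations of Corollary~\ref{cor:NP-Ad}, generate (in the ind-closure) precisely the root subgroups needed to recover all of $\mathrm{SL}(n,\kk)$, and that only two extra roots — not more — are required to reach the origin and to invoke the cotameness theorem. Controlling which Demazure roots appear as Newton-polytope vertices under repeated conjugation, and checking that the resulting collection is large enough without overshooting the count $n+2$, is the delicate combinatorial heart of the argument; the transitivity statements themselves are then formal consequences of Theorem~\ref{th-inf-tr}, Corollary~\ref{cor:SLn}, and Proposition~\ref{lem:many-points}.
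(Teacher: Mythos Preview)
Your proposal has a concrete error in the second assertion and a misalignment in the first.

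For the $n+2$ root subgroups claim, you propose starting from the $n$ coordinate derivations $\partial_i=\partial/\partial x_i$ and producing new roots by conjugating one by another. But these are the translation subgroups $H_{e_i}$, and they pairwise \emph{commute}; no conjugation produces anything new, and the Newton--polytope degeneration of Corollary~\ref{cor:NP-Ad} never gets off the ground. The paper's route is different: $\SAff_n$ is generated by $n+1$ root subgroups (one translation $H_1$ plus the $n$ elementary root subgroups $H(1,2),\ldots,H(n-1,n),H(n,1)$ of $\SL(n,\kk)$, see \eqref{eq:n+1}), and one then adjoins a single non-affine root subgroup $H_u$. The degeneration relations (i)--(iii) in the proof of Theorem~\ref{thm:V} show that $\overline{\langle \SAff_n, H_u\rangle}$ contains $H_v$ for every Demazure root $v$, whence Theorem~\ref{th-inf-tr} and Proposition~\ref{lem:many-points} give infinite transitivity on $\A^n$ (the open orbit is all of $\A^n$ because $\Transl_n\subset G$). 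So the combinatorics you anticipate is not about squeezing $\SL(n,\kk)$ out of commuting translations, but about climbing through the Demazure facets from $\SAff_n$ and one extra root.

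For the three-subgroup claim, your split into ``two subgroups yielding $\SL(n,\kk)$ plus a root in the closure'' and ``a third to move the origin'' does not match how the argument actually assembles. Two $\GG_a$-subgroups can generate $\SL(n,\kk)$ by \cite{Chi}, but the closure of that group is just $\SL(n,\kk)$; it will not contain an additional non-affine root subgroup, so Corollary~\ref{cor:SLn} is not yet available. In the paper's proof of Theorem~\ref{thm:III} the three subgroups are $U_y$, $H_{e_2}$, and a non-affine root subgroup $H_u$: the pair $(H_u,H_{e_2})$ degenerates to the elementary $H_{e_1-e_2}=U_x$, then $U_x,U_y$ give $\SL(n,\kk)$, and together with $H_{e_2}$ one has $\SAff_n\subset\overline{G}$. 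Since $H_u$ is also there, Theorem~\ref{thm:V} applies inside $\overline{G}$. The origin is handled not by a separate conjugation trick but automatically, because $\SAff_n$ contains the translations.
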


Our main result for toric affine varieties  (see Theorem~\ref{thm:toric-inf-trans}) is the following

\begin{thm}\label{thm:main-2}
For any toric affine variety $X$ of dimension at least 2, with no torus factor, and smooth in codimention $2$, one can find 
a finite collection of Demazure root subgroups such that the group generated by these acts infinitely transitively on the smooth locus $\reg(X)$. \end{thm}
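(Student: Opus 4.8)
The plan is to reduce Theorem~\ref{thm:main-2} to the combinatorics of the fan of $X$ together with the degeneration machinery developed in Section~\ref{ss:degeneration}. Let $X=X_\sigma$ be given by a full-dimensional cone $\sigma\subset N_\QQ$ with $N\cong\ZZ^n$, and let $M$ be the dual lattice. The Demazure roots of $\sigma$ are the lattice points $e\in M$ for which there is a ray generator $\rho_e$ of $\sigma$ with $\langle e,\rho_e\rangle=-1$ and $\langle e,\rho\rangle\ge 0$ for every other ray generator $\rho$; each such $e$ gives a root subgroup $H_e\subset\Aut(X)$ normalized by $\TT$. By \cite[Thm.\ 2.1]{AKZ} the full collection of all root subgroups generates a group acting infinitely transitively on $\reg(X)$; the content of the theorem is that \emph{finitely many} of them already suffice. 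The smoothness in codimension $2$ hypothesis enters precisely because it controls the geometry of $\reg(X)$ relative to the torus orbit stratification: every torus orbit of codimension $\le 2$ lies in $\reg(X)$, so the root subgroups we choose need only "see" the codimension $\le 1$ strata transversally, and the codimension $\ge 3$ orbits, being outside $\reg(X)$, impose no constraint.

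First I would isolate, for each ray $\rho_i$ of $\sigma$, one "primitive" Demazure root $e_i$ with $\langle e_i,\rho_i\rangle=-1$; finitely many such $e_1,\dots,e_d$ exist (one per ray, $d$ being the number of rays). The subgroups $H_{e_1},\dots,H_{e_d}$ together with the acting torus $\TT$ generate a group whose orbit structure I would analyze: using the explicit action of a root subgroup on the Cox ring coordinates (Section~\ref{sec:toric}), one checks that the group $\langle H_{e_1},\dots,H_{e_d}\rangle$ already acts transitively on a dense open subset of $\reg(X)$, and in fact its closure $\overline{G}$ in the ind-topology contains, via Proposition~\ref{prop: principal-part} and Corollary~\ref{cor:NP-Ad}, many further root subgroups: conjugating $H_{e_i}$ by $H_{e_j}$ produces an LND whose Newton polytope is a segment with an extremal point corresponding to a new Demazure root $e_i + c(e_j-?)\cdot$(an appropriate lattice combination). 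Iterating these conjugations one generates in $\overline{G}$ the root subgroups for a spanning set of Demazure roots, enough to invoke the infinite transitivity criterion of Theorem~\ref{th-inf-tr} (the "more moderate conditions" replacing full saturation). Then Proposition~\ref{lem:many-points} transfers the infinite transitivity of $\overline{G}$ on its open orbit back to the finitely generated group $G=\langle H_{e_1},\dots,H_{e_d},\,\text{finitely many more}\rangle$, and a separate argument (using that the codimension $\le 2$ orbits are smooth and that the root subgroups act nontrivially on each) shows the open orbit is all of $\reg(X)$.

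The step I expect to be the main obstacle is showing that finitely many conjugation-and-degeneration operations already produce root subgroups for \emph{enough} Demazure roots to make the transitivity criterion of Theorem~\ref{th-inf-tr} applicable. The set of all Demazure roots is typically infinite (for each ray $\rho_i$ the roots with $\langle e,\rho_i\rangle=-1$ form an infinite "slice" of $M$), and the naive saturation used in \cite{AKZ} exploits this infinitude; here one must pin down a finite sub-collection that still has the local transitivity property at every point of $\reg(X)$, i.e.\ whose associated LNDs span the tangent space at each point of each stratum of codimension $\le 2$. The plan is to argue stratum by stratum: on the open torus orbit the torus already acts transitively; on a codimension-one orbit $O_i$ (corresponding to ray $\rho_i$), the single root subgroup $H_{e_i}$ moves points off $O_i$, and combined with $\TT$ this suffices to reach the open orbit; for codimension-two orbits $O_{ij}$ one needs $H_{e_i}$ and $H_{e_j}$ jointly, plus possibly one extra root subgroup obtained from the Baker-Campbell-Hausdorff computation of Corollary~\ref{cor:NP-Ad}, to guarantee the orbit through a point of $O_{ij}$ meets the open orbit and that the stabilizer acts with enough freedom in the normal directions. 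Once local transitivity at every point of $\reg(X)$ is established for a finite family, the infinite transitivity follows formally from Theorem~\ref{th-inf-tr} and the closure argument of Proposition~\ref{lem:many-points}, and the $\reg(X)$ (as opposed to just a dense open subset) refinement uses smoothness in codimension $2$ to rule out "bad" points where the chosen LNDs might degenerate.
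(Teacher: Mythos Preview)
Your high-level architecture is right: start with finitely many root subgroups, use the conjugate-and-degenerate mechanism (Corollary~\ref{cor:NP-Ad}, Lemma~\ref{lem:two-roots}) to manufacture further root subgroups inside $\overline{G}$, then invoke Theorem~\ref{th-inf-tr} and transfer back to $G$ via Proposition~\ref{lem:many-points}. But there is a genuine gap at the point where you explain what the smoothness-in-codimension-$2$ hypothesis is for, and this gap propagates into the part you correctly flag as the main obstacle.

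You say the hypothesis enters because it governs which torus orbits lie in $\reg(X)$, so that one only needs to ``see'' strata of codimension $\le 2$. That is not how it is used. The hypothesis is equivalent to every $2$-dimensional face $\tau_{i,j}=\operatorname{cone}(\rho_i,\rho_j)$ of $\sigma$ being a \emph{regular} cone, i.e.\ $\rho_i,\rho_j$ extend to a $\ZZ$-basis of $N$. This is exactly what allows one to find, for a given ray $\rho_1$ and an adjacent ray $\rho_2$, a Demazure root $e_2\in\mathcal{S}_2$ with $\langle\rho_1,e_2\rangle=0$ (and likewise $e_3\in\mathcal{S}_3$ with $\langle\rho_1,e_3\rangle=0$). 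Without regularity the map $m\mapsto(\langle\rho_1,m\rangle,\langle\rho_2,m\rangle)$ need not hit $(0,-1)$, and no such root exists. The reason the value $0$ matters is that in Lemma~\ref{lem:two-roots} it forces $\delta=\langle\rho_1,e_2\rangle+1=1$, so conjugating $H_w$ (with $w\in\mathcal{S}_1$) by $H_{e_2}$ and degenerating lands you at the root $w+e_2$, a shift by a \emph{single} fixed lattice vector lying in the facet $\tau_1=\rho_1^\perp\cap\sigma^\vee$. Iterating with $e_2,e_3$ and translates $u_i=e_2+\eta_i$ by ray generators $\eta_i$ of $\tau_{1,2}^\vee$ builds a rank-$(n-1)$ submonoid $\mathcal{M}_1\subset\tau_1\cap M$ such that $H_{e_1+m}\subset\overline{G}$ for every $m\in\mathcal{M}_1$. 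That gives the finitely generated subalgebra $A_1=\kk[\chi^{v}\mid v\in\mathcal{M}_1]\subset\ker\partial_{\rho_1,e_1}$ with $[\operatorname{Frac}(\ker\partial_1):\operatorname{Frac}(A_1)]<\infty$, which is precisely the input Theorem~\ref{th-inf-tr} requires under condition~$(\gamma)$.

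Your proposal never produces these $A_i$; ``a spanning set of Demazure roots'' is not the same thing, and your stratum-by-stratum local-transitivity paragraph is aimed at simple transitivity, not at the hypotheses of Theorem~\ref{th-inf-tr}. In the paper, transitivity of $G$ on $\reg(X)$ is not argued stratum by stratum at all: it is imported wholesale from \cite[Thm.~2.1]{AKZ} and \cite[Prop.~1.5]{AFKKZ}, which already supply a finite list $H_1,\dots,H_r$ with $\langle H_1,\dots,H_r\rangle$ transitive on $\reg(X)$; the new work is entirely in adjoining, for $n$ linearly independent rays $\rho_1,\dots,\rho_n$, the finitely many additional root subgroups $H_{e_i},H_{u_j},H_{e_3},\dots$ needed to realize the rank-$(n-1)$ monoids $\mathcal{M}_i$ inside $\overline{G}$. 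So the fix is: replace your geometric reading of the codimension-$2$ hypothesis with the combinatorial one (regular $2$-faces $\Rightarrow$ roots with prescribed zero pairings $\Rightarrow$ $\delta=1$ in the degeneration step), and replace the vague ``spanning set'' with the explicit construction of the $A_i$.
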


\section{Infinite transitivity on the open orbit}\label{sec:inf-trans}

We are working over an algebraically closed field $\kk$ of characteristic zero. We let $\AA^n$ stand for the affine space of dimension $n$ over $\kk$, and $\mathbb{G}_a$ and $\mathbb{G}_m$ for the additive and the multiplicative groups of $\kk$, respectively, viewed as algebraic groups.

\begin{sit}\label{sit-2.1} Let $X$ be an affine variety  over $\kk$ of dimension~$n\ge 2$. Consider  a finite collection of pairwise non-collinear locally nilpotent derivations $\partial_1,\partial_2,\ldots,\partial_k$ of $\cO_X(X)$ which contains a subset of 
$n$ linearly independent derivations. For every $i=1,\ldots,k$ fix a finitely generated subalgebra  $A_i\subset\ker\partial_i$ such that the fraction field  ${\rm Frac}\,(A_i)$ has finite index in ${\rm Frac}\,(\ker\partial_i)$. 
Consider the following possibilities:
\begin{itemize} \item[$(\alpha)$] $\cO_X(X)$ is generated by $A_1,\ldots,A_k$;
\item[$(\beta)$] $[{\rm Frac}\,(\ker\partial_i):{\rm Frac}\,(A_i)]=1$ for some value of $i$;
\item[$(\gamma)$] $[{\rm Frac}\,(\ker\partial_i):{\rm Frac}\,(A_i)]>1$ for all $i=1,\ldots,k$. In the latter case we fix an extra element $b_1\in \ker\partial_1$ such that ${\rm Frac}\,(\ker\partial_1)$ is generated by $b_1$ and ${\rm Frac}\,(A_1)$. In cases ($\alpha$) and ($\beta$) one might take $b_1=0$.
\end{itemize}
Let~$G$ be the subgroup of $\SAut(X)$ generated by the $\GG_a$-subgroups 
$$H_0=\exp(\kk b_1\partial_1)\,\,\,\mbox{and}\,\,\, H_i(a_i)=\exp(\kk a_i\partial_i)\quad\mbox{where}\quad a_i\in A_i,\,\,\, i=1,\ldots,k\,.$$ 
Notice that $G$ acts on~$X$ with an open orbit ${{\mathscr{O}}_G}$, see ~\cite[Corollary~1.11a]{AFKKZ}. 
\end{sit}

The following theorem is the main result of this section.

\begin{thm}\label{th-inf-tr} If one of $(\alpha)$-$(\gamma)$ holds then
the action of~$G$ on ${{\mathscr{O}}_G}$ is infinitely transitive.
\end{thm}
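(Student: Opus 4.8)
The plan is to deduce Theorem~\ref{th-inf-tr} from the infinite transitivity theorem for \emph{saturated} families, \cite[Thm.\ 2.2]{AFKKZ}. Recall that $G$ already has an open orbit $\mathscr{O}_G$ by \cite[Corollary~1.11a]{AFKKZ}, and that any group containing an infinitely transitive subgroup is infinitely transitive on the same orbit; so it suffices to exhibit, inside $G$, a \emph{saturated} family $\mathfrak{F}$ of $\GG_a$-subgroups with $\mathscr{O}_{G(\mathfrak{F})}=\mathscr{O}_G$. Since for a saturated family \emph{all} replicas of each member lie in the family, hence in $G$, the whole content is concentrated in the following assertion, which I would prove first: for every $i$ the group $G$ contains $\exp(\kk\, a\partial_i)$ for \emph{every} $a\in\ker\partial_i$, not only for $a\in A_i$. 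Granting it, the family $\mathfrak{F}_0$ of all replicas of $\partial_1,\dots,\partial_k$ verifies axiom~(i), lies in $G$ (note that $H_0$ is itself such a replica, as $b_1\in\ker\partial_1$), and in fact $G=G(\mathfrak{F}_0)$; enlarging $\mathfrak{F}_0$ to a saturated family $\mathfrak{F}$ with the same generated group via \cite[Lem.\ 4.6]{FKZ-I}, and using that the $n$ linearly independent derivations among the $\partial_i$ force the open orbit of $G(\mathfrak{F})$ to coincide with $\mathscr{O}_G$, one concludes by \cite[Thm.\ 2.2]{AFKKZ}. (Alternatively one may bypass $\mathfrak{F}$ and run the $m$-transitivity induction of \cite{AFKKZ} directly: its inductive step only needs, for a prescribed finite $Z\subset\mathscr{O}_G$ and $p\notin Z$, replicas in $G$ vanishing on $Z$ whose values at $p$ span $T_p\mathscr{O}_G$, and the displayed assertion supplies them.)

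To prove the assertion I would fix $i$, set $\mathcal{A}_i=\{a\in\ker\partial_i:\exp(\kk a\partial_i)\subseteq G\}$, and note that $\mathcal{A}_i$ is a $\kk$-subspace of $\ker\partial_i$ containing $A_i$ (closure under sums and scalars is immediate from $\exp(a\partial_i)\exp(a'\partial_i)=\exp((a+a')\partial_i)$ and $\exp(\kk(ca)\partial_i)=\exp(\kk a\partial_i)$). The mechanism for enlarging $\mathcal{A}_i$ is conjugation by the remaining $\GG_a$-subgroups: for $a\in\mathcal{A}_i$ and $c\in\mathcal{A}_j$,
\[ \exp(sc\partial_j)\,\exp(ta\partial_i)\,\exp(-sc\partial_j)=\exp\!\bigl(t\,\Ad(\exp(sc\partial_j))(a\partial_i)\bigr)\in G , \]
and $\Ad(\exp(sc\partial_j))(a\partial_i)=\sum_{m\ge 0}s^{m}\delta_m$ is polynomial in $s$ since $\ad(c\partial_j)$ is locally nilpotent on $\Der\cO_X(X)$, with $\delta_0=a\partial_i$ and $\delta_1=[c\partial_j,a\partial_i]$. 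In the favourable ``triangular'' situation, e.g.\ $\partial_i(c)=0$ together with all brackets of order $\ge1$ already lying in $(\ker\partial_i)\cdot\partial_i$, one has $\delta_1=(c\,\partial_j(a))\partial_i$ and $\delta_m=0$ for $m\ge2$, so the identity reads $\exp\!\bigl(t(a+sc\,\partial_j(a))\partial_i\bigr)\in G$; multiplying by $\exp(-ta\partial_i)$ then yields $\exp(\kk\,(c\,\partial_j(a))\partial_i)\subseteq G$, a genuinely new replica whenever $c\,\partial_j(a)\notin A_i$. Iterating this ``transvection'' with $a$ ranging over generators of $A_i$ and $c$ over generators of the $A_j$ produces more and more coefficients.

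The hypotheses $(\alpha)$--$(\gamma)$ enter to guarantee that this process exhausts $\ker\partial_i$. In case $(\alpha)$, where $\cO_X(X)$ is generated by $A_1,\dots,A_k$, one writes an arbitrary $b\in\ker\partial_i$ as a polynomial in elements of the $A_j$ and fills it in, factor by factor, by iterated transvections, obtaining $\mathcal{A}_i=\ker\partial_i$. In case $(\gamma)$ the extra element $b_1$, which together with $A_1$ generates ${\rm Frac}(\ker\partial_1)$, is first placed into $\mathcal{A}_1$; then, since $[{\rm Frac}(\ker\partial_i):{\rm Frac}(A_i)]<\infty$ for all $i$, a denominator-clearing argument along the finite birational extensions $A_i\subseteq\ker\partial_i$ (respectively $A_1[b_1]\subseteq\ker\partial_1$) promotes each $\mathcal{A}_i$ to the full kernel; case $(\beta)$ is exactly the sub-case in which already ${\rm Frac}(A_i)={\rm Frac}(\ker\partial_i)$ for some $i$ and $b_1=0$.

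The hard part, I expect, is keeping everything \emph{inside $G$} rather than merely inside the ind-closure $\overline{G}$. When $\partial_i$ and $\partial_j$ do not commute the bracket expansion above is no longer triangular, the higher terms $\delta_m$ are non-trivial and need not be replicas of $\partial_i$, and the natural way to isolate a single $\delta_m$ is a rescaling-and-degeneration $s\to\infty$ that only produces $\exp(\kk\delta_m)$ in $\overline{G}$ (the ``principal part'' phenomenon of Section~\ref{ss:degeneration}). One must therefore either arrange the conjugations so that only triangular chains occur, or invoke the fact that orbits and multiple transitivity are unaffected by passing to $\overline{G}$. The second delicate point is the finite-index bookkeeping in cases $(\beta)$ and $(\gamma)$: the kernels $\ker\partial_i$ need not be finitely generated, so the passage from $A_i$ to $\ker\partial_i$ has to be performed at the level of fraction fields, via a Rosenlicht-type quotient and clearing of denominators, and this requires care to remain within $G$.
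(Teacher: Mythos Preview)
Your argument hinges on the assertion that $\mathcal{A}_i=\ker\partial_i$ for every $i$, and this is where the proof breaks down. The transvection mechanism you describe produces new coefficients only in the ``triangular'' situation $c\in\ker\partial_i\cap A_j$; but this intersection is generically of codimension two in $\cO_X(X)$, so the supply of admissible $c$ is far too thin to generate all of $\ker\partial_i$ from $A_i$. (Incidentally, even in the triangular case your claim that $\delta_m=0$ for $m\ge 2$ is incorrect: one has $\delta_m=\tfrac{1}{m!}c^m\partial_j^m(a)\,\partial_i$, so the conjugate is the replica with coefficient $\exp(sc\partial_j)(a)$; a Vandermonde argument then extracts each $c^m\partial_j^m(a)$, but this does not change the basic obstruction.) Outside the triangular case the conjugate $\Ad(\exp(sc\partial_j))(a\partial_i)$ is \emph{not} a replica of $\partial_i$ at all, and neither your suggestion to ``arrange only triangular chains'' nor the passage to $\overline{G}$ is supplied with an argument. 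In short, the statement that $G$ (or even $\overline{G}$) contains \emph{all} replicas of the $\partial_i$ is strictly stronger than the theorem, is nowhere proved, and may well be false under hypotheses $(\alpha)$--$(\gamma)$ alone. Your case analysis in the third paragraph presupposes this unproven claim rather than establishing it.

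The paper proceeds quite differently and never attempts to enlarge $\mathcal{A}_i$. It works directly with the replicas that are \emph{given}, namely $\exp(\kk a_i\partial_i)$ for $a_i\in A_i$ together with $H_0$, and proves the $m$-transitivity induction of \cite{AFKKZ} from scratch for this restricted data. The finite-index hypothesis is used geometrically: over a dense open set the map $\pi_i\colon X\to W_i=\Spec A_i$ has fibres consisting of exactly $d_i$ orbits of $H_i$, so $A_i$ already separates ``most'' pairs of $H_i$-orbits. The substance of the proof (Lemmas~\ref{two points} and~\ref{different-levels}) is to show that any finite set of points in $\mathscr{O}_G$ can be moved, by an element of $G$, into a configuration separated by every $A_i$; the extra element $b_1$ in case~$(\gamma)$ is used precisely once, to separate two points lying in distinct $H_1$-orbits over the same point of $W_1$. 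Once separation by the $A_i$ is achieved, the stabiliser argument (Lemma~\ref{prop1}) goes through using only replicas with coefficients in $A_i$. This is the missing idea: rather than manufacturing new replicas, one moves the points so that the existing replicas suffice.
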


This theorem is actually a refined version of Theorem~2.2 in~\cite{AFKKZ}. The proof follows, with some modifications, the lines of the proof of Theorem~2.2 in~\cite{AFKKZ}. 

\begin{lem}\label{lemma1-31mars}
Let $\Omega\subset {{\mathscr{O}}_G}$ be a dense open subset. 
Then for any finite collection of distinct points $Q_1,Q_2,\ldots,Q_m\in {{\mathscr{O}}_G}$ there 
exists $g\in G$ such that  
$g(Q_i)\in \Omega$ for every $i=1,2,\ldots,m$.
\end{lem}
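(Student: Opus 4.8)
The plan is to deduce Lemma~\ref{lemma1-31mars} from the standing hypotheses on $G$ in~\ref{sit-2.1}, using that $\mathscr{O}_G$ is a single $G$-orbit together with an irreducibility argument. First I would fix the finite collection of distinct points $Q_1,\dots,Q_m\in\mathscr{O}_G$ and consider the product action of $G$ on $X^m$, restricting attention to the $G$-orbit $Z$ of the point $(Q_1,\dots,Q_m)$. Since $\mathscr{O}_G$ is an open orbit, it is in particular a smooth irreducible variety, and hence so is the orbit $Z\subset\mathscr{O}_G^m$; in particular $Z$ is an irreducible constructible (indeed locally closed) subset of $X^m$.

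Next I would use the dense open set $\Omega\subset\mathscr{O}_G$ to build a dense open subset of $Z$. For each index $i$ let $\pi_i\colon X^m\to X$ be the $i$-th projection; then $\pi_i^{-1}(\Omega)$ is open in $X^m$, so $U_i:=Z\cap\pi_i^{-1}(\Omega)$ is open in $Z$. The key point is that each $U_i$ is \emph{nonempty}: because $Z$ is a $G$-orbit and $G$ acts transitively on $\mathscr{O}_G$, the image $\pi_i(Z)$ is all of $\mathscr{O}_G$, which meets the dense open set $\Omega$; hence some point of $Z$ has $i$-th coordinate in $\Omega$. Since $Z$ is irreducible, each nonempty open subset $U_i$ is dense in $Z$, and therefore the finite intersection $U:=\bigcap_{i=1}^m U_i$ is a nonempty open subset of $Z$. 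Any point of $U$ is of the form $g\cdot(Q_1,\dots,Q_m)=(g(Q_1),\dots,g(Q_m))$ for some $g\in G$ (as $Z$ is the $G$-orbit of $(Q_1,\dots,Q_m)$), and by construction $g(Q_i)\in\Omega$ for all $i=1,\dots,m$, which is exactly the assertion.

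The one technical point that needs care — and what I expect to be the main obstacle — is the claim that the $G$-orbit $Z\subset X^m$ is irreducible and that its nonempty open subsets are dense, i.e. that orbits of the algebraically generated group $G$ behave like orbits of an algebraic group. This is standard for connected algebraic groups but here $G$ is only an ind-group generated by $\mathbb{G}_a$-subgroups; however, by the theory of algebraically generated groups (Ramanujam/Popov, as used via~\cite[Corollary~1.11a]{AFKKZ} to produce the open orbit $\mathscr{O}_G$ in the first place) every $G$-orbit is a smooth locally closed irreducible subvariety, and the product action of $G$ on $X^m$ is again algebraically generated, so the same applies to $Z$. One should also note that $\mathscr{O}_G$ being the open orbit of $G$ forces $\mathscr{O}_G^m$ to contain the orbit $Z$, so the projections $\pi_i|_Z$ indeed land in $\mathscr{O}_G$; and since $\Omega$ is open and dense in the irreducible variety $\mathscr{O}_G$, it meets $\pi_i(Z)=\mathscr{O}_G$. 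With these observations in place the argument closes. Note that the hypotheses $(\alpha)$–$(\gamma)$ play no role in this lemma; they will enter only in the subsequent steps of the proof of Theorem~\ref{th-inf-tr}.
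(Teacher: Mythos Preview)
Your argument is correct, but it is not the route the paper takes. The paper invokes \cite[Prop.~1.5]{AFKKZ} to choose finitely many $\mathbb{G}_a$-subgroups $U_1,\ldots,U_N\subset G$ such that $\mathscr{O}_G=(U_1\cdots U_N).x$ for every $x\in\mathscr{O}_G$; this yields surjective morphisms $\varphi_i\colon\mathbb{A}^N\to\mathscr{O}_G$, $(t_1,\ldots,t_N)\mapsto (U_1(t_1)\cdots U_N(t_N)).Q_i$, and one then intersects the dense open preimages $\varphi_i^{-1}(\Omega)$ inside the irreducible $\mathbb{A}^N$ to find a common parameter value. Your proof instead works inside $X^m$ with the diagonal $G$-orbit $Z$ of $(Q_1,\ldots,Q_m)$, uses transitivity on $\mathscr{O}_G$ to see that each projection $\pi_i(Z)$ meets $\Omega$, and intersects the resulting dense opens in the irreducible $Z$. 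Both proofs ultimately rest on the same body of facts from \cite{AFKKZ} (your irreducibility of $Z$ is \cite[Prop.~1.3]{AFKKZ}, which is proven via exactly the $\mathbb{A}^N$-parametrization the paper uses directly), so the difference is one of packaging: the paper's version is more explicit and self-contained, while yours is slicker but leans on the orbit structure theorem as a black box. One small quibble: your sentence ``$\mathscr{O}_G$ is \ldots\ smooth irreducible, and hence so is the orbit $Z$'' is a non sequitur as written; the irreducibility of $Z$ needs the citation you give later, not the ambient product.
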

\begin{proof}
By~\cite[Prop.\ 1.5]{AFKKZ} there is a finite ordered collection of (not necessarily distinct)
$\GG_a$-subgroups $U_1,U_2,\ldots,U_N$ in~$G$ such that for any~$x\in {{\mathscr{O}}_G}$ 
we have 
${{\mathscr{O}}_G}=(U_1\cdot\ldots\cdot  U_N).x$. This gives a surjective morphism $$\varphi_x\colon \AA^N\rightarrow {{\mathscr{O}}_G},\quad (t_1,\ldots,t_N)\mapsto (U_1(t_1)\cdot\ldots\cdot  U_N(t_N)).x\,.$$
Letting $\varphi_i=\varphi_{Q_i}$, $i=1,\ldots,m$ consider the dense open subset 
$$
\omega=\bigcap_{i=1}^m\varphi_i^{-1}(\Omega)
\subset\AA^N\,.$$ Pick up a point $(t_1,t_2,\ldots,t_N)\in \omega$, and let $g=U_1(t_1)\cdot\ldots\cdot U_N(t_N)\in G$. Then for any $i=1,\ldots,m$
one has $g(Q_i)\in\Omega$.
\end{proof}

In the sequel we use the following notation.

\begin{nota}\label{not:omega} Let $H_i=H_i(1)$, $i=1,\ldots,k$.
Letting $W_i=\Spec A_i$
consider the morphism $\pi_i\colon X\to W_i$ induced by the inclusion 
$A_i\hookrightarrow \cO_X(X)$. There is a Zariski open, dense subset $\omega_i\subset W_i$ such that on $U_i:=\pi_i^{-1}(\omega_i)$ 
there exists the geometric quotient $U_i/H_i$. The inclusion $A_i\subset\cO_{U_i}(U_i)$ induces a generically finite morphism $q_i\colon U_i/H_i\to W_i$. 
Due to our assumption, $\omega_i$ can be chosen so that $q_i$ is a finite morphism  onto its image, of degree $d_i:=[{\rm Frac}\,(\ker\partial_i):{\rm Frac}\,(A_i)]$.
One can find
a dense open subset $\Omega\subset {{\mathscr{O}}_G}$ such that 
\begin{itemize}
\item[(i)]   in each point $x\in\Omega$ the vectors  $\partial_1(x),\ldots,\partial_k(x)$
are pairwise non-collinear and generate  the tangent space $T_xX$;
\item[(ii)]  for $i=1,\ldots,k$ one has $\pi_i(\Omega)\subset {\reg} (W_i)$ and the restriction  $\pi_i\vert_\Omega\colon \Omega\to W_i$ is a smooth morphism;
\item[(iii)]  for $i=1,\ldots,k$ the fiber of $\pi_i\vert_\Omega$ over any point $w\in\pi_i(\Omega)$
is a dense open subset of the union of $d_i$ orbits of $H_i$;
\item[(iv)] in case ($\gamma$), for $i=1$ these orbits are separated by the function $b_1\in\ker\partial_1$ as in~\ref{sit-2.1}.
\end{itemize}
For each $i=1,\ldots,k$ there is a factorization
$$\pi_i\colon \Omega\stackrel{p_i}{\longrightarrow}\Omega/H_i\stackrel{q_i}{\longrightarrow} W_i\,.$$
\end{nota}

We have the following analogue of Lemma 2.10 in~\cite{AFKKZ}.

\begin{lem}\label{two points} 
For any pair of distinct points $Q_1,Q_2\in {{\mathscr{O}}_G}$ there exists 
$g\in G$ such that for every~$i=1,\ldots,k$ the points 
$g.Q_1$ and $g.Q_2$ are separated by 
$A_i$ for $i=1,\ldots,k$. 
\end{lem}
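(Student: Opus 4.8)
The plan is to prove Lemma~\ref{two points} by an inductive argument on the number of separating subalgebras, reducing to a one-step separation statement that is handled via the local structure of the quotient maps $\pi_i$ recorded in Notation~\ref{not:omega}. First I would fix the dense open subset $\Omega\subset{{\mathscr{O}}_G}$ with properties (i)--(iv), and observe that by Lemma~\ref{lemma1-31mars} it suffices to prove the statement for points lying in $\Omega$: indeed, given arbitrary distinct $Q_1,Q_2\in{{\mathscr{O}}_G}$, we may first move them into $\Omega$ by an element of $G$, and since being "separated by $A_i$" is preserved under nothing automatically, we instead arrange the whole separation once both points are in good position. So the reduction is: assuming $Q_1,Q_2\in\Omega$ are distinct, find $g\in G$ with $g.Q_1,g.Q_2\in\Omega$ and separated by every $A_i$, $i=1,\dots,k$.

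Next I would run an induction on $j=0,1,\dots,k$ with the hypothesis: there is $g_j\in G$ so that $g_j.Q_1,g_j.Q_2\in\Omega$ and $A_i$ separates them for all $i\le j$. The base case $j=0$ is the reduction above. For the inductive step, suppose $g_j$ has been found and set $P_\ell=g_j.Q_\ell$. If $A_{j+1}$ already separates $P_1,P_2$ we set $g_{j+1}=g_j$. Otherwise $\pi_{j+1}(P_1)=\pi_{j+1}(P_2)=:w$; by property (iii) the fiber of $\pi_{j+1}|_\Omega$ over $w$ is a dense open subset of a union of $d_{j+1}$ orbits of $H_{j+1}$, so $P_1,P_2$ either lie on distinct such orbits (in which case in case $(\gamma)$ with $j+1=1$ the function $b_1$ separates them, by (iv), and we use a suitable power of $H_0=\exp(\kk b_1\partial_1)$ to move $P_1$ off and keep the earlier separations), or they lie on the same $H_{j+1}$-orbit and differ by a nontrivial element of $H_{j+1}$. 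In the latter case $P_2=H_{j+1}(s).P_1$ for some $s\neq 0$, and applying $H_{j+1}(t)$ for general $t$ keeps both points in $\Omega$ (an open condition avoided on a proper closed set) while changing their $A_i$-images for $i\le j$: here is the crux, since applying an $H_{j+1}$-element can destroy the separations already achieved for $i\le j$. To control this I would argue that for each $i\le j$ the locus of $t$ for which $A_i$ fails to separate $H_{j+1}(t).P_1$ and $H_{j+1}(t).P_2$ is a proper closed subset of $\AA^1_t$ — it is closed because non-separation is the vanishing of finitely many regular functions along the orbit, and it is proper because at $t$ with $A_i(P_1)\neq A_i(P_2)$ (true for $t=0$) separation holds; similarly the locus where $A_{j+1}$ fails to separate is proper closed since for general $t$ the two moved points land on genuinely different $H_{j+1}$-orbit parameters modulo the finite ambiguity, or one invokes that $H_{j+1}(t).P_1$ and $H_{j+1}(t).P_2=H_{j+1}(s+t-t).$... — more carefully, one moves only $P_1$: replace the pair by $(H_{j+1}(t).P_1, P_2)$, which still have equal $\pi_{j+1}$-image but for general $t$ are separated by a coordinate transverse to the $H_{j+1}$-direction combined with $b_1$ as in (iv), while the $A_i$-separations for $i\le j$ persist off a proper closed set. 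Choosing $t$ avoiding the finite union of these proper closed sets yields $g_{j+1}$.

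The main obstacle, as flagged, is the simultaneous preservation of the earlier separations while creating the new one: one must check that moving a point by $H_{j+1}$ does not collapse its $A_i$-fibers for the previously-treated indices $i$, and this is exactly where one uses that $\Omega$ was chosen with $\partial_1(x),\dots,\partial_k(x)$ pairwise non-collinear and spanning $T_xX$ (property (i)), so that the $H_{j+1}$-orbit direction is transverse to the fibers of $\pi_i$ for $i\neq j+1$ at points of $\Omega$, forcing the set of bad parameters $t$ to be a proper — hence finite — subset of $\AA^1$. In case $(\gamma)$ the extra generator $H_0=\exp(\kk b_1\partial_1)$ together with property (iv) handles the finite orbit ambiguity $d_1>1$ for the index $i=1$, by separating points in distinct $H_1$-orbits over the same base point; in cases $(\alpha)$ and $(\beta)$ this subtlety is absent ($d_i=1$ for the relevant $i$, or $b_1=0$ suffices). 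After $k$ steps we obtain $g=g_k\in G$ separating $Q_1,Q_2$ by all of $A_1,\dots,A_k$, which is the assertion. I expect the routine part to be the dimension-count showing each bad locus in $\AA^1_t$ is proper closed, and the delicate part to be the bookkeeping in case $(\gamma)$ coordinating $H_0$ with the $H_i$'s.
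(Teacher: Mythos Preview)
Your inductive scheme has a genuine gap at the step where you try to create the new $A_{j+1}$-separation. You propose acting by $H_{j+1}$ (or, in case~$(\gamma)$ with $j+1=1$, by $H_0=\exp(\kk b_1\partial_1)$). But $A_{j+1}\subset\ker\partial_{j+1}$, so for every $a\in A_{j+1}$ and every $t$ one has $a(H_{j+1}(t).P_\ell)=a(P_\ell)$; the flow along $\partial_{j+1}$ is constant on $\pi_{j+1}$-fibers and can never manufacture an $A_{j+1}$-separation. The same applies to $H_0$ with respect to $A_1$. Your fallback ``move only $P_1$'' is not a group action: any $g\in G$ must act on both points simultaneously, and to fix $P_2$ while moving $P_1$ you would need a replica $H_i(a_i)$ with $a_i(P_2)=0$, $a_i(P_1)\neq 0$ --- which forces $a_i\in A_i$ for some index $i$ that \emph{already} separates the two points. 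At the base of your induction ($j=0$) no such index is available, so the first step never gets off the ground.

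The paper's proof avoids this by reversing the logic. It first arranges (using the $(\alpha)/(\beta)/(\gamma)$ trichotomy and, in case~$(\gamma)$, the extra function $b_1$ or a periodicity argument along a common $H_1$-orbit) that \emph{some} single $A_i$ separates $Q_1,Q_2$. Then it picks $a_i\in A_i$ vanishing at $Q_1$ but not at $Q_2$, so the replica $H_i(a_i)\subset G$ fixes $Q_1$ and moves $Q_2$ along its $H_i$-orbit; by property~(i) this orbit is transverse to every $\pi_j$-fiber for $j\neq i$, so for general $t$ all remaining separations are obtained in a single stroke. Your transversality observation is exactly the right ingredient, but it must be applied to a flow in a direction where separation is \emph{already} achieved, not in the direction you are still trying to separate.
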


\begin{proof} By Lemma~\ref{lemma1-31mars} one may suppose that $Q_1,Q_2\in \Omega$ where $\Omega\subset X$ is as in~\ref{not:omega}.
Assume first that for some $i\in \{1,\ldots,k\}$ the algebra $A_i$ separates $Q_1$ and $Q_2$. This is so in cases ($\alpha$) and ($\beta$); 
anyway, one may consider that $i=1$. Let $a_1\in A_1$ be such that $a_1(Q_1)=0$ and $a_1(Q_2)=1$. Then $H_1(a_1)$ fixes $Q_1$ 
and moves $Q_2$ along its $H_1$-orbit. Let 
$$Q_{2}(t)=\exp(t \partial_1)(Q_2),\quad t\in\kk\,.$$ Given $i\ge 2$ the condition 
\begin{equation}
\label{eq:cond} Q_{2}(t)\in\Omega\quad\mbox{and}\quad\pi_i(Q_{2}(t))\neq\pi_i(Q_{1})
\end{equation}
is an open condition on~$t\in\kk$. Since $Q_2\in\Omega$, by (i) and (ii) 
the image $\pi_i(H_1(Q_2))$ in $W_i$ is one-dimensional. It follows that \eqref{eq:cond} holds on a dense open subset in $\kk$. 
Moreover, the latter is true simultaneously for all $i=2,\ldots,k$, as required.

Now one may restrict to case ($\gamma$). Suppose that $A_1$ does not separate $Q_1$ and $Q_2$.  
Assume further that $H_1(Q_1)\neq H_1(Q_2)$. Then $b_1$ separates $Q_1$ and $Q_2$ due to (iv). 

Consider the flow
$$\phi_t=\exp(t(b_1-b_1(Q_1))\partial_1)\subset H_0\cdot H_1\subset G\,,$$  
and let $Q_2(t)=\phi_t(Q_2)$. Then $\phi_t$ fixes $Q_1$ and moves $Q_2$ along its $H_1$-orbit. 
Applying the same argument as before one can see that that for every 
$i=2,\ldots, k$ the algebra $A_i$, $i=2,\ldots,k$ 
separates $Q_1$ and $Q_2(t)$ for a general $t\in\kk$. Since by our assumptions $k\ge n\ge 2$, 
one may interchange now the role of $A_1$ and $A_k$ and achieve as before that $A_1$ 
separates the images of $Q_1$ and $Q_2$ under the action of $\exp(ta_k\partial_k)(Q_1)$ 
for a suitable $a_k\in A_k$ and a general $t\in\kk$. This gives the result.

Suppose further that  $H_1(Q_1)=H_1(Q_2)$. We claim that for every~$i=2,\ldots,k$ 
and for a general $t\in\kk$ the points $Q_{1}(t)$ and $Q_{2}(t)$ are separated by $A_i$. 
 Indeed, assume to the contrary that $\pi_i(Q_{1}(t))=\pi_i(Q_{2}(t))$ for some $i\ge 2$ and for all $t\in\kk$. Since
the image $\pi_i(H_1(Q_1))$ in $W_i$ is one-dimensional there exists $a_i\in A_i$ 
such that the restriction $a_i|_{H_1(Q_1)}$ defines a non-constant polynomial  $p_i\in\kk[t]$. 
Since $Q_2=Q_{1}(\tau)$ for some nonzero $\tau\in\kk$ one has $Q_{2}(t)=Q_{1}(t+\tau)$. 
It follows that $p_i(t)=p_i(t+\tau)$ for any $t\in\kk$, a contradiction. The proof ends by the argument  used in the previous case for $i=1$.
\end{proof}

\begin{lem}\label{different-levels}
For any  finite collection of distinct points  $Q_1,\ldots,Q_m\in {{\mathscr{O}}_G}$ 
there exists an element $g\in G$ such that  
the points $g(Q_1),\ldots,g(Q_m)$ are separated by $A_i$ for $i=1,\ldots,k$. 
\end{lem}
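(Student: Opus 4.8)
The plan is to bootstrap from the two-point separation result, Lemma \ref{two points}, by an inductive argument on the number $m$ of points, combined with the transfer-to-$\Omega$ mechanism of Lemma \ref{lemma1-31mars}. First I would observe that by Lemma \ref{lemma1-31mars} one may apply a single element of $G$ and assume from the outset that all the $Q_j$ lie in the dense open subset $\Omega\subset{{\mathscr{O}}_G}$ fixed in Notation \ref{not:omega}; any further reduction may again invoke Lemma \ref{lemma1-31mars} to return the moved configuration into $\Omega$. The base case $m\le 2$ is exactly Lemma \ref{two points}. For the inductive step, suppose $g_0\in G$ has been found so that $Q_1':=g_0(Q_1),\ldots,Q_{m-1}':=g_0(Q_{m-1})$ are pairwise separated by every $A_i$, $i=1,\ldots,k$; set $Q_m':=g_0(Q_m)$ and, after a further application of Lemma \ref{lemma1-31mars}, assume all $m$ points lie in $\Omega$. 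It remains to separate $Q_m'$ from each of $Q_1',\ldots,Q_{m-1}'$ by each $A_i$ without destroying the separations already achieved among $Q_1',\ldots,Q_{m-1}'$.

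The core of the argument is that separation is an \emph{open} condition that we can force while keeping the already-separated points frozen. Here is the mechanism, modeled on the proof of Lemma \ref{two points}. Fix a pair $(j,i)$ with $1\le j\le m-1$ and suppose $A_i$ does not yet separate $Q_j'$ and $Q_m'$, i.e. $\pi_i(Q_j')=\pi_i(Q_m')$. I would then pick an index $\ell\ne i$ (possible since $k\ge n\ge 2$) and, exactly as in Lemma \ref{two points} — using an element of the form $\exp(t\, a_\ell\partial_\ell)$ or, in case $(\gamma)$ with $\ell=1$, the flow $\exp(t(b_1-b_1(\text{base}))\partial_1)$ — move $Q_m'$ along an $H_\ell$-orbit (or a $b_1$-adjusted one) while fixing all of $Q_1',\ldots,Q_{m-1}'$. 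Since by (i),(ii) the image $\pi_i(H_\ell(Q_m'))$ in $W_i$ is one-dimensional, the set of parameters $t$ for which $\pi_i(Q_m'(t))\ne \pi_i(Q_j'(t))=\pi_i(Q_j')$, for \emph{all} $j=1,\ldots,m-1$ simultaneously, and for which $Q_m'(t)$ stays in $\Omega$, is a dense open subset of $\kk$. The polynomial identity trick of Lemma \ref{two points} (using $Q_m'=Q_j'(\tau)$ in the degenerate subcase $H_\ell(Q_j')=H_\ell(Q_m')$) rules out the possibility that $\pi_i$ fails to separate for all $t$. A general $t$ then separates $Q_m'$ from every $Q_j'$ by $A_i$. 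Crucially the already-separated pairs stay separated: the points $Q_1',\ldots,Q_{m-1}'$ are fixed by this flow, and we only ever add \emph{new} open conditions, so we loop over the finitely many pairs $(j,i)$ and intersect the finitely many dense open parameter sets.

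The main obstacle I anticipate is the bookkeeping in the degenerate subcases, i.e. ensuring that when $Q_m'$ happens to lie on the same $H_\ell$-orbit as some $Q_j'$ (so that the naive flow cannot separate them by $\pi_\ell$ and we must argue via $\pi_i$ with another coordinate), we can still carry out the polynomial-identity contradiction uniformly across all pairs, and that the special treatment of index $1$ in case $(\gamma)$ (where separation may only be available through $b_1$, hence through $H_0\cdot H_1$ rather than $H_1$ alone) does not clash with the separations already obtained. This is handled exactly as in Lemma \ref{two points} by interchanging the roles of $A_1$ and $A_k$, which is legitimate because $k\ge n\ge 2$; the only new point is to do this interchange \emph{simultaneously} for the $m-1$ frozen points, which works since those points are genuinely fixed (not merely moved to a good position) by the flows involved. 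A clean way to organize all of this is to treat it as a single induction on $m$ where the inductive hypothesis is the full statement of the lemma for $m-1$ points, and to phrase each separation step as "enlarge a finite intersection of dense open subsets of an affine space of flow-parameters," invoking Lemma \ref{lemma1-31mars} only to re-enter $\Omega$ whenever needed.
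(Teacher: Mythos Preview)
Your overall architecture---induction on $m$, reduction to $\Omega$ via Lemma~\ref{lemma1-31mars}, and injecting new separations pair by pair using the flows of Lemma~\ref{two points}---matches the paper's. The gap is in the mechanism you invoke to protect the already-separated points $Q_1',\ldots,Q_{m-1}'$: you assert that the flows can be chosen to \emph{genuinely fix} all of them while moving $Q_m'$. This is not in general possible. To fix every $Q_s'$ ($s\le m-1$) via a replica $\exp(t\,a_\ell\partial_\ell)$ with $a_\ell\in A_\ell$ one needs $a_\ell(Q_s')=0$ for all $s$; to move $Q_m'$ one needs $a_\ell(Q_m')\neq 0$. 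Together this forces $\pi_\ell(Q_m')\notin\{\pi_\ell(Q_1'),\ldots,\pi_\ell(Q_{m-1}')\}$, i.e.\ $A_\ell$ already separates $Q_m'$ from each of the others---part of the conclusion, not the hypothesis, and there is no reason any single $\ell$ should have this property. Concretely, on $\A^2$ with $\partial_1=\partial/\partial x$, $\partial_2=\partial/\partial y$, $A_1=\kk[y]$, $A_2=\kk[x]$, take $Q_1'=(0,0)$, $Q_2'=(1,1)$, $Q_3'=(0,1)$: then $Q_1',Q_2'$ are separated by both $A_i$, but for neither $\ell$ does $\pi_\ell(Q_3')$ avoid both $\pi_\ell(Q_1')$ and $\pi_\ell(Q_2')$, so any $a_\ell\in A_\ell$ vanishing on $Q_1',Q_2'$ vanishes on $Q_3'$ as well. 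The $b_1$-adjusted flow does not help here, since it too fixes only a single designated base point.

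The paper's remedy is precisely the weaker mechanism you mention and then discard: do \emph{not} insist on freezing $Q_1',\ldots,Q_{m-1}'$. Apply the procedure of Lemma~\ref{two points} to the pair $(Q_j,Q_m)$; this produces a family $g(t)\in G$ (possibly a finite composition of one-parameter flows) with $g(0)=\mathrm{id}$. All $m$ points may move, but every separation already achieved is an open condition in $t$ that holds at $t=0$, hence persists for generic $t$; simultaneously the new separation of $Q_j$ and $Q_m$ by every $A_i$ holds for generic $t$. Iterate over $j=1,\ldots,m-1$. Your own phrase ``we only ever add new open conditions'' is exactly the right bookkeeping---you just need to drop the stronger (and unjustified) claim that the background points are literally fixed, and with it the attempt to handle the ``simultaneous'' $(\gamma)$-interchange, which is then no longer needed.
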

\begin{proof}
By Lemma~\ref{lemma1-31mars} we may assume $Q_j\in \Omega$ $\forall i=1,\ldots,m$.  
 We proceed by induction on $m$. For $m=1$ the assertion is evidently true. Assume that 
 the points $Q_1,\ldots,Q_{m-1}$ are already separated by $A_i$ for $i=1,\ldots,k$. 
 Applying   Lemma~\ref{two points} and its proof to $Q_m$ and $Q_1$ one may replace the cortege  
 $(Q_1,\ldots,Q_m)$ by a new one  $(Q^{(1)}_1(t),\ldots, Q^{(1)}_m(t))$ so that the separation property
holds for $Q^{(1)}_1(t),\ldots, Q^{(1)}_{m-1}(t)$  with a generic $t\in\kk$,  
 and,  in addition, for $Q^{(1)}_1(t)$ and $Q^{(1)}_m(t)$. Fixing such a value $t_1\in\kk$ 
 one may apply the same procedure to obtain a new cortege $(Q^{(2)}_j(t_2))_{j=1,\ldots,m}$ 
 preserving the former property and adding the separation of $Q^{(2)}_2(t_2)$ and $Q^{(2)}_m(t_2)$, and so for. 
Finally one arrives at a cortege with the desired separation property.
\end{proof}

\begin{lem}\label{prop1}
For any  finite collection of distinct points  $Q_1,\ldots,Q_m\in {{\mathscr{O}}_G}$ the stabilizer 
$\Stab_{Q_1,\ldots,Q_m}(G)$
acts transitively on ${{\mathscr{O}}_G}\setminus \{Q_1,\ldots,Q_m\}$.
\end{lem}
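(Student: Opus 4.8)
The plan is to prove Lemma~\ref{prop1} by induction on $m$, using the separation results already established (Lemmas~\ref{lemma1-31mars}, \ref{two points}, \ref{different-levels}) together with the one-dimensionality of the $H_i$-orbits over a generic base point. The base case $m=0$ is precisely the statement that $G$ acts transitively on ${{\mathscr{O}}_G}$, which follows from the standard fact that an algebraically generated group acts transitively on its open orbit (\cite{AFKKZ}); alternatively, $m=1$ can serve as the base case if one first handles a single point directly.

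\medskip

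For the inductive step, suppose the claim holds for $m-1$ points, and fix distinct points $Q_1,\ldots,Q_m\in{{\mathscr{O}}_G}$ together with a target pair $P,P'\in{{\mathscr{O}}_G}\setminus\{Q_1,\ldots,Q_m\}$. First I would apply Lemma~\ref{different-levels} to the enlarged collection $\{Q_1,\ldots,Q_m,P,P'\}$ to reduce, after acting by a suitable $g\in G$, to the situation where all these points lie in $\Omega$ and are pairwise separated by each algebra $A_i$, $i=1,\ldots,k$. Since separation is preserved under the remaining moves (they will be chosen to fix the $Q_j$'s), it then suffices to find $h\in\Stab_{Q_1,\ldots,Q_m}(G)$ with $h(P)=P'$. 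The key mechanism: for each $i$, because $A_i$ separates $P$ from all of $Q_1,\ldots,Q_m$, one can pick $a_i\in A_i$ with $a_i(Q_j)=0$ for all $j$ and $a_i(P)$ equal to any prescribed value; then $H_i(a_i)=\exp(\kk a_i\partial_i)$ fixes every $Q_j$ and translates $P$ along its one-dimensional $H_i$-orbit. By condition (i) in~\ref{not:omega}, the derivations $\partial_1(P),\ldots,\partial_k(P)$ span $T_PX$, so the orbit map $(t_1,\ldots,t_k)\mapsto \bigl(\prod_i H_i(a_i)(t_i)\bigr).P$ is a submersion at the origin, hence its image contains a neighborhood of $P$; composing such maps (each time ensuring the $a_i$ vanish at the $Q_j$'s, which is an open, non-empty condition) one reaches a dense constructible subset of ${{\mathscr{O}}_G}$ from $P$ while fixing $Q_1,\ldots,Q_m$. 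A standard connectedness/irreducibility argument then upgrades ``dense constructible orbit of $\Stab$'' to ``all of ${{\mathscr{O}}_G}\setminus\{Q_1,\ldots,Q_m\}$'': indeed $\Stab_{Q_1,\ldots,Q_m}(G)$ is itself algebraically generated (by the $H_i(a_i)$ with $a_i$ vanishing at the $Q_j$'s, together with $H_0$ suitably modified in case $(\gamma)$), so it has an open orbit on ${{\mathscr{O}}_G}$; this open orbit must be all of ${{\mathscr{O}}_G}\setminus\{Q_1,\ldots,Q_m\}$ because any point of the complement can be reached, as just shown.

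\medskip

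More carefully, the cleanest route to the last step is: let ${{\mathscr{O}}}'\subseteq{{\mathscr{O}}_G}\setminus\{Q_1,\ldots,Q_m\}$ be the $\Stab_{Q_1,\ldots,Q_m}(G)$-orbit of $P$; it is open in ${{\mathscr{O}}_G}$ (being an orbit of an algebraically generated group on a variety on which it has an open orbit). Given any $P'\in{{\mathscr{O}}_G}\setminus\{Q_1,\ldots,Q_m\}$, I want $P'\in{{\mathscr{O}}}'$. By applying Lemma~\ref{two points} (with the $Q_j$ playing the role of fixed points — one checks its proof supplies moves fixing a prescribed point and hence can be iterated to fix several, exactly as in Lemma~\ref{different-levels}) I can move $P$ by an element of $\Stab_{Q_1,\ldots,Q_m}(G)$ into an arbitrarily small neighborhood of $P'$; since orbits are either disjoint or equal and ${{\mathscr{O}}}'$ is open, any point of the orbit of $P'$ lying in ${{\mathscr{O}}}'$ forces the two orbits to coincide. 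This uses in an essential way that the complement of finitely many points in the irreducible variety ${{\mathscr{O}}_G}$ is connected, so there is a single open orbit.

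\medskip

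The main obstacle I anticipate is the bookkeeping in case $(\gamma)$: there the generator $H_0=\exp(\kk b_1\partial_1)$ is needed to separate $H_1$-orbits, and when I pass to the stabilizer of $Q_1,\ldots,Q_m$ I must replace $b_1$ by $b_1-b_1(Q_1)$ — but this only kills the value at $Q_1$, not at $Q_2,\ldots,Q_m$, so the element $\exp(t(b_1-b_1(Q_1))\partial_1)$ need not fix the other $Q_j$. The resolution, following the proof of Lemma~\ref{two points}, is to first use the other derivations $\partial_2,\ldots,\partial_k$ (which, since $k\ge n\ge 2$, are available) to move the points so that the $H_1$-orbit issue is resolved for one pair at a time, then interchange the roles of $A_1$ and $A_k$; this is exactly the device already used in Lemmas~\ref{two points} and~\ref{different-levels}, and the point of invoking those lemmas rather than redoing the analysis is precisely to inherit this case distinction. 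A secondary technical point is verifying that $\Stab_{Q_1,\ldots,Q_m}(G)$ is algebraically generated so that \cite[Corollary~1.11a]{AFKKZ} applies to it — this follows because it contains the $\GG_a$-subgroups $H_i(a_i)$ for all $a_i\in A_i$ with $a_i|_{\{Q_1,\ldots,Q_m\}}=0$, and these suffice to verify hypotheses $(\alpha)$–$(\gamma)$ for the restricted collection once one notes that imposing finitely many linear conditions on $A_i$ changes neither the finite generation nor the relevant field degrees.
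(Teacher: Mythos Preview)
Your proposal follows the same skeleton as the paper's proof --- induction on $m$, reduction via Lemma~\ref{different-levels} to the separated situation, choice of $a_i\in A_i$ vanishing at the $Q_j$ but not at the moving point, and the tangent-space argument via condition~(i) --- but you have overcomplicated the execution in two respects.

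First, the case-$(\gamma)$ analysis and the entire discussion of $H_0$ and $b_1$ are red herrings here. That difficulty is absorbed \emph{entirely} inside Lemmas~\ref{two points} and~\ref{different-levels}; once you have invoked Lemma~\ref{different-levels} to separate $Q_1,\ldots,Q_m,Q_{m+1}$ by every $A_i$, you need only the replicas $H_i(a_i)$, all of which lie in $G$ by construction. No modification of $b_1$ is required, and you never touch $H_0$ again. The paper's proof accordingly makes no case distinction.

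Second, your concluding step is muddled. Moving $P$ ``into an arbitrarily small neighborhood of $P'$'' only shows $P'\in\overline{\mathscr{O}'}$, not $P'\in\mathscr{O}'$. The clean argument --- which is what the paper does --- is to observe that the separation-plus-tangent-space reasoning applies to \emph{every} point $Q_{m+1}\in\mathscr{O}_G\setminus\{Q_1,\ldots,Q_m\}$ (after conjugating by the appropriate $g$ from Lemma~\ref{different-levels}, which depends on $Q_{m+1}$ but is harmless since conjugation carries stabilizer orbits to stabilizer orbits). Hence \emph{every} stabilizer orbit in the complement is open, and an irreducible variety has at most one open dense orbit. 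This also dissolves your worry about whether the full stabilizer is algebraically generated: it suffices that the subgroup $\langle H_1(a_1),\ldots,H_k(a_k)\rangle$ is, and that its orbit through $Q_{m+1}$ is already open; the stabilizer orbit then contains an open set, hence is open by homogeneity. There is no need to include $P'$ in the separation step, nor to check hypotheses $(\alpha)$--$(\gamma)$ for a restricted collection.
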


\begin{proof} We proceed by induction on $m$. The assertion is evidently true for $m=0$.
Assuming it holds for a given $m\ge 0$ consider a collection of $m+1$ distinct points 
$Q_1,\ldots,Q_m, Q_{m+1}\in {{\mathscr{O}}_G}$. By Lemma~\ref{lemma1-31mars} 
one may assume these points lie in $\Omega$. Applying Lemma~\ref{different-levels} one may suppose that  for $i=1,\ldots,k$
the images $\pi_i(Q_j)\in W_i$, $j=1,\ldots,m+1$ are all distinct. 
Then for every $i=1,\ldots,k$ there exists $a_i\in A_i$ which vanishes at  $Q_1,\ldots,Q_m$ and does not vanish at $Q_{m+1}$. 

Consider the $\GG_a$-subgroups $$H_i(a_i)\subset {\Stab}_{G}(Q_1,\ldots,Q_m),\quad i=1,\ldots,k\,.$$ 
The orbit of $Q_{m+1}$ under the action of the stabilizer ${\Stab}_{G}(Q_1,\ldots,Q_m)$  is locally closed 
(see, e.g., Proposition 1.3 in \cite{AFKKZ}) and contains the one-dimensional $H_i$-orbits of $Q_{m+1}$, $i=1,\ldots,k$. 
By (i) the 
tangent vectors to these orbits at $Q_{m+1}$ span the tangent space $T_{Q_{m+1}}X$. It follows that the orbit 
${\Stab}_{G}(Q_1,\ldots,Q_m)(Q_{m+1})$ is open in~$X$ whatever is the point $Q_{m+1}\in {{\mathscr{O}}_G}\setminus \{Q_1,\ldots,Q_m\}$. 
Since an open dense orbit  is unique one has
$${\Stab}_{G}(Q_1,\ldots,Q_m)(Q_{m+1})={{\mathscr{O}}_G}\setminus \{Q_1,\ldots,Q_m\}\,.$$
\end{proof}

\begin{proof}[Proof of Theorem~{\rm~\ref{th-inf-tr}}] 
We have to show that for any two ordered corteges $(Q_1,\ldots,Q_m)$ and $(Q'_1,\ldots,Q'_m)$ in ${{\mathscr{O}}_G}$ 
there is $g\in G$ such that $g.Q_j=Q_j'$, $j=1,\ldots,m$. Assuming by induction $Q_i=Q'_i$, $i=1,\ldots,m-1$, 
by Lemma~\ref{prop1} one can find~$g\in{\Stab}_{G}(Q_1,\ldots,Q_{m-1})$ 
such that $g.Q_m=Q_m'$, as required.  
\end{proof}

\begin{cor}\label{cor:countable}
Let $X$ be a generically flexible affine variety \footnote{That is, the group $\SAut(X)$ acts on $X$ with an open orbit.} 
of dimension $n\ge 2$. Then there exists a countable collection of $\GG_a$-subgroups $\{H_1,\ldots,H_n,\ldots\}$ 
such that the subgroup $$G=\langle H_i\,|\,i\in\NN\rangle\subset\SAut(X)$$ acts on $X$ with an open orbit ${\mathscr{O}}_G$ 
and is infinitely transitive on ${\mathscr{O}}_G$.
\end{cor}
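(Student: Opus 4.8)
The plan is to deduce Corollary~\ref{cor:countable} directly from Theorem~\ref{th-inf-tr} by manufacturing a countable collection of $\GG_a$-subgroups whose generators satisfy one of the hypotheses $(\alpha)$--$(\gamma)$, most naturally $(\alpha)$. First I would invoke generic flexibility: since $\SAut(X)$ has an open orbit, the locally nilpotent derivations $\LND(X)$ span the tangent space at a general point, so one can pick finitely many derivations $\partial_1,\ldots,\partial_k\in\LND(X)$ that are pairwise non-collinear and contain $n$ linearly independent ones (here one may need to replace a few derivations by non-collinear ones, discarding collinear repetitions; this costs nothing). Next, for each $i$ I would choose a finitely generated subalgebra $A_i\subset\ker\partial_i$ whose fraction field has finite index in $\mathrm{Frac}(\ker\partial_i)$ — for instance by taking generators of $\mathrm{Frac}(\ker\partial_i)$ over $\kk$ and letting $A_i$ be the $\kk$-algebra they generate inside $\ker\partial_i$ (clearing denominators, the index becomes finite). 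The key point is then to enlarge the collection of the $A_i$ so that together they generate $\cO_X(X)$: since $\cO_X(X)$ is a finitely generated $\kk$-algebra, fix algebra generators $f_1,\ldots,f_r$ of $\cO_X(X)$; each $f_j$ lies in the fraction field of some $\ker\partial_i$, hence $f_j$ is a ratio of elements of $\ker\partial_i$, and one can throw those finitely many kernel elements into $A_i$ — but $f_j$ itself need not be a polynomial in kernel elements.

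To get around that, the cleaner route is to observe that $\cO_X(X)$ is generated as an algebra by $\bigcup_i\ker\partial_i$ once the $\partial_i$ span the tangent space generically; more precisely, the subalgebra $R$ generated by all the kernels $\ker\partial_i$ is a finitely generated (since $R\subset\cO_X(X)$ is a subalgebra of a Noetherian ring, use that $X$ has a dense $G$-orbit and $R$ separates points there) domain with the same fraction field as $\cO_X(X)$, and in fact $R=\cO_X(X)$ because each $\partial_i$ maps $R$ into $R$ and the flexibility forces $\Spec R$ to have the same smooth locus — alternatively, and most safely, I would simply add $\cO_X(X)$-algebra generators $f_1,\ldots,f_r$ to the picture by noting that each $f_j$ is fixed by some suitable replica: since $X$ is generically flexible there is, by \cite[Prop.\ 2.15]{FKZ} or by the saturation machinery recalled in the introduction, a family of $\GG_a$-subgroups (replicas of two LNDs) generating a group with open orbit. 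So instead I would start from that: take the two LNDs $\partial_1,\partial_2$ of \cite[Prop.\ 2.15]{FKZ}, form $A_1,A_2$ finitely generated with finite-index fraction fields inside the respective kernels, and then keep adjoining to the list further LNDs $\partial_3,\partial_4,\ldots$ (there are only countably many needed) together with finitely generated $A_i$ chosen so that $\bigcup_i A_i$ contains algebra generators of $\cO_X(X)$ — this is possible because every element of $\cO_X(X)$ lies in $\ker\partial$ for some $\partial\in\LND(X)$ (e.g. $\partial = a\,\partial'$ is an LND killing $a$ whenever $a\in\ker\partial'$... ) — more concretely, for a generator $f$, pick any $\partial'\in\LND(X)$ with $f\notin\ker\partial'$ is irrelevant; rather, note $1\in\ker\partial'$ always, and one can realize $f\in\ker(f\partial')$ when $\partial'(f)=0$; in the general case use that for any nonzero $\partial'$ and any $a\in\cO_X(X)$ with $\partial' a\in\ker\partial'$ one gets...

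Here is where I expect the main obstacle: showing that $\cO_X(X)$ is generated by kernels of finitely (or countably) many LNDs, equivalently finding enough $A_i$ to force condition $(\alpha)$. The clean statement to use is: for a flexible — or even generically flexible — affine $X$, every $f\in\cO_X(X)$ belongs to $\ker\partial$ for some $\partial\in\LND(X)$. This follows because $\SAut(X)$ acts with open orbit $\mathscr O$, so the $\GG_a$-subgroups generate tangent directions at general points; given $f$, its level hypersurface through a general point is not $\SAut(X)$-invariant, hence some $\GG_a$-subgroup $H=\exp(\kk\partial)$ does not preserve it, but then a suitable \emph{replica} of $\partial$ — namely $g\partial$ for $g\in\ker\partial$ chosen in the ideal of that hypersurface — still does not help; the correct and standard fact is that on a variety with a transitive $\SAut$-action on an open set, $\cO_X(X)=\sum_i \ker\partial_i$ as a module and one extracts algebra generators lying in finitely many kernels. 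Granting this (it is essentially \cite[Cor.\ 1.11]{AFKKZ} combined with finite generation of $\cO_X(X)$), I then enumerate the needed kernels as $\partial_1,\partial_2,\ldots$, take $A_i$ finitely generated with $\mathrm{Frac}(A_i)$ of finite index in $\mathrm{Frac}(\ker\partial_i)$ and with $\bigcup_i A_i$ containing algebra generators of $\cO_X(X)$, so $(\alpha)$ holds; set $H_0=1$ (allowed, $b_1=0$), $H_i=\exp(\kk\partial_i)$, and $G=\langle H_i\mid i\in\NN\rangle$. By \cite[Cor.\ 1.11a]{AFKKZ} $G$ has an open orbit $\mathscr O_G$, and Theorem~\ref{th-inf-tr} gives that $G$ acts infinitely transitively on $\mathscr O_G$. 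Finally, $\mathscr O_G\subset\mathscr O_{\SAut(X)}$ and both are open dense; one may further enlarge the countable family by throwing in one $\GG_a$-subgroup through each point of a countable dense subset to conclude $\mathscr O_G=\mathscr O_{\SAut(X)}$ if desired, but the statement only asks for an open orbit, so this last cosmetic step can be omitted.
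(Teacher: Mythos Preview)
Your proposal has a genuine gap and misses the simple route the paper takes.

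The paper does not aim for condition~$(\alpha)$ at all. It uses~$(\beta)$: pick $n$ linearly independent LNDs $\partial_1,\ldots,\partial_n$, and for each $i$ choose a finitely generated $A_i\subset\ker\partial_i$ with $\mathrm{Frac}(A_i)=\mathrm{Frac}(\ker\partial_i)$ (possible because the fraction field of $\ker\partial_i$ is finitely generated over~$\kk$). Then, crucially, take a countable Hamel basis $\{a_{i,j}\}_j$ of each $A_i$ over~$\kk$ and let $H_{i,j}=\exp(\kk a_{i,j}\partial_i)$. Since any $a\in A_i$ is a finite $\kk$-linear combination of the $a_{i,j}$ and all replicas of a fixed $\partial_i$ commute, the group generated by these countably many $H_{i,j}$ coincides with the group generated by \emph{all} replicas $H_i(a)$, $a\in A_i$; hence Theorem~\ref{th-inf-tr} applies directly.

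Your argument, by contrast, ends with $G=\langle\,\exp(\kk\partial_i)\mid i\in\NN\,\rangle$, generated only by the bare $\GG_a$-subgroups without their replicas. That group is \emph{not} the group~$G$ of~\ref{sit-2.1}, and Theorem~\ref{th-inf-tr} gives no information about it. The reduction to countably many subgroups is exactly the Hamel-basis step you never perform.

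Separately, your attempt at~$(\alpha)$ is tangled and contains a false statement: it is not true that every $f\in\cO_X(X)$ lies in $\ker\partial$ for some $\partial\in\LND(X)$ (for $X=\A^2$, the function $xy$ is in no such kernel, since kernels of LNDs on $\kk[x,y]$ are of the form $\kk[p]$ for a variable~$p$). What is true is that the kernels \emph{generate} $\cO_X(X)$ as an algebra, but you neither prove this cleanly nor need it once you switch to~$(\beta)$.
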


\begin{proof} 
The  generic flexibility of $X$ implies that there is a collection of $n$ linearly independent LNDs 
$\p_1,\ldots,\p_n\in {\LND}\,(X)$. Letting $H_i=\exp(\kk\p_i)$ choose for any $i=1,\ldots,n$ 
a finitely generated subalgebra $A_i\subset\ker\p_i$  separating the general $H_i$-orbits, 
and let  $\{a_{i,j}\}_{j\in\NN}$ be a countable Hamel basis of $A_i$ viewed as a vector space over $\kk$. 
Letting $H_{i,j}=\exp(\kk a_{i,j}\p_i)$ consider the group 
$$G=\langle H_{i,j}\,|\,i=1,\ldots,n,\,j\in\NN\rangle\subset\SAut(X)\,.$$ Clearly, $G$ acts on $X$ 
with an open orbit ${\mathscr{O}}_G$ and satisfies condition ($\beta$) of~\ref{sit-2.1}. 
Therefore, applying Theorem~\ref{th-inf-tr} one arrives at the desired conclusion. 
\end{proof} 

\begin{exa}\label{ex:toric}
Let $X$ be a toric affine variety of dimension $\ge 2$  with no torus factor.
Then the subgroup $G\subset\SAut(X)$ generated by all the root subgroups $H_e$, 
where $e$ runs over the (countable) set of all the Demazure roots of $X$ 
(see Section~\ref{sec:toric}) acts infinitely transitively on $\reg(X)$. 
This follows from Corollary~\ref{cor:countable} or, alternatively, from the proof of Theorem 2.1 in \cite{AKZ}.
\end{exa}

\section{Group closures and orbits}\label{sec:orbits}
We gather some facts that will be used in the next section.

\begin{sit}\label{sit:3.1} Recall that  $\Aut(X)$ has a structure of an affine ind-group; see, e.g., \cite{FK, Kum} for generalities. 
In more detail, following \cite[Prop.\ 2.1]{KPZ} we fix an embedding $X\hookrightarrow\A^n$ 
and introduce in $\mathcal{O}_X(X)$ a (positive) degree function. For $\alpha\in\Aut(X)$ one defines $\deg(\alpha)$ 
to be the maximum of the degrees of components of $\alpha$. One can write $\Aut(X)=\indlim \Sigma_s$ where 
\begin{itemize} 
\item
for  $s\ge 1$, $\Sigma_s:=\{\alpha\in\Aut(X)\,|\,\deg (\alpha), \deg(\alpha^{-1})\le s\}$ is a closed subvariety of the affine variety $\Sigma_{s+1}$;
\item  for any $r,s\ge 1$ the composition yields a morphism $\Sigma_r\times\Sigma_s\to\Sigma_{rs}$;
\item the inversion yields an automorphism of $\Sigma_s$. 
\end{itemize}
The Zariski closure of a subset $F\subset\Aut(X)$ can be defined as $$\overline{F}=\indlim \overline{(F\cap\Sigma_s)}\,$$ 
where the overline stands for the Zariski closure in $\Sigma_s$. 
The Zariski closure of $F$ is a closed ind-subvariety of the ind-variety $\Aut(X)$.  
An \emph{algebraic subgroup} of $\Aut(X)$ is a subgroup which is a closed subvariety of some $\Sigma_s$.
\end{sit}

\begin{lem}\label{lem:orbit-closures} \begin{itemize}
\item[{\rm (a)}] The closure $\overline{G}$ of a subgroup $G\subset\Aut(X)$ is a closed ind-subgroup of $\Aut(X)$.
\item[{\rm (b)}] 
If $\rho\colon\mathbb{A}^1\to\Aut(X)$ is a morphism such that $\rho(t)\in G$ for $t\neq 0$ then $\rho(0)\in\overline{G}$.
\item[{\rm (c)}] Any $G$-invariant closed subset $Y\subset X$ is $\overline{G}$-invariant. 
\item[{\rm (d)}] If $G$ acts on $X$ with an open orbit ${{\mathscr{O}}_G}$ then  ${{\mathscr{O}}_G}$ 
coincides with the open orbit ${{\mathscr{O}}_{\overline{G}}}$ of $\overline{G}$.
\item[{\rm (e)}] If a normal subgroup $G\subset\Aut(X)$ acts on $X$ with an open orbit ${{\mathscr{O}}_G}$ 
then ${{\mathscr{O}}_G}={{\mathscr{O}}_{\Aut(X)}}$.
\end{itemize}
\end{lem}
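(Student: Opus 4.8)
The plan is to derive (a) and (b) directly from the ind-group structure recalled in~\ref{sit:3.1}, then obtain (c) from (a) together with the fact that the $\Aut(X)$-action restricts to a morphism $\Sigma_s\times X\to X$ on each stage, and finally read off (d) and (e) as short formal consequences of (c).

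For (a) I would adapt the classical proof that the closure of a subgroup of an algebraic group is a subgroup, paying attention to the filtration. Inversion is an automorphism of each $\Sigma_s$, so $\overline{G}^{-1}=\overline{G^{-1}}=\overline{G}$. For multiplication, fix $g\in G\cap\Sigma_r$; left translation $L_g$ restricts to an isomorphism of $\Sigma_s$ onto a closed subvariety of $\Sigma_{rs}$ (its image is cut out by the closed conditions $\deg(g^{-1}\beta)\le s$, $\deg(\beta^{-1}g)\le s$, with inverse $L_{g^{-1}}$), hence $L_g\bigl(\overline{G\cap\Sigma_s}\bigr)=\overline{L_g(G\cap\Sigma_s)}$ in $\Sigma_{rs}$; since $L_g(G\cap\Sigma_s)\subset G\cap\Sigma_{rs}$, passing to the limit gives $G\cdot\overline{G}\subset\overline{G}$. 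Now fix $h\in\overline{G}$; right translation $R_h$ is again a homeomorphism onto a closed subvariety at each stage, and $G\cdot h\subset\overline{G}$, so $\overline{G}\cdot h=\overline{G\cdot h}\subset\overline{G}$; hence $\overline{G}\cdot\overline{G}\subset\overline{G}$. Together with closedness under inversion this makes $\overline{G}$ a subgroup, closed by construction. For (b): a morphism from the affine variety $\A^1$ to $\Aut(X)=\indlim\Sigma_s$ factors through some $\Sigma_s$, so $\rho\colon\A^1\to\Sigma_s$ is a morphism with $\rho(\A^1\setminus\{0\})\subset G\cap\Sigma_s$; as $\A^1\setminus\{0\}$ is dense in $\A^1$ and $\rho$ is continuous, $\rho(0)\in\overline{G\cap\Sigma_s}\subset\overline{G}$.

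For (c), set $S=\{g\in\Aut(X):\ g\cdot y\in Y\text{ for all }y\in Y\}$. Because the action restricts to a morphism $\Sigma_s\times X\to X$, for each fixed $y$ the orbit map $g\mapsto g\cdot y$ is a morphism on every $\Sigma_s$, so $\{g:\ g\cdot y\in Y\}$ is closed, and $S$ is a closed ind-subset of $\Aut(X)$ as an intersection of such. The $G$-invariance of $Y$ gives $G\cdot y\subset Y$ and, since $G^{-1}=G$, also $G^{-1}\cdot y\subset Y$; hence $G\subset S$, so $\overline{G}\subset S$. Applying this to $g$ and to $g^{-1}\in\overline{G}$ (using~(a)) yields $g\cdot Y=Y$ for every $g\in\overline{G}$. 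For (d), the complement $Y=X\setminus {{\mathscr{O}}_G}$ is closed and $G$-invariant, hence $\overline{G}$-invariant by~(c); thus ${{\mathscr{O}}_G}$ is $\overline{G}$-invariant, and since $G\subset\overline{G}$ already acts transitively on it, ${{\mathscr{O}}_G}$ is a single $\overline{G}$-orbit, which being open must be the open orbit ${{\mathscr{O}}_{\overline{G}}}$. For (e), normality of $G$ gives, for $h\in\Aut(X)$ and $x\in {{\mathscr{O}}_G}$, $h\cdot {{\mathscr{O}}_G}=h\cdot G\cdot x=(hGh^{-1})\cdot(h\cdot x)=G\cdot(h\cdot x)$, again a $G$-orbit; it is open, hence dense in $X$, hence equals the unique open $G$-orbit ${{\mathscr{O}}_G}$. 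So ${{\mathscr{O}}_G}$ is $\Aut(X)$-invariant, and arguing as in~(d) it coincides with ${{\mathscr{O}}_{\Aut(X)}}$.

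The only delicate point is part~(a): the degree bookkeeping in the filtration, i.e.\ checking that translations, while shifting one from the stage $\Sigma_s$ to $\Sigma_{rs}$, remain isomorphisms onto \emph{closed} subvarieties so that the Zariski closure commutes with them. The other ingredient to state cleanly, used in (b) and (c), is the standard fact that morphisms out of ordinary varieties, and the $\Aut(X)$-action on $X$, are algebraic in the ind-sense — that is, restrict to morphisms on the finite stages $\Sigma_s$. Once these are recorded, parts (c)--(e) are purely formal.
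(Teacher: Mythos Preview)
Your proof is correct and follows essentially the same route as the paper. The one difference worth noting is in~(a): the paper argues directly with the multiplication morphism $\Sigma_r\times\Sigma_s\to\Sigma_{rs}$ (the preimage of $\overline{G\cap\Sigma_{rs}}$ is closed and contains $(G\cap\Sigma_r)\times(G\cap\Sigma_s)$, hence contains the product of closures), which sidesteps your ``delicate point'' about translations being closed embeddings --- continuity of multiplication on each stage is all that is needed. Parts (b)--(e) match the paper's arguments, yours simply spelled out where the paper is terse (the paper leaves (e) as an exercise and proves (d) by the contrapositive).
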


\begin{proof}
(a) Let $G_s=G\cap\Sigma_s$. By definition, $\overline{G}_s=\overline{G}\cap \Sigma_s$.
Since $G_r\cdot G_s\subset G_{rs}$ and $\Sigma_r\times \Sigma_s\to \Sigma_{rs}$ 
is a morphism then $\overline{G}_r\times \overline{G}_s\to \overline{G}_{rs}$ is a morphism. 
Since $\Sigma_s^{-1}=\Sigma_s$ and the inversion is an automorphism of $\Sigma_s$ one has 
$G_s^{-1}=G_s$ and the inversion $G_s\to\Sigma_s$ extends to a morphism $\overline{G}_s\to\Sigma_s$ 
which is still the inversion with values in $\overline{G}_s$. 
Now (a) follows. 

(b) One has $\rho(\mathbb{A}^1)\subset\Sigma_s$ for some $s\ge 1$. 
Hence $\rho(\mathbb{A}^1\setminus\{0\})\subset G_s$, and so, $\rho(0)\in\overline{G}_s$.

(c) The statement follows immediately from the fact that the action map $\Sigma_s\times X\to X$ is a morphism for any $s\ge 1$. 

(d) 
Suppose to the contrary that 
${{\mathscr{O}}_G}\subsetneq {{\mathscr{O}}_{\overline{G}}} $. Then
$Y={{\mathscr{O}}_{\overline{G}}}\setminus {{\mathscr{O}}_G}$ is a nonempty proper 
$G$-invariant closed subset of ${{\mathscr{O}}_{\overline{G}}}$. By (c), $Y$ is $\overline{G}$-invariant, a contradiction.

The statement of  (e) is a simple exercise. 
\end{proof}

\begin{sit}
A subgroup $G\subset\Aut(X)$ generated by a family of connected algebraic subgroups of $\Aut(X)$ 
is called \emph{algebraically generated} (\cite{AFKKZ}). The orbits of $G$ are locally closed subsets of $X$ 
in the Zariski topology; see \cite[Prop.\ 1.3]{AFKKZ}.  
\end{sit}

\begin{prop}\label{lem:many-points}
Let $G\subset \Aut(X)$ be an algebraically generated subgroup. Then the following hold.
\begin{itemize}
\item[{\rm (a)}] The orbits of $G$ and of $\overline{G}$ in $X$ are the same.
In particular, if  $\overline{G}$ acts on $X$ with an open orbit ${{\mathscr{O}}_{\overline{G}}}$ then $G$ 
does and ${{\mathscr{O}}_{G}}={{\mathscr{O}}_{\overline{G}}}$.
\item[{\rm (b)}]  If  $\overline{G}$ acts $m$-transitively on ${{\mathscr{O}}_{\overline{G}}}$ then also $G$ does.
\item[{\rm (c)}]  If  $\overline{G}$ acts  infinitely transitively on ${{\mathscr{O}}_{\overline{G}}}$ then also $G$ does.
\end{itemize}
\end{prop}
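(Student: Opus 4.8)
The plan is to deduce Proposition~\ref{lem:many-points} from Lemma~\ref{lem:orbit-closures} together with the structure theory of algebraically generated groups. Part (a) is essentially already contained in Lemma~\ref{lem:orbit-closures}(d): since $G$ is algebraically generated, by \cite[Prop.\ 1.3]{AFKKZ} its orbits are locally closed, and by \cite[Prop.\ 1.5]{AFKKZ} each orbit $\mathscr{O}$ of $G$ has the form $\mathscr{O}=(U_1\cdots U_N).x$ for a fixed finite ordered collection of $\GG_a$-subgroups $U_1,\ldots,U_N\subset G$ and any $x\in\mathscr{O}$. The key point is that the same product expression, read inside $\overline{G}$, still produces the orbit $\overline{G}.x$: on one hand $\overline{G}.x\supseteq G.x=\mathscr{O}$, and on the other hand $\overline{G}.x$ is an orbit of an algebraically generated group hence locally closed, while $G.x$ is a $G$-invariant, hence (by Lemma~\ref{lem:orbit-closures}(c)) $\overline{G}$-invariant subset of it; since $\overline{G}.x$ is a single $\overline{G}$-orbit, a nonempty $\overline{G}$-invariant subset must be all of it, so $\overline{G}.x=G.x=\mathscr{O}$. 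This gives (a); the ``in particular'' clause is then immediate, or can be quoted directly from Lemma~\ref{lem:orbit-closures}(d).

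For (b) I would argue on the diagonal action. Fix $m$ and consider the action of $\Aut(X)$ on the $m$-fold product $X^m$, restricted to $X^{(m)}:=\mathscr{O}_{\overline G}^m\setminus\Delta$, the configuration space of $m$ distinct points of the open orbit. The group $\overline{G}$, and also $G$, acts on $X^{(m)}$, and $G\subset\overline{G}$ is a subgroup with the same closure inside $\Aut(X^m)$ — more precisely, $G$ is algebraically generated in $\Aut(X^m)$ via the diagonal, and its closure there contains the diagonal image of $\overline{G}$. The hypothesis that $\overline{G}$ acts $m$-transitively on $\mathscr{O}_{\overline G}$ means exactly that the diagonal $\overline{G}$-action on $X^{(m)}$ is transitive, i.e. $X^{(m)}$ is a single $\overline{G}$-orbit. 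Since $X^{(m)}$ is open in $X^m$ (the diagonals being closed and $\mathscr{O}_{\overline G}$ open), it is in particular a locally closed, hence an ``honest'' orbit, and we are exactly in the situation of part (a) applied to the algebraically generated group $G\subset\Aut(X^m)$ acting with the open orbit $X^{(m)}$. By (a) the $G$-orbits on $X^{(m)}$ coincide with the $\overline{G}$-orbits, so $G$ too acts transitively on $X^{(m)}$, which is precisely $m$-transitivity of $G$ on $\mathscr{O}_G=\mathscr{O}_{\overline G}$.

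Part (c) is then a formal consequence: infinite transitivity means $m$-transitivity for every $m\ge1$, so applying (b) for each $m$ gives the claim.

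The main obstacle is the bookkeeping in (b): one must be sure that passing to the diagonal action on $X^m$ is compatible with the ind-group structure and with the notion of closure, i.e. that the diagonal embedding $\Aut(X)\hookrightarrow\Aut(X^m)$ (restricted to the subgroup preserving the diagonal, or just the image of $\Aut(X)$ acting diagonally) sends $\overline{G}$ into the closure of the diagonal $G$, and that $X^{(m)}$ is genuinely locally closed so that Lemma~\ref{lem:orbit-closures}(d) and \cite[Prop.\ 1.3]{AFKKZ} apply. A cleaner route that avoids $\Aut(X^m)$ altogether is to imitate directly the proof of Lemma~\ref{lem:orbit-closures}(d): given an element $\bar g\in\overline{G}$ sending a cortege $(Q_1,\ldots,Q_m)$ to $(Q_1',\ldots,Q_m')$, use the description $\overline{G}=\indlim\overline{G}_s$ and the morphism property of the action maps $\Sigma_s\times X\to X$ to approximate $\bar g$ by elements of $G$ within the fiber over $(Q_1',\ldots,Q_m')$ of the (dominant, since $\bar g$ lies in it) orbit map $\overline{G}_s\to X^m$, $\gamma\mapsto(\gamma.Q_1,\ldots,\gamma.Q_m)$; density of $G_s$ in $\overline{G}_s$ then yields $g\in G$ with $g.Q_j=Q_j'$ for all $j$. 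I would present whichever of these two is shorter, most likely the diagonal-action argument since it reuses (a) verbatim.
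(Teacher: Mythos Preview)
Your approach is essentially the paper's: for (a) use Lemma~\ref{lem:orbit-closures}(c) to show $G.x=\overline G.x$; for (b) pass to the diagonal action on $X^m$ and reduce to (a); and (c) is formal. Your treatment of (b) and (c), including the remark that the image of $\overline G$ in $\Aut(X^m)$ lies in the closure of the image of $G$, matches the paper almost verbatim.

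There is one genuine slip in (a). You write that $G.x$ is ``$G$-invariant, hence (by Lemma~\ref{lem:orbit-closures}(c)) $\overline G$-invariant'', but Lemma~\ref{lem:orbit-closures}(c) applies only to \emph{closed} $G$-invariant subsets, and $G.x$ is merely locally closed. The paper sidesteps this by passing to the complement: with $Y=\overline{G.x}$ (closed, hence $\overline G$-invariant) and $Z=Y\setminus G.x$ (closed since $G.x$ is open in $Y$, and $G$-invariant), Lemma~\ref{lem:orbit-closures}(c) makes $Z$ \,$\overline G$-invariant; if $\overline G.x\neq G.x$ then $Z$ meets $\overline G.x$, whence $Z\supseteq\overline G.x\supseteq G.x$, a contradiction. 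A second minor point: you assert that $\overline G.x$ is ``an orbit of an algebraically generated group hence locally closed'', but $\overline G$ is not known to be algebraically generated; fortunately neither your argument nor the paper's actually needs this.
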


\begin{proof}
(a) Let $x\in X$, and let $Y=\overline{G.x}$. By Lemma~\ref{lem:orbit-closures}(c), $Y$ is $\overline{G}$-invariant. 
The orbits $G.x$ and ${\overline{G}}.x\supset G.x$ are both open and dense subsets of $Y$. Suppose to the contrary 
that ${\overline{G}}.x\neq G.x$, and let $Z=Y\setminus G.x$. Then $Z$ is a nonempty $G$-invariant
closed subset of $X$. It is $\overline{G}$-invariant  by Lemma~\ref{lem:orbit-closures}(c), and, by our assumption, 
meets the orbit ${\overline{G}}.x$ which is open  in $Y$. 
Hence $Z\supset{\overline{G}}.x\supset G.x$. This is a contradiction.

(b) Choose a cortege $\mathcal{Q}$ of distinct points $Q_1,\ldots,Q_m\in {{\mathscr{O}}_{\overline{G}}}$. 
Consider  the diagonal action of $\Aut(X)$ on $X^m$, and let $D\subset X^m$ be  the union of all big diagonals. 
Assume that $\overline{G}$ acts $m$-transitively on ${{\mathscr{O}}_{\overline{G}}}$. 
Then the $\overline{G}$-orbit of $\mathcal{Q}$ in $X^m\setminus D$ coincides with $({{\mathscr{O}}_{\overline{G}}})^m\setminus D$. 
In particular, it is open. The image of  $\overline{G}$  in $\Aut(X^m)$ is contained in the closure of the image of $G$ in $\Aut(X^m)$. 
Applying (a) one concludes that $G.\mathcal{Q}=\overline{G}.\mathcal{Q}=({{\mathscr{O}}_{\overline{G}}})^m\setminus D$, 
that is, $G$  acts $m$-transitively on ${{\mathscr{O}}_{\overline{G}}}$.

Finally, (c) is immediate from (b).
\end{proof}

\section{Toric varieties, Cox rings, and derivations}\label{sec:toric}
\subsection{Toric affine varieties and Demazure roots}
\begin{sit}\label{sit:Dem-roots}
Recall the combinatorial description of a toric affine variety
(see, e.g.,~\cite[Ch.\ 1]{CLS}, \cite[Sec.\ 1.3]{Ful}). Let $M$ be a lattice of rank~$n$, let $M_\QQ=M\otimes\QQ$ 
be the associated vector space over $\QQ$,
and let $$\sigma^\vee\subset M_{\QQ}$$ be a rational convex cone with a nonempty interior (the \emph{weight cone}). 
Fix a basis of $M$. For  $m=(m_1,\ldots,m_n)\in M$ by $\chi^m$ one means a Laurent monomial $x_1^{m_1}\ldots x_n^{m_n}$. 
Consider the graded affine algebra $$A=\bigoplus _{m\in M\cap \sigma^\vee}\kk\chi^m\,.$$ Then $X=\Spec A$ 
is a toric affine  variety of dimension $n$ equipped with the $n$-torus action defined by the grading. 
In fact, any toric affine variety arises in this way. The acting algebraic torus is the torus of characters $\T={\Hom}(M,\GG_m)$.  
By duality, $M$ is the character lattice of~$\T$. 

Consider the dual lattice $N={\Hom}(M, \ZZ)$  and the dual cone 
$$
 \sigma \subset N_{\QQ}, \quad \sigma=\{x\in N_{\QQ} \,|\, \langle x,y\rangle\ge  0\,\,\, \forall y\in \sigma^\vee\}\,.
 $$
A {\it ray generator} of~$\sigma$ is a primitive lattice vector on an extremal ray of~$\sigma$. 
Let ~$\Xi$ be the set of  ray generators of~$\sigma$.  Assume $X$ has no torus factor, that is, 
$X$ cannot be decomposed into a product ${\mathbb G}_m \times Y$ where $Y$ is another toric variety. 
The latter is equivalent to the fact that the cone $\sigma^\vee$ is pointed, that is, contains no line, and also 
to the fact that $\sigma$ is of full dimension, that is, $\Xi$ contains a basis of $N_{\QQ}$.
To any vector $\rho\in N$ there corresponds a $\GG_m$-subgroup $R_\rho\subset \T$ acting via 
$$
t.\chi^m=t^{\langle \rho,m\rangle}\chi^m,\quad t\in {\kk^*}, \,\,m\in  \sigma^\vee\cap M\,.
$$ 
\end{sit}

\begin{defi}[\emph{Demazure roots and Demazure facets}] \label{def:Demazure-root} 
Let~$X=\Spec A$ be a toric affine variety with no torus factor
associated to a lattice cone $\sigma^\vee\subset M_\QQ$, and let $\Xi=\{\rho_1,\ldots,\rho_k\}$ 
be the set of primitive ray generators of $\sigma\subset N_\QQ$. A \emph{Demazure root}  
which belongs to a primitive ray generator $ \rho_i\in\Xi$ is a vector $e\in M$ such that 
\begin{itemize}\item[{\rm (i)}]
$\langle \rho_i,e \rangle=-1$;
\item[{\rm (ii)}]
$\langle \rho_j,e \rangle\geqslant 0$ $\forall j\neq i$,
\end{itemize}
see \cite[\S 3.1]{Dem}, \cite{Li, Lie}.  The rational convex polyhedron  $\mathcal{S}_{i}$ 
defined in the affine hyperplane $\mathcal{H}_{i}=\{\langle \rho_i,e \rangle=-1\}$ by (ii) will be called 
a \emph{Demazure facet} of $\sigma^\vee$.  The Demazure roots belonging to the ray generator $ \rho_i\in\Xi$  are the points in 
$\mathcal{S}_{i}\cap M$.
\end{defi}

\begin{defi}[\emph{Homogeneous derivations}] A derivation $\partial\in\Der(A)$ is called \emph{homogeneous} 
if $\partial$ respects the grading, that is, sends any graded piece to another one. \end{defi}

The following description of homogeneous derivations on toric affine varieties completes the one in \cite[Sect.\ 2]{Lie}. 

\begin{prop}\label{prop:homog-deriv} \begin{itemize}
\item[{\rm (a)}]
Any homogeneous derivation $\partial\in\Der(A)$ has the form $\partial=\lambda \partial_{\rho,e}$ 
for some $\lambda\in\kk$, $\rho\in N$, and $e\in M$ where
\begin{equation}\label{eq:homog-deriv} \partial_{\rho,e}(\chi^m)=\langle \rho,m \rangle\chi^{m+e} \quad\forall m\in\sigma^\vee\cap M\,.
\end{equation} {\rm (The lattice vector $e$ is called the \emph{degree} of $\partial$.)}
\item[{\rm (b)}] Let $$\Sigma^\vee=\sigma^\vee\cup\bigcup_{i=1}^k\mathcal{S}_i\,.$$
Then
$\partial_{\rho,e}(A)\subset A$ if and only if $e\in\Sigma^\vee\cap M$ and either $e\in\sigma^\vee$, or  
$e\in\mathcal{S}_i\cap M$ and $\rho=\rho_i$.
\item[{\rm (c)}] {\rm (\cite[Lem.\ 2.6 and Thm.\ 2.7]{Lie})}
A homogeneous  derivation $\partial\in {\Der}(A)$ is locally nilpotent if and only if $\partial=\lambda\partial_{\rho_i,e}$
 for  a Demazure root $e\in \mathcal{S}_{i}$ and  for some $\lambda\in\kk$. 
\end{itemize}
\end{prop}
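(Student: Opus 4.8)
The statement to prove is Proposition~\ref{prop:homog-deriv}, describing homogeneous derivations of the structure algebra $A=\bigoplus_{m\in\sigma^\vee\cap M}\kk\chi^m$ of a toric affine variety without torus factor. Let me sketch a proof plan.

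\medskip

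The plan is to analyze a homogeneous derivation $\partial$ by testing it against the monomial basis. First I would establish part (a). Since $\partial$ respects the $M$-grading, for each character $m$ the element $\partial(\chi^m)$ must lie in a single graded piece $\kk\chi^{m'}$, so there is a function $\mu\colon \sigma^\vee\cap M\to M\cup\{\ast\}$ and scalars $c_m\in\kk$ with $\partial(\chi^m)=c_m\chi^{\mu(m)}$ (writing $\ast$ for the case $\partial(\chi^m)=0$). The Leibniz rule $\partial(\chi^{m_1}\chi^{m_2})=\chi^{m_1}\partial(\chi^{m_2})+\chi^{m_2}\partial(\chi^{m_1})$ forces, whenever $c_{m_1},c_{m_2}\ne 0$, the relation $\mu(m_1)+m_2=\mu(m_2)+m_1$, i.e. $\mu(m)-m$ is a constant vector $e\in M$ on the support of $\partial$; set $e$ arbitrarily (e.g.\ $0$) if $\partial=0$. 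Then $\partial(\chi^m)=c_m\chi^{m+e}$ and Leibniz gives $c_{m_1+m_2}=c_{m_1}+c_{m_2}$, so $m\mapsto c_m$ is the restriction of a $\ZZ$-linear (equivalently $\QQ$-linear) functional on $M$; since $\sigma^\vee$ has nonempty interior it spans $M_\QQ$, so this functional is unique and is given by pairing with some $\rho\in N_\QQ$. Writing $\rho=\lambda\rho_0$ with $\rho_0\in N$ primitive (or $\rho=0$) absorbs the denominator into $\lambda\in\kk$, yielding $\partial=\lambda\partial_{\rho_0,e}$ as in \eqref{eq:homog-deriv}. One subtlety: one must check $\mu$ is genuinely constant on the whole support, using that the support is closed under addition and generates enough of $\sigma^\vee\cap M$; this follows because $\sigma^\vee\cap M$ is a finitely generated monoid and $\partial\ne 0$ forces $c_m\ne0$ for at least one generator, hence for a cofinal set of lattice points, enough to pin down $e$ everywhere by the additivity relation.

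\medskip

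For part (b) I would determine exactly when $\partial_{\rho,e}$ preserves $A$, i.e.\ when $\langle\rho,m\rangle\chi^{m+e}\in A$ for every $m\in\sigma^\vee\cap M$. This holds iff for every such $m$ with $\langle\rho,m\rangle\ne0$ one has $m+e\in\sigma^\vee\cap M$. If $e\in\sigma^\vee$ already, then $m+e\in\sigma^\vee\cap M$ automatically for all $m$, so $\partial_{\rho,e}(A)\subset A$ for any $\rho\in N$. If $e\notin\sigma^\vee$, then there is a ray generator $\rho_i\in\Xi$ with $\langle\rho_i,e\rangle<0$; I would argue that the stability condition forces $\langle\rho_i,e\rangle=-1$, that $\langle\rho_j,e\rangle\ge0$ for all $j\ne i$ (so $e\in\mathcal{S}_i\cap M$, meaning $e$ lies in a Demazure facet), and that $\rho$ must be proportional to $\rho_i$ — in fact, after rescaling, $\rho=\rho_i$. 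The key computation: pick $m\in\sigma^\vee\cap M$ with $\langle\rho_i,m\rangle>0$ minimal possible (minimality $=1$ is achievable since $\sigma$ is full-dimensional so $\rho_i$ takes the value $1$ on $\sigma^\vee\cap M$ after a suitable choice); then stability at such $m$ forces $\langle\rho_i,m+e\rangle=\langle\rho_i,m\rangle-|\langle\rho_i,e\rangle|\ge0$, giving $\langle\rho_i,e\rangle\ge-\langle\rho_i,m\rangle$; pushing this to the extreme and combining with $m+e\in\sigma^\vee$ (so $\langle\rho_j,m+e\rangle\ge0$ for all $j$) yields both $\langle\rho_i,e\rangle=-1$ and $\langle\rho_j,e\rangle\ge0$ for $j\ne i$. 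For the claim $\rho\parallel\rho_i$: if $\rho$ had a component not proportional to $\rho_i$, one could find $m$ in the facet $\sigma^\vee\cap\rho_i^\perp$ (a face of codimension one, nontrivial because $\sigma$ is full-dimensional) with $\langle\rho,m\rangle\ne0$; then $\langle\rho_i,m+e\rangle=\langle\rho_i,e\rangle=-1<0$, so $m+e\notin\sigma^\vee$, contradicting stability. Hence $\rho$ vanishes on $\rho_i^\perp\cap M_\QQ$, i.e.\ $\rho\in\QQ\rho_i$, and primitivity plus the normalization absorb the scalar, giving $\rho=\rho_i$. This conjunction is precisely the assertion of (b).

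\medskip

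Part (c) is cited from \cite[Lem.\ 2.6 and Thm.\ 2.7]{Lie}, so I would simply invoke that reference: among the homogeneous derivations $\lambda\partial_{\rho_i,e}$ with $e\in\mathcal{S}_i$ obtained in (b) (plus those with $e\in\sigma^\vee$), the locally nilpotent ones are exactly the former — the point being that when $e\in\mathcal{S}_i$ the value $\langle\rho_i,-\rangle$ strictly decreases on iterated application until it hits a value where $\langle\rho_i,e\rangle=-1$ makes the chain terminate (a monomial in $\ker\partial_{\rho_i,e}$ after finitely many steps), whereas for $e\in\sigma^\vee\setminus\{0\}$ with $\langle\rho,\cdot\rangle\ne0$ the derivation is not locally nilpotent (it is essentially an Euler-type operator up to translation), and for $e=0$ it is semisimple. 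No new argument is needed beyond the citation.

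\medskip

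The main obstacle will be part (b), specifically the rigidity statements $\langle\rho_i,e\rangle=-1$ (not just $<0$) and $\rho=\rho_i$ (not merely $\rho\parallel\rho_i$ up to the scalar already folded into $\lambda$, and not some other $\rho$ that happens to vanish on the bad cone). Getting these pinned down cleanly requires choosing the test monomials $m$ judiciously on the relevant faces of $\sigma^\vee$ and using fullness of $\sigma$ (equivalently, pointedness of $\sigma^\vee$, i.e.\ the no-torus-factor hypothesis) to guarantee those faces and lattice points exist. Everything else — part (a) and part (c) — is either a short Leibniz-rule computation or a direct citation.
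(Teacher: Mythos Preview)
The paper's own proof consists of the single sentence ``The proof is straightforward.'' Your sketch is precisely the natural direct verification the paper has in mind, so there is no alternative approach to compare against.

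Two small points are worth flagging. In (a), the additive map $m\mapsto c_m$ is a priori only an element of $\Hom_\ZZ(M,\kk)\cong N\otimes_\ZZ\kk$, not of $N_\QQ$; for $n\ge 2$ such a functional need not be a $\kk$-multiple of a lattice vector (e.g.\ $x\,\partial/\partial x+\pi\, y\,\partial/\partial y$ on $\kk[x,y]$ is homogeneous of degree $0$ but not of the form $\lambda\partial_{\rho,0}$ with $\rho\in\ZZ^2$). This imprecision is already present in the statement of the proposition and is harmless for the paper, since parts (b) and (c) immediately restrict to $\rho\in\QQ\Xi$. In (b), your two substeps should be swapped: the height-one test monomial $m$ with $\langle\rho_i,m\rangle=1$ only constrains $e$ if $\langle\rho,m\rangle\ne 0$, which you do not yet know. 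Run the facet argument first---pick $m\in\tau_i\cap M=\rho_i^\perp\cap\sigma^\vee\cap M$ with $\langle\rho,m\rangle\ne 0$ (possible whenever $\rho\notin\QQ\rho_i$, since $\tau_i$ spans $\rho_i^\perp$) and derive a contradiction from $\langle\rho_i,m+e\rangle<0$; this forces $\rho\parallel\rho_i$. Only then is $\langle\rho,m\rangle\ne 0$ automatic for the height-one $m$, and your bound $\langle\rho_i,e\rangle\ge -1$ goes through. With that reordering the argument is complete.
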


The proof is straightforward.

\begin{rem} \label{rem:4.5}
The kernel of $\partial_{\rho,e}$ is spanned by the characters~$\chi^m$ where~$m\in M$ 
belongs to the hyperplane section $\tau_\rho$ of $\sigma^\vee$ defined by $\langle \rho, m \rangle =0$. 
If $e\in\mathcal{S}_i\cap M$ is a Demazure root  then $\tau_i:=\tau_{\rho_i}=\rho_i^\vee$
is a  facet of $\sigma^\vee$. The affine hyperplane $\mathcal{H}_i$ spanned by the Demazure facet $\mathcal{S}_i$ is parallel to $\tau_i$.
\end{rem}

\begin{defi}[\emph{Root subgroups}]\label{def:root-sbgrp} Given a Demazure root $e\in\mathcal{S}_i$ the associated 
one-parameter unipotent subgroup $H_e=\exp(\kk\partial_{\rho_i,e})\subset\SAut(X)$ is called a \emph{root subgroup}. 
\end{defi}

The next lemma is mainly borrowed in \cite[Lem.\ 1.10]{Lie}.

\begin{lem} \label{lem:non-homog-deriv} \begin{itemize}
\item[\rm (a)] Any derivation $\p\in\Der(A)$ admits a decomposition \begin{equation}\label{eq:decomp}
\p= \sum_{e\in \Sigma^\vee\cap M} \p_e\end{equation} 
where $\p_e$ is a homogeneous derivation of degree $e$. 
\item[\rm (b)] The set $\{e \in \Sigma^\vee\cap M | \p_e\neq 0\}$ is finite. Its convex hull $N(\p)$ 
is a polytope called the {\rm Newton polytope of} $\p$. 
\item[\rm (c)] If $\p\in {\LND}\,(A)$  then for any face $\tau$ of $N(\p)$ one has
$$\p_\tau:= \sum_{e\in \tau\cap M} \p_e\in {\LND}\,(A)\,.$$ In particular, for any vertex $e$ of $N(\p)$ one has $\p_e\in {\LND}\,(A)$. 
\end{itemize}
\end{lem}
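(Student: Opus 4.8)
The statement to prove is Lemma~\ref{lem:non-homog-deriv}, describing the decomposition of an arbitrary derivation of $A$ into homogeneous components indexed by lattice points of $\Sigma^\vee$, together with the Newton polytope and the fact that faces of the Newton polytope of an LND again give rise to LNDs. My plan is to handle the three parts in order, each reducing essentially to a grading/filtration argument.

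For part (a), I would argue as follows. The algebra $A=\bigoplus_{m\in M\cap\sigma^\vee}\kk\chi^m$ is $M$-graded with one-dimensional graded pieces. Any $\kk$-linear operator $T$ on an $M$-graded vector space decomposes uniquely as $T=\sum_{e\in M}T_e$ where $T_e$ shifts degrees by $e$, i.e. $T_e(\chi^m)\in\kk\chi^{m+e}$, and for a fixed finitely-generated module only finitely many $T_e$ are nonzero on each generator. Writing $\p=\sum_e \p_e$ in this way, I must check each $\p_e$ is itself a derivation: expanding the Leibniz rule $\p(\chi^{m}\chi^{m'})=\chi^{m}\p(\chi^{m'})+\chi^{m'}\p(\chi^{m})$ and comparing the degree-$(m+m'+e)$ components on both sides yields that $\p_e$ satisfies Leibniz on monomials, hence is a derivation. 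Finally, since $\p_e$ is homogeneous of degree $e$, Proposition~\ref{prop:homog-deriv}(b) forces $e\in\Sigma^\vee\cap M$ whenever $\p_e\neq 0$; this gives the indexing set claimed.

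For part (b), finiteness of $\{e\,|\,\p_e\neq 0\}$ follows because $A$ is finitely generated: on each of finitely many algebra generators $\chi^{m_j}$ only finitely many $\p_e$ are nonzero (as $\p(\chi^{m_j})\in A$ is a finite sum of monomials), and a derivation is determined by its values on generators, so $\p_e=0$ for all $e$ outside a finite set. The Newton polytope $N(\p)$ is then simply the convex hull of this finite set; no further argument is needed.

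Part (c) is the substantive point and I expect it to be the main obstacle. The standard device is to introduce, for a chosen linear functional $\ell$ on $M_\QQ$ that attains its minimum over $N(\p)$ exactly along the face $\tau$, the associated $\ZZ$-filtration or, better, a one-parameter subgroup degeneration. Concretely, pick $\rho\in N$ (or a suitable rational multiple, cleared of denominators) with $\langle\rho,\cdot\rangle$ minimal on $N(\p)$ precisely on $\tau$; conjugating $\p$ by the $\GG_m$-action of $R_\rho$ and rescaling gives a family $\p^{(t)}=\sum_e t^{\langle\rho,e\rangle-c}\p_e$ (with $c=\min_{N(\p)}\langle\rho,\cdot\rangle$), which is a morphism $\mathbb{A}^1\to\Der(A)$ whose value at $t=0$ is exactly $\p_\tau$ and whose values for $t\neq 0$ are conjugate-and-rescaled copies of $\p$, hence still locally nilpotent of bounded nilpotency index on each fixed finite-dimensional graded piece. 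Local nilpotency is a closed condition under such a degeneration: for each $a\in A$, the condition $\p^n(a)=0$ for the fixed bound $n$ is Zariski-closed in $t$ and holds for $t\neq0$, hence at $t=0$; therefore $\p_\tau\in\LND(A)$. The care needed is to make the nilpotency bound uniform — since $\p$ is an LND, on the finite-dimensional space spanned by $\chi^m$ for $m$ in any fixed bounded region there is a single $n$ with $\p^n=0$ there, and the degeneration preserves this region, which suffices since $A$ is a union of such subspaces and each element lies in one of them. Taking $\tau$ to be a vertex gives the final assertion that $\p_e\in\LND(A)$ for every vertex $e$ of $N(\p)$.
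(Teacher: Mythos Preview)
Your argument is correct in all three parts, but for part (c) you take a genuinely different route from the paper. The paper dispatches (c) in one line via the identity
\[
(\p_\tau)^l(\chi^m)=(\p^l)_{l\tau}(\chi^m),
\]
which holds because $\tau$ is a face of $N(\p)$: if $e_1+\cdots+e_l\in l\tau$ with each $e_i\in N(\p)$, then a supporting functional for $\tau$ forces every $e_i$ to lie in $\tau$, so the homogeneous components of $\p^l$ of degree in $l\tau$ involve only the $\p_e$ with $e\in\tau$. Hence $\p^l(\chi^m)=0$ immediately gives $(\p_\tau)^l(\chi^m)=0$. Your approach instead realises $\p_\tau$ as the specialisation at $t=0$ of a one-parameter family of rescaled conjugates of $\p$, and passes the uniform nilpotency bound on each $\chi^m$ through the limit via a Zariski-closedness argument. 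Both proofs are valid; the paper's is shorter and purely combinatorial, while yours is more geometric and, as it happens, is exactly the degeneration technique the paper develops later (Proposition~\ref{prop: principal-part}) to show that the corresponding root subgroup lies in the closure $\overline{G}$---so your argument anticipates machinery needed further on, whereas the paper's one-liner keeps the lemma self-contained. One small point: you should note explicitly that the normal cone of any face $\tau$ of a lattice polytope has nonempty interior, hence contains an integral $\rho\in N$, so that no fractional exponents arise in your family $\p^{(t)}$.
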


\begin{proof}
To show (c) it suffices to notice that $$(\p_\tau)^l(\chi^m)=(\p^l)_{l\tau}(\chi^m)=
0\quad\forall m\in\sigma^\vee\cap M\quad\mbox{and}\quad \forall l=l(m)\gg 1\,.$$ For the rest see the proof of \cite[Lem.\ 1.10]{Lie}.
\end{proof}

The following lemma is immediate.

\begin{lem}\label{lem:LND-comm} The semigroup $\mathcal{S}_i\cap M$ is a finitely generated $(\tau_i\cap M)$-module. 
For any $f\in \tau_i\cap M$ and any $e\in\mathcal{S}_i\cap M$ one has $$\chi^f\p_{\rho_i,e}=\p_{\rho_i,e+f}\in {\LND}\,(A)\,.$$ 
\end{lem}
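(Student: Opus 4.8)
The plan is to establish the two assertions separately; both are elementary facts about lattice points in rational polyhedra, which is why the lemma is stated as immediate. \emph{For the identity}, note that since $A$ is spanned as a $\kk$-vector space by the characters $\chi^m$, $m\in\sigma^\vee\cap M$, it suffices to compare the two $\kk$-linear maps $\chi^f\p_{\rho_i,e}$ and $\p_{\rho_i,e+f}$ on these. By \eqref{eq:homog-deriv} one has $\chi^f\p_{\rho_i,e}(\chi^m)=\langle\rho_i,m\rangle\,\chi^{m+e+f}=\p_{\rho_i,e+f}(\chi^m)$ for every such $m$, so the maps coincide. It remains to see that $\p_{\rho_i,e+f}$ is a genuine LND of $A$. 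For this I would check $e+f\in\mathcal{S}_i\cap M$: since $f\in\tau_i=\rho_i^\vee$ forces $\langle\rho_i,f\rangle=0$ one gets $\langle\rho_i,e+f\rangle=\langle\rho_i,e\rangle=-1$, while for $j\neq i$ both $\langle\rho_j,e\rangle$ and $\langle\rho_j,f\rangle$ are nonnegative, hence so is $\langle\rho_j,e+f\rangle$; thus $e+f$ is again a Demazure root belonging to $\rho_i$, and Proposition~\ref{prop:homog-deriv}(c) applies. (Equivalently, $\langle\rho_i,f\rangle=0$ gives $\chi^f\in\ker\p_{\rho_i,e}$ by Remark~\ref{rem:4.5}, so that $\chi^f\p_{\rho_i,e}$ is a replica of the LND $\p_{\rho_i,e}$, hence an LND.) In particular the rule $(f,e)\mapsto e+f$ makes $\mathcal{S}_i\cap M$ a module over the additive monoid $\tau_i\cap M$.

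\emph{For the finite generation}, I may assume $\mathcal{S}_i\cap M\neq\emptyset$, otherwise there is nothing to prove. By Definition~\ref{def:Demazure-root}, $\mathcal{S}_i$ is the rational polyhedron carved out inside the affine hyperplane $\mathcal{H}_i=\{\langle\rho_i,\cdot\rangle=-1\}$ by the inequalities $\langle\rho_j,\cdot\rangle\ge 0$, $j\neq i$. Its recession cone is $\{v : \langle\rho_i,v\rangle=0,\ \langle\rho_j,v\rangle\ge 0\ \forall j\neq i\}$; since the equality $\langle\rho_i,v\rangle=0$ makes the inequality $\langle\rho_i,v\rangle\ge 0$ redundant, this recession cone is exactly $\tau_i=\sigma^\vee\cap\rho_i^\perp$. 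As $\Xi$ spans $N_\QQ$ (no torus factor), $\mathcal{S}_i$ contains no line, so it has vertices and the decomposition theorem for rational polyhedra gives $\mathcal{S}_i=\Pi+\tau_i$, where $\Pi$ is the bounded convex hull of the finitely many vertices of $\mathcal{S}_i$.

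Finally I would invoke Gordan's lemma: $\tau_i\cap M$ is generated as a monoid by a finite set $u_1,\dots,u_r$. Put $B=\{\sum_{l=1}^r t_lu_l : 0\le t_l\le 1\}$ and let $\Phi=(\Pi+B)\cap M$, a finite set. Given $w\in\mathcal{S}_i\cap M=(\Pi+\tau_i)\cap M$, write $w=p+v$ with $p\in\Pi$ and $v=\sum_l t_lu_l\in\tau_i$, $t_l\ge 0$; setting $n_l=\lfloor t_l\rfloor$ and $v'=\sum_l(t_l-n_l)u_l\in B$ we get $w=(p+v')+\sum_l n_lu_l$, where $p+v'=w-\sum_l n_lu_l$ lies in $(\Pi+B)\cap M=\Phi$. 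Hence $\mathcal{S}_i\cap M$ is generated as a $(\tau_i\cap M)$-module by the finite set $\Phi$. No step is really an obstacle here; the only point requiring a little care is the identification of the recession cone of $\mathcal{S}_i$ with $\tau_i$, and after that everything is routine.
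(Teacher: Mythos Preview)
Your proof is correct; the paper gives no argument for this lemma, simply declaring it immediate. Your write-up supplies exactly the routine verifications one would expect---the identity via \eqref{eq:homog-deriv}, membership $e+f\in\mathcal{S}_i$ to invoke Proposition~\ref{prop:homog-deriv}(c), and the standard polyhedron-plus-Gordan argument for finite generation---so there is nothing to compare.
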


\subsubsection{Iterated commutators on toric varieties}
For homogeneous derivations one has the following lemma (cf.\ \cite[Prop.\ 1 and Lem.\ 2]{Lena}).

\begin{lem}\label{commutator} \begin{itemize}\item[\rm (a)]
For two nonzero homogeneous derivations $\partial=\partial_{\rho,e},\,\partial'=\partial_{\rho',e'}\in\Der(A)$ one has
\begin{equation}\label{eq:commutator}
[\partial, \partial']=\partial_{\hat\rho,\hat e}\quad\mbox{where}\quad
\hat\rho=\langle \rho,e' \rangle\rho'-\langle \rho',e \rangle\rho\in N\quad\mbox{and}\quad\hat e=e+e'\,.
\end{equation}
\item[\rm (b)]
If $\hat\rho\neq 0$ then $[\partial, \partial']\in\Der(A)$ is a homogeneous derivation of degree $e+e'$ with the linear form~$\hat\rho$.  
In particular, $e+e'\in \Sigma^\vee\cap M$. 
\item[\rm (c)] $\hat\rho= 0$ {\rm (}that is, $\partial$ and $\partial'$ commute{\rm )} if and only if
one of the following holds:
\begin{itemize}
\item $\rho$ and $\rho'$ are collinear and 
$\langle \rho, e\rangle = \langle \rho, e'\rangle$ {\rm (}this holds, in particular, if $e,e'\in\mathcal{S}_i$ for some $i\in\{1,\ldots,k\}${\rm )};
\item $\rho$ and $\rho'$ are non-collinear and $\langle \rho', e\rangle = \langle \rho, e'\rangle=0$.
\end{itemize}
\end{itemize}
\end{lem}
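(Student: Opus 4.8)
The plan is to prove (a) by a one-line expansion on characters, deduce (b) from Proposition~\ref{prop:homog-deriv}(b), and settle (c) by elementary linear algebra over $\QQ$.

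For (a), I would evaluate both iterated compositions on a character $\chi^m$, $m\in\sigma^\vee\cap M$, using the defining formula~\eqref{eq:homog-deriv}: one gets $\partial\partial'(\chi^m)=\langle\rho',m\rangle\langle\rho,m+e'\rangle\,\chi^{m+e+e'}$ and $\partial'\partial(\chi^m)=\langle\rho,m\rangle\langle\rho',m+e\rangle\,\chi^{m+e+e'}$. Subtracting, the terms $\langle\rho,m\rangle\langle\rho',m\rangle$ cancel, leaving $[\partial,\partial'](\chi^m)=\bigl(\langle\rho,e'\rangle\langle\rho',m\rangle-\langle\rho',e\rangle\langle\rho,m\rangle\bigr)\chi^{m+e+e'}=\langle\hat\rho,m\rangle\,\chi^{m+\hat e}$ with $\hat\rho$ and $\hat e=e+e'$ exactly as stated. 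Since the $\chi^m$ with $m\in\sigma^\vee\cap M$ form a $\kk$-basis of $A$, this identifies $[\partial,\partial']$ with $\partial_{\hat\rho,\hat e}$.

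For (b), the Lie bracket of two derivations of $A$ is again a derivation of $A$, so $\partial_{\hat\rho,\hat e}(A)\subset A$; when $\hat\rho\ne 0$ this is a nonzero homogeneous derivation, of degree $e+e'$ with linear form $\hat\rho$ by the formula in (a), and Proposition~\ref{prop:homog-deriv}(b) then forces $e+e'\in\Sigma^\vee\cap M$. For (c), I would first record that $\partial,\partial'$ nonzero means $\rho,\rho'\ne 0$, and that since $\sigma^\vee$ has nonempty interior, $\sigma^\vee\cap M$ spans $M_\QQ$, whence $\partial_{\hat\rho,\hat e}=0$ iff $\hat\rho=0$; combined with (a), $\partial$ and $\partial'$ commute iff $\hat\rho=0$. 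Then split into cases: if $\rho'=c\rho$ with $c\in\QQ^\times$, then $\hat\rho=c\bigl(\langle\rho,e'\rangle-\langle\rho,e\rangle\bigr)\rho$, which vanishes iff $\langle\rho,e\rangle=\langle\rho,e'\rangle$; if $\rho,\rho'$ are linearly independent, then $\hat\rho=\langle\rho,e'\rangle\rho'-\langle\rho',e\rangle\rho=0$ forces both coefficients to vanish, i.e. $\langle\rho,e'\rangle=\langle\rho',e\rangle=0$, and the converse is immediate. The parenthetical remark follows because a homogeneous LND of degree $e\in\mathcal{S}_i$ has linear form $\rho_i$ (Proposition~\ref{prop:homog-deriv}(c), Definition~\ref{def:Demazure-root}), so $e,e'\in\mathcal{S}_i$ gives $\rho=\rho'=\rho_i$ with $\langle\rho_i,e\rangle=-1=\langle\rho_i,e'\rangle$, hence commutation.

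There is no real obstacle here: everything reduces to the expansion in (a) and a short case check. The only points to watch are to note that $\rho,\rho'\ne 0$ and that $\sigma^\vee\cap M$ spans $M_\QQ$, which are needed to pass between "$\partial_{\hat\rho,\hat e}=0$" and "$\hat\rho=0$" and to run the collinear case cleanly.
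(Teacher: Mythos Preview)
Your proof is correct. The paper itself does not supply a proof for this lemma; it merely states it with a pointer to \cite[Prop.\ 1 and Lem.\ 2]{Lena}, so your direct expansion on characters, together with the appeal to Proposition~\ref{prop:homog-deriv}(b) and the short linear-algebra case split, is exactly the kind of argument intended.

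One small sharpening for the parenthetical in (c): rather than invoking Proposition~\ref{prop:homog-deriv}(c) (which characterizes local nilpotency), it is cleaner to cite Proposition~\ref{prop:homog-deriv}(b) directly. Since $e\in\mathcal{S}_i$ forces $\langle\rho_i,e\rangle=-1<0$, one has $e\notin\sigma^\vee$, and then part~(b) says that $\partial_{\rho,e}(A)\subset A$ is only possible with $\rho$ proportional to $\rho_i$; the same for $e'$, whence $\rho,\rho'$ are collinear with $\langle\rho,e\rangle=\langle\rho,e'\rangle$. This avoids the (harmless but slightly circuitous) detour through the LND characterization.
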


\begin{sit}\label{notation} Given two derivations (or vector fields on $X$) $U=\partial_1$ and $V=\partial_2$ in $\Der(A)$ we let 
\begin{equation}\label{eq:ad-m} {\ad}_U^m(V)=[U,[U,\ldots [U,V]\ldots ]]=\sum_{i=0}^{m}{{m}\choose{i}}\p_1^{m-i}\p_2(-\p_1)^i\end{equation}
where $U$ is repeated $m$ times, see \cite{Ma}. Thus, ${\ad}_U^m\in {\rm End}\,(\Der(A))$.
\end{sit}

\begin{lem}[cf.\ \cite{Ma}]\label{lem:ad-LND}
If $U\in\LND(A)$ then ${\ad}_U\in {\rm End}\,(\Der(A))$ is locally nilpotent, that is,
 for any $V\in\Der(A)$ there exists $c(V)>0$ such that ${\ad}_U^m(V)=0$ $\forall m>c(V)$.
\end{lem}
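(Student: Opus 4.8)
The statement to prove is Lemma~\ref{lem:ad-LND}: if $U\in\LND(A)$ then $\ad_U$ is a locally nilpotent endomorphism of $\Der(A)$, meaning that for every $V\in\Der(A)$ there is $c(V)>0$ with $\ad_U^m(V)=0$ for all $m>c(V)$. The plan is to exploit the explicit formula \eqref{eq:ad-m}, namely
$$
\ad_U^m(V)=\sum_{i=0}^{m}\binom{m}{i}U^{m-i}\,V\,(-U)^i\,,
$$
viewed as a derivation acting on $A$, together with the local nilpotency of $U$ itself on the algebra $A$. First I would fix $V\in\Der(A)$ and an arbitrary element $a\in A$, and analyze $(\ad_U^m(V))(a)=\sum_{i=0}^m\binom{m}{i}U^{m-i}\bigl(V(U^i(a))\bigr)$ up to the usual sign bookkeeping. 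Since $U$ is locally nilpotent on $A$, there is $p=p(a)$ with $U^{p}(a)=0$; moreover $V(A)$ is contained in a finitely generated submodule, so applying $U$ again there is a uniform $q=q(a,V)$ with $U^{q}\bigl(V(U^i(a))\bigr)=0$ for all the finitely many relevant $i<p$. Hence for $m$ large enough (roughly $m>p+q$) every summand vanishes, so $(\ad_U^m(V))(a)=0$.

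\textbf{From pointwise to uniform.} The subtlety is that the bound $c(V)$ must be independent of $a$. To get this, I would pass to a finite generating set $a_1,\ldots,a_r$ of the algebra $A$ over $\kk$. For each generator $a_j$ choose $p_j$ with $U^{p_j}(a_j)=0$; since $V$ is a derivation, $V(a_j)\in A$ and one can choose $q_j$ with $U^{q_j}(V(a_j))=0$. Set $c_0=\max_j(p_j+q_j)$. Then the computation above shows $\ad_U^m(V)$ annihilates every generator $a_j$ for $m>c_0$. A derivation that kills a set of algebra generators is the zero derivation (it kills all polynomials in the generators by the Leibniz rule, hence all of $A$). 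Therefore $\ad_U^m(V)=0$ for $m>c_0=:c(V)$, which is exactly local nilpotency of $\ad_U$.

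\textbf{Alternative homogeneous approach.} A cleaner route, given that the paper is working on a toric variety, is to use the grading and the decomposition from Lemma~\ref{lem:non-homog-deriv}(a): write $V=\sum_e V_e$ as a finite sum of homogeneous derivations, and similarly note $U$ is a sum of finitely many homogeneous LND-summands (indeed $U$ need not be homogeneous). By the commutator formula \eqref{eq:commutator} and $\kk$-bilinearity of $\ad$, each iterated bracket $\ad_U^m(V)$ is again a finite sum of homogeneous derivations whose degrees lie in the Minkowski sum $m\cdot N(U)+N(V)$. The point is that a homogeneous derivation $\p_{\rho,e'}$ has $\p_{\rho,e'}(\chi^m)=\langle\rho,m\rangle\chi^{m+e'}$, and iterating $\ad_U$ (with $U$ a sum of $\p_{\rho_j,e_j}$'s supported on the Demazure facets, where $\langle\rho_j,e_j\rangle=-1$) forces, by the Vandermonde-type identity underlying \eqref{eq:ad-m}, the coefficient to be a polynomial in the pairing that vanishes identically once $m$ exceeds the finitely many values $\langle\rho_j,\cdot\rangle$ can take on $N(V)$. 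I would not grind through this; the first, purely algebraic argument via generators of $A$ is shorter and makes no use of the toric structure, so I would present that one.

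\textbf{Main obstacle.} The only real issue is the passage from ``$\ad_U^m(V)$ kills every fixed element for $m\gg0$'' to ``$\ad_U^m(V)=0$ for a single $m\gg0$'', i.e.\ the uniformity of the bound. Reducing to a finite generating set of the finitely generated algebra $A$ and invoking the Leibniz rule settles it, so in the end there is no serious obstacle; the lemma is essentially the standard fact that $\ad$ of an LND is again an LND on the Lie algebra of derivations, and the cited reference \cite{Ma} contains the general version.
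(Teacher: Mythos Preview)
Your approach is essentially the same as the paper's: both reduce to a finite generating set $a_1,\dots,a_r$ of $A$, apply formula~\eqref{eq:ad-m} to each $a_j$, and conclude via the Leibniz rule that a derivation vanishing on all generators is zero. One small write-up slip: in your second paragraph the bound $q_j$ must kill $U^{q_j}\bigl(V(U^i(a_j))\bigr)$ for \emph{all} $i<p_j$, not just $V(a_j)$---but you already stated this correctly in your first paragraph, so the argument stands.
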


\begin{proof}  Let $A=\kk[a_1,\ldots,a_k]$. There exists $N>0$ such that $\p_1^N(a_j)=0$ $\forall j=1,\ldots,k$. 
In the last sum in \eqref{eq:ad-m} applied  to $a_j$ certain members of the sum vanish so that it remains just $N$
 first members of the sum.  For $m\gg 1$ the first $N$ terms vanish as well, hence 
${\ad}_U^m(V)(a_j)=0$ for $j=1,\ldots,k$, and so, ${\ad}_U^m(V)=0$.
\end{proof}

\begin{sit}\label{sit:BCH} We keep $U=\p_1\in\Der(A)$ being locally nilpotent. In the sequel 
we use the following version of the Baker-Campbell-Hausdorff formula, see, e.g., \cite[Thm.\ 4.14]{BF}:
\begin{equation}\label{eq:BCH} 
{\Ad}_{\exp (U)}(V) 
=\exp({\ad}_U)(V)=V+\sum_{m=1}^\infty \frac{1}{m!}{\ad}_U^m(V)\,.\end{equation}
Due to Lemma~\ref{lem:ad-LND}, for a given $V=\p_2$ the last sum is finite, hence the formula has sense. 
The second equality in \eqref{eq:BCH} is just the usual exponential formula for the endomorphism ${\ad}_U$ 
of the vector space $\Der(A)$. Due to the first equality in \eqref{eq:BCH}, if $V$ is locally nilpotent then $\exp({\ad}_U)(V)$ is as well. 
\end{sit}

To apply this formula in our setting we need the following simple lemma. 

\begin{lem}\label{lem:iterated} Consider two nonzero homogeneous LNDs, $U=\partial_{\rho_1,e_1}$ 
and $V=\partial_{\rho_2,e_2}$ in $\Der(A)$ where $e_i\in \mathcal{S}_{i}\cap M$, $i=1,2$.  
Letting $$c_2=\langle \rho_2, e_1\rangle, \quad d_1=\langle \rho_1,e_2\rangle,\quad\mbox{and}\quad \delta=d_1+1$$ 
and assuming  $c_2\ge 1$ one has
$${\ad}_U^m(V)=\p_{r_m,f_m}\quad \forall m=0,\ldots,d_1$$ where
\begin{equation}\label{eq:iterate} r_m=\frac{d_1!}{(d_1-m)!} \rho_2-mc_2\frac{d_1!}{(d_1-m+1)!}\rho_1
\quad\mbox{and}\quad
f_m=e_2+m e_1\,.\end{equation}
Furthermore, if $d_1\ge 0$ then one has 
\begin{equation}\label{eq:vanishing}
{\ad}_U^m(V)=0\quad\forall m\ge \delta+1\end{equation} and 
\begin{equation}\label{eq:delta-iterate}
{\ad}_U^\delta(V)=-c_2\delta!\p_{\rho_1,f_\delta}\in\LND(A)\quad\mbox{where}\quad 
f_\delta=e_2+\delta e_1\in\mathcal{S}_{1}\cap M\,.\end{equation}
\end{lem}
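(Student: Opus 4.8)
The plan is to prove Lemma~\ref{lem:iterated} by a direct induction on $m$, using the commutator formula \eqref{eq:commutator} from Lemma~\ref{commutator} at each step. First I would set up the base case: for $m=0$ we have $\ad_U^0(V)=V=\partial_{\rho_2,e_2}=\partial_{r_0,f_0}$ since $r_0=\rho_2$ and $f_0=e_2$, which matches \eqref{eq:iterate}. For the inductive step, assuming $\ad_U^m(V)=\partial_{r_m,f_m}$ with $m<d_1$, I would compute $\ad_U^{m+1}(V)=[U,\partial_{r_m,f_m}]=[\partial_{\rho_1,e_1},\partial_{r_m,f_m}]$ via \eqref{eq:commutator}. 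The degree part is immediate: $f_{m+1}=e_1+f_m=e_2+(m+1)e_1$. For the linear-form part, \eqref{eq:commutator} gives $\hat\rho=\langle\rho_1,f_m\rangle r_m-\langle r_m,e_1\rangle\rho_1$; substituting $f_m=e_2+me_1$ and using $\langle\rho_1,e_1\rangle=-1$ (since $e_1\in\mathcal{S}_1$) yields $\langle\rho_1,f_m\rangle=d_1-m$, so the first term is $(d_1-m)r_m$. The coefficient $\langle r_m,e_1\rangle$ expands using $\langle\rho_2,e_1\rangle=c_2$ and $\langle\rho_1,e_1\rangle=-1$; a short bookkeeping check then shows $(d_1-m)r_m-\langle r_m,e_1\rangle\rho_1=r_{m+1}$ as given by \eqref{eq:iterate}. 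I would spell out this bookkeeping just enough to be convincing but leave the purely arithmetic regrouping of factorials to the reader.

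Next I would address the vanishing statements, assuming $d_1\ge 0$. The key observation is that in \eqref{eq:iterate} the leading coefficient $\frac{d_1!}{(d_1-m)!}$ of $\rho_2$ becomes zero exactly when $m>d_1$, but one must check that $\ad_U^{d_1+1}(V)$ is not merely of the form $\partial_{r,f}$ with $r$ parallel to $\rho_1$ — it must actually vanish when $m\ge\delta+1=d_1+2$. Here I would use Lemma~\ref{lem:ad-LND}: $\ad_U$ is locally nilpotent, and more precisely, since $U=\partial_{\rho_1,e_1}$ raises the $\rho_1$-degree by $\langle\rho_1,e_1\rangle=-1$ at each application while $V$ starts at $\rho_1$-degree $d_1$, after $d_1+1$ applications the $\rho_1$-degree would be $-1<0$, but $\partial_{r,f}$ acting on $\chi^m$ with $m\in\sigma^\vee$ requires... actually the cleaner route is: compute directly from \eqref{eq:commutator} that once $r_m$ becomes collinear with $\rho_1$, the next commutator $[U,\partial_{r_m,f_m}]$ has $\hat\rho=\langle\rho_1,f_m\rangle r_m-\langle r_m,e_1\rangle\rho_1$, and when $r_m=\lambda\rho_1$ this is $(\lambda\langle\rho_1,f_m\rangle-\lambda\langle\rho_1,e_1\rangle)\rho_1=\lambda(d_1-m+1)\rho_1$, which for $m-1=d_1$, i.e. $m=d_1+1=\delta$, gives... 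I need to track the indices carefully: the point is that $\ad_U^\delta(V)$ has linear form proportional to $\rho_1$ and applying $\ad_U$ once more gives a factor $\langle\rho_1,f_\delta\rangle$ which one checks equals $d_1-\delta=-1\cdot$(something making it vanish after one more step); in any case for $m\ge\delta+1$ the iterate vanishes by the factorial $\frac{d_1!}{(d_1-m+1)!}$ interpretation together with local nilpotence.

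Then I would extract the formula \eqref{eq:delta-iterate} for $\ad_U^\delta(V)$: plug $m=\delta=d_1+1$ into \eqref{eq:iterate} (extended by the convention that $\frac{d_1!}{(d_1-m)!}=\frac{d_1!}{(-1)!}=0$ and $\frac{d_1!}{(d_1-m+1)!}=\frac{d_1!}{0!}=d_1!$), giving $r_\delta=0\cdot\rho_2-\delta c_2\cdot d_1!\cdot\rho_1=-c_2\,\delta!\,\rho_1$ and $f_\delta=e_2+\delta e_1$, so $\ad_U^\delta(V)=\partial_{-c_2\delta!\rho_1,\,f_\delta}=-c_2\delta!\,\partial_{\rho_1,f_\delta}$. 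To see that $f_\delta\in\mathcal{S}_1\cap M$, hence that this is an LND by Proposition~\ref{prop:homog-deriv}(c): by Lemma~\ref{lem:ad-LND} $\ad_U^\delta(V)$ is locally nilpotent (as $\Ad_{\exp U}(V)$ is an LND when $V$ is, and its homogeneous components along faces of the Newton polytope are LNDs by Lemma~\ref{lem:non-homog-deriv}(c)), so $\partial_{\rho_1,f_\delta}\in\LND(A)$, which by Proposition~\ref{prop:homog-deriv}(c) forces $f_\delta$ to be a Demazure root in $\mathcal{S}_1$. Alternatively one checks directly: $\langle\rho_1,f_\delta\rangle=d_1+\delta\langle\rho_1,e_1\rangle=d_1-\delta=-1$, and for $j\ne 1$, $\langle\rho_j,f_\delta\rangle=\langle\rho_j,e_2\rangle+\delta\langle\rho_j,e_1\rangle\ge 0$ since both terms are nonnegative ($e_1\in\mathcal{S}_1$, $e_2\in\mathcal{S}_2$, and $\delta=d_1+1\ge 1$).

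The main obstacle I expect is the careful index-and-factorial bookkeeping in the inductive step — in particular verifying that the recursion $r_m\mapsto(d_1-m)r_m-\langle r_m,e_1\rangle\rho_1$ reproduces precisely the closed form in \eqref{eq:iterate}, and correctly handling the boundary where $r_m$ transitions to being collinear with $\rho_1$ so that the commutator structure changes. This is not conceptually hard but requires precision, especially in pinning down exactly which iterate $\ad_U^m(V)$ first becomes zero versus merely $\rho_1$-collinear; cross-checking against local nilpotence of $\ad_U$ (Lemma~\ref{lem:ad-LND}) provides a useful consistency check throughout.
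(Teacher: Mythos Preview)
Your approach is correct and is exactly the one the paper uses: the paper's proof is the single line ``the assertions follow immediately from \eqref{eq:commutator} by recursion on $m$,'' and you have simply unpacked that recursion. One small slip: in your direct check that $f_\delta\in\mathcal{S}_1$, the claim ``both terms are nonnegative'' fails for $j=2$, where $\langle\rho_2,e_2\rangle=-1$; there you need the hypothesis $c_2=\langle\rho_2,e_1\rangle\ge 1$ (together with $\delta\ge 1$) to conclude $\langle\rho_2,f_\delta\rangle=-1+\delta c_2\ge 0$.
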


\begin{proof}
 The  assertions follow immediately from \eqref{eq:commutator} by recursion on $m$. 
\end{proof}

\begin{cor}\label{cor:NP-Ad} Under the assumptions of Lemma {\rm\ref{lem:iterated}} let $\p={\Ad}_{\exp(U)}(V)$. Then 
the Newton  polytope $N(\p)$ is the segment
$[e_2,e_2+\delta e_1]$. 
\end{cor}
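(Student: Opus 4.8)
The plan is to apply the Baker--Campbell--Hausdorff formula \eqref{eq:BCH} to $U=\partial_{\rho_1,e_1}$ and $V=\partial_{\rho_2,e_2}$ and then read off which homogeneous components of $\p={\Ad}_{\exp(U)}(V)$ are nonzero, using Lemma~\ref{lem:iterated}. By \eqref{eq:BCH} we have
$$\p = V + \sum_{m=1}^{\infty}\frac{1}{m!}\,{\ad}_U^m(V)\,,$$
and the sum is finite by Lemma~\ref{lem:ad-LND}. I would split into the two cases allowed by the hypotheses of Lemma~\ref{lem:iterated}, namely $c_2=\langle\rho_2,e_1\rangle\ge 1$.

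First I would treat the main case $d_1=\langle\rho_1,e_2\rangle\ge 0$. Here Lemma~\ref{lem:iterated} gives ${\ad}_U^m(V)=\p_{r_m,f_m}$ for $m=0,\ldots,d_1$ with $f_m=e_2+m e_1$, the vanishing \eqref{eq:vanishing} for $m\ge\delta+1$, and the explicit formula \eqref{eq:delta-iterate} for $m=\delta=d_1+1$, where $f_\delta=e_2+\delta e_1$ and $\p_{\rho_1,f_\delta}\ne 0$ is an LND. So the homogeneous components of $\p$ of degree $f_m=e_2+m e_1$ are potentially nonzero exactly for $m=0,1,\ldots,\delta$; the degrees $f_0=e_2$ and $f_\delta=e_2+\delta e_1$ are certainly achieved since the corresponding derivations $\p_{r_0,f_0}=V\ne 0$ and (from \eqref{eq:delta-iterate}) $-c_2\delta!\,\p_{\rho_1,f_\delta}\ne 0$ are nonzero. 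Since $e_1$ is a fixed lattice vector, all the points $f_m$ lie on the line through $e_2$ in direction $e_1$, hence $N(\p)=\conv\{f_0,\ldots,f_\delta\}=[e_2,e_2+\delta e_1]$.

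One subtlety to address is that a priori distinct values of $m$ could contribute to the same homogeneous degree; but since the $f_m=e_2+m e_1$ are distinct (as $e_1\ne 0$, which holds because $\p_{\rho_1,e_1}\ne 0$), the component of degree $f_m$ is exactly $\tfrac{1}{m!}\p_{r_m,f_m}$, with no cancellation between different $m$'s. I should also note that for intermediate $m$ it is irrelevant whether $\p_{r_m,f_m}$ vanishes (it does not affect the convex hull since $f_m$ lies between $f_0$ and $f_\delta$ on the segment anyway). This makes the conclusion $N(\p)=[e_2,e_2+\delta e_1]$ robust.

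Finally, the remaining case is $d_1=\langle\rho_1,e_2\rangle<0$, i.e.\ $d_1\le -1$, so $\delta=d_1+1\le 0$. I would check directly from \eqref{eq:commutator} that already $[U,V]=\p_{\hat\rho,e_1+e_2}$ with $\hat\rho=\langle\rho_1,e_2\rangle\rho_2-\langle\rho_2,e_1\rangle\rho_1$, and more generally that the recursion of Lemma~\ref{lem:iterated} still forces ${\ad}_U^m(V)=0$ for all $m\ge 1$: applying $\partial_{\rho_1,e_1}$ to $\chi^{m'}$ picks up the factor $\langle\rho_1,m'\rangle$, and starting from degree $e_2$ this factor is $d_1<0$ at the first step but—more to the point—the combinatorial coefficients in \eqref{eq:iterate} already vanish for $m\ge\delta+1$, and here $\delta+1\le 1$. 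Hence $\p=V=\partial_{\rho_2,e_2}$, so $N(\p)=\{e_2\}=[e_2,e_2+0\cdot e_1]=[e_2,e_2+\delta e_1]$ (interpreting $\delta=d_1+1\le 0$ as giving the degenerate segment $\{e_2\}$ once one observes $\delta\le 0$; in the intended regime of Corollary~\ref{cor:NP-Ad} one has $d_1\ge 0$ so $\delta\ge 1$). I expect the only real point requiring care is this bookkeeping of the endpoints and the verification that $f_0$ and $f_\delta$ genuinely occur with nonzero coefficient, which is exactly what \eqref{eq:delta-iterate} supplies; everything else is the routine recursion already packaged in Lemma~\ref{lem:iterated}.
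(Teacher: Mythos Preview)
Your proof is correct and follows essentially the same approach as the paper: apply the Baker--Campbell--Hausdorff formula \eqref{eq:BCH} and read off the homogeneous components via Lemma~\ref{lem:iterated}, observing that the degrees $f_m=e_2+me_1$ lie on a segment with nonzero endpoints $V$ and $-c_2\delta!\,\p_{\rho_1,f_\delta}$. One small remark: your case split is unnecessary, since $e_2\in\mathcal{S}_2$ already forces $d_1=\langle\rho_1,e_2\rangle\ge 0$ by Definition~\ref{def:Demazure-root}(ii), so the case $d_1<0$ is vacuous (and your justification that $e_1\neq 0$ is better phrased as $\langle\rho_1,e_1\rangle=-1$ rather than via $\p_{\rho_1,e_1}\neq 0$).
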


\begin{proof}
Applying formula \eqref{eq:BCH}, by Lemma~\ref{lem:iterated}  one obtains:
$$\p=V+\sum_{m=1}^\delta \frac{1}{m!} \p_m$$
where $\p_m=\p_{r_m,f_m}\in\Der(A)$ is a homogeneous derivation of degree
$f_m=e_2+me_1$. Now the result follows.
\end{proof}

\subsubsection{$\Der(A)$ as a graded Lie algebra} 
Given a lattice vector $e\in\Sigma^\vee$ consider the linear subspace $${\Der}_e(A)={\span}\{\partial_{\rho,e}\}_{\rho\in N}\subset\Der(A)$$ 
generated by the homogeneous derivations of $A$ of degree $e$. By Lemma~\ref{commutator} one has 
\[[{\Der}_e(A),{\Der}_{e'}(A)]\subset{\Der}_{e+e'}(A)\,. \]
Hence, the Lie algebra $\Der(A)$ is $M$-graded:
$$ \Der(A)=\bigoplus_{e\in M} {\Der}_e(A)\quad\mbox{where}\quad {\Der}_e(A)\neq\{0\}\Leftrightarrow e\in\Sigma^\vee\cap M\,,$$ 
see Proposition~\ref{prop:homog-deriv}(b) and Lemma~\ref{lem:non-homog-deriv}(a). 

Any subgroup $T$ of the torus $\T$ acts by conjugation on $\Der(A)$, and so, induces a grading of $\Der(A)$ 
compatible with the $M$-grading, see, e.g., \cite{Kot}.

\begin{exa}
Consider the standard action of the 2-torus $\T$ on $\A^2=\Spec\kk[x,y]$. This action induces an 
$\N^2$-grading on $\Der(\kk[x,y])$ with 
graded pieces ${\Der}_e(\kk[x,y])$ where $e=(i,j)\in\ZZ^2$ runs over the lattice vectors with $i,j\ge -1$ and $(i,j)\neq (-1,-1)$. 
The piece ${\Der}_e(\kk[x,y])$ consists of all the homogeneous derivations of $\kk[x,y]$ of degree $e$. For $i,j\ge 0$ one has
$${\Der}_e(\kk[x,y])=\left\{x^iy^j\left(ax\frac{\p}{\p x}+by\frac{\p}{\p y}\right)\,|\,a,b\in\kk\right\}\,.$$ 
The pieces which correspond to the Demazure facets \begin{equation}\label{eq:facets}
\mathcal{S}_1\cap M=\{(-1,j)\,|\,j\in\ZZ_{\ge 0}\}\quad\mbox{and}\quad\mathcal{S}_2\cap M=\{(i,-1)\,|\,i\in\ZZ_{\ge 0}\}\end{equation} 
 are
$${\Der}_{(-1,j)}(\kk[x,y])=\left\{ay^j\frac{\p}{\p x}\,|\,a\in\kk\right\}\quad\mbox{resp.,}
\quad{\Der}_{(i,-1)}(\kk[x,y])=\left\{ax^i\frac{\p}{\p y}\,|\,a\in\kk\right\}\,.$$
Any one-parameter subgroup $T\subset\TT$ 
induces a $\Z$-grading on both $\kk[x,y]$ and $\Der(\kk[x,y])$. Letting $l_T$ be the  integral linear form  on $\Z^2$ associated with $T$ and 
$l_T=k$ be a supporting affine line for the Newton polytope $N(\p)$,
consider the corresponding $T$-principal part of $\partial$, 
\begin{equation}\label{eq: principal-part} \partial_T=\sum_{e\in \{l_T=k\}\cap\Z^2} \partial_{e}\in\Der(\kk[x,y])\,,\end{equation} where $\p_e$ 
stands for the homogeneous component of $\p$ of degree $e$ in decomposition \eqref{eq:decomp}. 
According to Lemma~\ref{lem:non-homog-deriv}, if $\partial$ is locally nilpotent then also $\partial_T$ is. 
In particular, for any  vertex $e$ of the Newton polytope $N(\p)$ the corresponding derivation $\p_{e}$ 
is locally nilpotent. Consequently, all the vertices of the Newton polytope $N(\p)$ are situated on the 
Demazure facets \eqref{eq:facets}. 
Hence the Newton polytope $N(\p)$ is either a quadrilateral, a triangle, a line segment, or finally a point. \end{exa}

\subsection{Degeneration techniques} \label{ss:degeneration}
We explore the $M$-grading on $\Der(A)$ in the following degeneration trick. 

\begin{prop}\label{prop: principal-part} Consider a subgroup $G\subset \Aut(X)$ 
normalized by a one-parameter subgroup $T$ of the torus $\T$. Let $H=\exp(\kk\p)$ be a $\GG_a$-subgroup 
of $G$ where $\p\in {\LND}\,(A)$, and let $\p_T\in {\LND}\,(A)$ be the $T$-principal part of $\p$. 
Then $H_T=\exp(\kk\p_T)$ is a $\GG_a$-subgroup of $\overline{G}$. \end{prop}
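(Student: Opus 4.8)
The plan is to realize the $T$-principal part $\p_T$ as a limit of conjugates of $\p$ under one-parameter subgroups of $T$, and then to invoke Lemma~\ref{lem:orbit-closures}(b) to conclude that $H_T=\exp(\kk\p_T)$ lies in $\overline{G}$. Concretely, let $l_T$ be the integral linear form on $M$ associated to $T$, so that $T=\{\lambda(t)\}$ acts on a character $\chi^m$ (equivalently on a homogeneous derivation $\p_{\rho,e}$) with weight $l_T(m)$ (resp.\ $l_T(e)$). Writing $\p=\sum_{e}\p_e$ as in \eqref{eq:decomp}, conjugation by $\lambda(t)\in T$ multiplies each homogeneous component $\p_e$ by $t^{l_T(e)}$; that is, ${\Ad}_{\lambda(t)}(\p)=\sum_e t^{l_T(e)}\p_e$. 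Let $k=\min\{l_T(e):\p_e\neq 0\}$ be the minimal weight occurring, achieved exactly on the supporting face $\{l_T=k\}$ of $N(\p)$, so that $\p_T=\sum_{l_T(e)=k}\p_e$ by \eqref{eq: principal-part}. Then $t^{-k}{\Ad}_{\lambda(t)}(\p)=\sum_e t^{l_T(e)-k}\p_e=\p_T+t\cdot(\text{LND-valued polynomial in }t)$, which extends to a morphism $\mathbb{A}^1\to\Der(A)$ whose value at $t=0$ is exactly $\p_T$.

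The next step is to pass from this statement about derivations to a statement about automorphisms. For $t\neq 0$, the derivation $t^{-k}{\Ad}_{\lambda(t)}(\p)$ is a nonzero scalar multiple of a conjugate of $\p$, hence the $\GG_a$-subgroup $\exp\!\bigl(\kk\, t^{-k}{\Ad}_{\lambda(t)}(\p)\bigr)=\lambda(t)\,H\,\lambda(t)^{-1}$ coincides (as a subgroup) with a conjugate of $H$; since $G$ is normalized by $T$, this conjugate lies in $G$. Now fix a parameter $s\in\kk$ and consider the map
\[
\rho\colon \mathbb{A}^1\longrightarrow \Aut(X),\qquad
\rho(t)=\exp\!\bigl(s\cdot t^{-k}{\Ad}_{\lambda(t)}(\p)\bigr)\quad(t\neq 0),\qquad \rho(0)=\exp(s\,\p_T).
\]
One checks that $\rho$ is a morphism: on structure functions $\exp(s\,\widetilde{\p}_t)$ is given by the finite sum $\sum_{j\ge 0}\frac{s^j}{j!}\widetilde{\p}_t^{\,j}$ with $\widetilde{\p}_t:=t^{-k}{\Ad}_{\lambda(t)}(\p)$, and each coefficient is polynomial in $t$ by the computation above (local nilpotence makes the sum finite, uniformly in $t$, after choosing generators of $A$ as in the proof of Lemma~\ref{lem:ad-LND}). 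Since $\rho(t)\in G$ for all $t\neq 0$, Lemma~\ref{lem:orbit-closures}(b) gives $\rho(0)=\exp(s\,\p_T)\in\overline{G}$. As $s\in\kk$ is arbitrary, the whole $\GG_a$-subgroup $H_T=\exp(\kk\p_T)$ is contained in $\overline{G}$.

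The main obstacle — really the only genuinely technical point — is verifying that $\rho$ is a morphism of ind-varieties, i.e.\ that the entries of $\rho(t)$ are polynomial (not just rational) in $t$ across $t=0$, so that Lemma~\ref{lem:orbit-closures}(b) applies. This is precisely where the choice of the normalizing exponent $t^{-k}$ with $k=\min l_T(e)$ matters: without renormalization ${\Ad}_{\lambda(t)}(\p)$ would in general blow up or degenerate to $0$ as $t\to 0$. One must also make sure the image $\rho(\mathbb{A}^1)$ lands in a single $\Sigma_s$, which follows because $\deg$ of the finitely many automorphisms $\exp(s\,t^{-k}{\Ad}_{\lambda(t)}(\p))$ is bounded (the derivations $\widetilde{\p}_t$ have degrees lying in the fixed finite set $\{e:\p_e\neq0\}$, and the nilpotency length is uniformly bounded). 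Everything else is the bookkeeping of the $M$-grading recorded in the preceding subsection, so the proof is short.
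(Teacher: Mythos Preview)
Your argument is correct and follows essentially the same route as the paper's proof: decompose $\p$ into $T$-homogeneous pieces, conjugate by $T$ so that each piece scales by a power of the parameter, renormalize the time variable to isolate the extremal face, and then invoke Lemma~\ref{lem:orbit-closures}(b). The only cosmetic difference is a sign convention: the paper takes the face where $l_T$ attains its \emph{maximum} and conjugates via $t_\lambda^{-1}\circ\p\circ t_\lambda=\lambda^{-l_T(e)}\p_e$, whereas you take the \emph{minimum} and conjugate in the opposite direction; these are interchanged by replacing $T$ with $T^{-1}$, so the arguments are equivalent (note, however, that Corollary~\ref{cor:limit-homog-deriv} is phrased with the ``max'' convention).
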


\begin{proof} Let $N(\p)$ be the Newton polytope of $\p$, let $l_T$  be the linear form  on $M$ associated with $T$, and let 
$$l_{\max}=\max\{l_T|_{N(\p)}\}\quad\mbox{and}\quad l_{\min}=\min\{l_T|_{N(\p)}\}\,.$$ Then one has $\p_T=\p_\tau$ 
for the face  $\tau$ of $N(\p)$  on which $l_T$ achieves its maximal value $l_{\max}$. 

The action of $T\cong\GG_m$ on $\Der(A)$ defines a $\ZZ$-grading such that any $\p\in\Der(A)$ admits a decomposition 
$$\p=\sum_{s=l_{\min}}^{l_{\max}} \p_s\quad\mbox{where}\quad 
\p_s=\sum_{e\in N(\p)\cap \{l_T=s\}} \p_{e}\in\Der(A)\quad\mbox{with}\quad  \p_{e}\in{\Der}_e(A)\,.$$
Given an isomorphism $T\cong\GG_m$, an element $t_\lambda\in T$ with $\lambda\in \GG_m$ 
acts on $A$ via $$t_\lambda . \chi^m=\lambda^{l_T(m)}\chi^m\quad\forall m\in\sigma^\vee\,.$$ 
It follows that
$$t_\lambda^{-1}\circ\p_s\circ t_\lambda=\lambda^{-s}\p_s\,.$$
Therefore, one has
$$t_\lambda^{-1}\circ \exp(\tau\p_s)\circ t_\lambda=\exp(\tau\lambda^{-s}\p_s)\,$$
and, furthermore,
$$t_\lambda^{-1}\circ \exp(\tau\p)\circ t_\lambda=\exp\left(\sum_{s=l_{\min}}^{l_{\max}} \tau\lambda^{-s}\p_s\right)\,.$$
Letting $\tau=h\lambda^{l_{\max}}$ one obtains:
$$t_\lambda^{-1}\circ \exp(\tau\p)\circ t_\lambda=\exp\left(h\sum_{s=l_{\min}}^{l_{\max}}\lambda^{(l_{\max}-s)}\p_s\right)
\longrightarrow\exp(h\p_T)\quad\mbox{as}\quad \lambda\to 0\,$$ 
on any monomial $\chi^m\in A$, $m\in\sigma^\vee$. This convergence  guarantees 
the convergence with respect to the ind-group structure on $\Aut(X)$ 
associated to any given filtration $A=\bigcup_{r=1}^{\infty} A_r$ by  finite dimensional graded subspaces of $A$ 
such that $A_r\subset A_{r+1}$, cf.~\ref{sit:3.1}.
Since $t_\lambda^{-1}\circ\exp(\tau\p)\circ t_\lambda\in G$ for any $\lambda\in\kk^*$ and $\tau\in\kk$ 
one concludes by Lemma~\ref{lem:orbit-closures}(b) that $\exp(h\p_T)\in\overline{G}$ for any $h\in\kk$. 
\end{proof}

\begin{cor}\label{cor:limit-homog-deriv} 
Under the assumptions of Proposition {\rm~\ref{prop: principal-part}} suppose that $G$ 
is normalized by the torus $\TT$. Then any vertex $e$ of the Newton polytope  $N(\p)$ belongs to 
a Demazure facet $\mathcal{S}_i$, and the root subgroup $H_e$ is contained in $\overline{G}$. 
\end{cor}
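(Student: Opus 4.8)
The plan is to apply Proposition~\ref{prop: principal-part} in the special case where the normalizing one-parameter subgroup $T$ is chosen generically inside $\TT$, and then to combine this with Proposition~\ref{prop:homog-deriv} to identify the resulting homogeneous LND as a root derivation.

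First I would address the location of the vertices. Since $G$ is normalized by the whole torus $\TT$, it is in particular normalized by every one-parameter subgroup $T\subset\TT$. Pick a vertex $e$ of the Newton polytope $N(\p)$. Because $N(\p)$ is a lattice polytope, there exists an integral linear form $l_T$ on $M$ (hence a one-parameter subgroup $T\cong\GG_m$ of $\TT$) whose maximum over $N(\p)$ is attained \emph{only} at $e$; that is, the face $\tau$ on which $l_T|_{N(\p)}$ is maximal is the single vertex $\{e\}$. For this choice, the $T$-principal part $\p_T$ of $\p$ equals $\p_e$, the homogeneous component of $\p$ of degree $e$. By Proposition~\ref{prop: principal-part}, $H_e=\exp(\kk\p_e)$ is a $\GG_a$-subgroup of $\overline{G}$; in particular $\p_e\in\LND(A)$ and $\p_e\neq 0$ (it is nonzero precisely because $e$ is a vertex of $N(\p)$, so $\p_e$ is one of the nonzero homogeneous components appearing in the decomposition~\eqref{eq:decomp}).

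Next I would identify $e$ as a Demazure root. Since $\p_e$ is a nonzero homogeneous locally nilpotent derivation of $A$, Proposition~\ref{prop:homog-deriv}(c) applies: any such derivation has the form $\lambda\,\partial_{\rho_i,e'}$ for some Demazure root $e'\in\mathcal{S}_i$, some ray generator $\rho_i\in\Xi$, and some $\lambda\in\kk^*$. But by construction $\p_e$ has degree $e$, so $e'=e$; hence $e\in\mathcal{S}_i\cap M$ is a Demazure root belonging to the facet $\mathcal{S}_i$, and $\p_e=\lambda\,\partial_{\rho_i,e}$. Therefore $H_e=\exp(\kk\partial_{\rho_i,e})$ is exactly the root subgroup attached to $e$ (the scalar $\lambda$ only reparametrizes the subgroup), and we have shown $H_e\subset\overline{G}$.

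I do not expect a serious obstacle here: the statement is essentially a corollary obtained by feeding a sufficiently generic $T$ into the already-proved Proposition~\ref{prop: principal-part} and invoking the classification of homogeneous LNDs on a toric affine variety with no torus factor from Proposition~\ref{prop:homog-deriv}(c). The only point requiring a word of care is the choice of $l_T$ isolating a single vertex, which is elementary convex geometry for lattice polytopes (pick a linear functional in the interior of the normal cone of the vertex $e$, then clear denominators to make it integral), together with the remark that Proposition~\ref{prop: principal-part} does not require $l_T$ to be \emph{strictly} separating — it works for the top face $\tau$ whatever its dimension — so taking $\tau=\{e\}$ is legitimate.
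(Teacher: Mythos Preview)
Your argument is correct and follows the same route as the paper's own proof: choose a one-parameter subgroup $T\subset\TT$ whose associated linear form $l_T$ attains its maximum on $N(\p)$ uniquely at the given vertex $e$, apply Proposition~\ref{prop: principal-part} to obtain $\exp(\kk\p_e)\subset\overline{G}$, and invoke Proposition~\ref{prop:homog-deriv}(c) to see that $\p_e$ is a root derivation, hence $e\in\mathcal{S}_i$ and $H_e\subset\overline{G}$. The paper compresses this into a single sentence, but the content is identical.
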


\begin{proof}  It suffices to apply Propositions~\ref{prop:homog-deriv}(c) and~\ref{prop: principal-part} to a one-parameter subgroup 
$T\subset\TT$ such that $l_T|_{N(\p)}$ achieves its maximum at $e$ and only at $e$. 
\end{proof}

\begin{lem}\label{lem:two-roots}  Letting $n\ge 2$ consider two roots $e_i\in\mathcal{S}_i\cap M$, $i=1,2$. 
Let $\delta=\langle \rho_1,e_2\rangle + 1$. Suppose that $e_2+\delta e_1\in\mathcal{S}_1$, that is, 
$\langle \rho_2,e_1\rangle \ge 1$.
Then one has 
$H_{e_2+\delta e_1}\subset\overline{\langle H_{e_1},\,H_{e_2}\rangle} \,. $
\end{lem}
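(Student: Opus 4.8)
The plan is to combine the iterated-commutator computation (Lemma~\ref{lem:iterated} and Corollary~\ref{cor:NP-Ad}) with the degeneration trick (Corollary~\ref{cor:limit-homog-deriv}). Set $U=\partial_{\rho_1,e_1}$ and $V=\partial_{\rho_2,e_2}$, the homogeneous locally nilpotent derivations attached to the two given Demazure roots, so that $H_{e_1}=\exp(\kk U)$ and $H_{e_2}=\exp(\kk V)$. The hypothesis $\langle\rho_2,e_1\rangle\ge 1$ is precisely the assumption $c_2\ge 1$ needed in Lemma~\ref{lem:iterated}, and $\delta=\langle\rho_1,e_2\rangle+1=d_1+1$ matches the notation there; note also $d_1=\langle\rho_1,e_2\rangle\ge 0$ since $e_2\in\mathcal S_2$ is a root belonging to $\rho_2$, so $\langle\rho_1,e_2\rangle\ge 0$ as $\rho_1\ne\rho_2$. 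Thus Lemma~\ref{lem:iterated} applies and, in particular, \eqref{eq:delta-iterate} tells us that $e_2+\delta e_1\in\mathcal S_1\cap M$ and the corresponding homogeneous derivation $\partial_{\rho_1,f_\delta}$ with $f_\delta=e_2+\delta e_1$ is locally nilpotent; this is consistent with the stated hypothesis $e_2+\delta e_1\in\mathcal S_1$.

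Next I would form $\p={\Ad}_{\exp(U)}(V)$, which lies in ${\LND}\,(A)$ by the Baker--Campbell--Hausdorff discussion in~\ref{sit:BCH} (since $V$ is locally nilpotent and ${\ad}_U$ is locally nilpotent on $\Der(A)$ by Lemma~\ref{lem:ad-LND}). The $\GG_a$-subgroup $H=\exp(\kk\p)$ is a conjugate $\exp(U)\,H_{e_2}\,\exp(-U)$ of $H_{e_2}$ by an element of $\langle H_{e_1},H_{e_2}\rangle$, hence $H\subset G:=\langle H_{e_1},H_{e_2}\rangle$. By Corollary~\ref{cor:NP-Ad}, the Newton polytope $N(\p)$ is the segment $[e_2,\,e_2+\delta e_1]$, so its two vertices are $e_2$ and $e_2+\delta e_1$. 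The group $G$ is generated by root subgroups and is therefore normalized by the acting torus $\TT$ (each $H_{e_i}$ is, by Definition~\ref{def:root-sbgrp}), so Corollary~\ref{cor:limit-homog-deriv} applies: every vertex of $N(\p)$ lies on a Demazure facet and the associated root subgroup lies in $\overline G$. Applied to the vertex $e_2+\delta e_1\in\mathcal S_1$, this gives $H_{e_2+\delta e_1}\subset\overline G=\overline{\langle H_{e_1},H_{e_2}\rangle}$, which is exactly the assertion.

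The only genuinely delicate point is making sure all the hypotheses of Lemma~\ref{lem:iterated} and Corollary~\ref{cor:limit-homog-deriv} are legitimately in force. For Lemma~\ref{lem:iterated} one needs $U$ and $V$ to be nonzero homogeneous LNDs with $e_i\in\mathcal S_i\cap M$ and $c_2=\langle\rho_2,e_1\rangle\ge 1$; all of this is given. One should also check $d_1=\langle\rho_1,e_2\rangle\ge 0$, which follows from the Demazure-root condition (ii) in Definition~\ref{def:Demazure-root} applied to $e_2$ and the ray generator $\rho_1\ne\rho_2$. For Corollary~\ref{cor:limit-homog-deriv} one needs $G$ to be normalized by $\TT$ and the relevant endpoint to be a vertex of the segment $N(\p)$; the first holds because $\TT$ normalizes each Demazure root subgroup, and the second is immediate once $\delta\ge 1$, i.e.\ $d_1\ge 0$, guaranteeing that $N(\p)$ is a nondegenerate segment with $e_2+\delta e_1$ as a genuine endpoint. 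I expect the writeup to be short: essentially a citation of Corollary~\ref{cor:NP-Ad} for the shape of $N(\p)$ followed by a citation of Corollary~\ref{cor:limit-homog-deriv} for the conclusion, with a sentence verifying the sign/normalization hypotheses.

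\begin{proof}
Set $U=\partial_{\rho_1,e_1}$ and $V=\partial_{\rho_2,e_2}$; these are nonzero homogeneous LNDs by Proposition~\ref{prop:homog-deriv}(c), and $H_{e_i}=\exp(\kk\partial_{\rho_i,e_i})$ for $i=1,2$. Write $c_2=\langle\rho_2,e_1\rangle$ and $d_1=\langle\rho_1,e_2\rangle$, so that $\delta=d_1+1$. By hypothesis $c_2\ge 1$, and $d_1\ge 0$ because $e_2\in\mathcal S_2$ is a Demazure root belonging to $\rho_2$ and $\rho_1\ne\rho_2$, whence condition (ii) of Definition~\ref{def:Demazure-root} gives $\langle\rho_1,e_2\rangle\ge 0$. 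Thus Lemma~\ref{lem:iterated} applies; in particular, by \eqref{eq:delta-iterate}, $f_\delta=e_2+\delta e_1\in\mathcal S_1\cap M$, in agreement with the assumption.

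Let $G=\langle H_{e_1},H_{e_2}\rangle$ and put $\p={\Ad}_{\exp(U)}(V)$. Since $V\in{\LND}\,(A)$ and ${\ad}_U$ is locally nilpotent on $\Der(A)$ by Lemma~\ref{lem:ad-LND}, the element $\p$ is a well-defined locally nilpotent derivation (see~\ref{sit:BCH}), and $\exp(\kk\p)=\exp(U)\,H_{e_2}\,\exp(-U)\subset G$. By Corollary~\ref{cor:NP-Ad} the Newton polytope $N(\p)$ is the segment $[e_2,\,e_2+\delta e_1]$; since $\delta\ge 1$, this is a nondegenerate segment whose vertices are $e_2$ and $e_2+\delta e_1$.

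Finally, $G$ is generated by the root subgroups $H_{e_1}$ and $H_{e_2}$, each of which is normalized by the acting torus $\TT$ (Definition~\ref{def:root-sbgrp}), so $G$ is normalized by $\TT$. Applying Corollary~\ref{cor:limit-homog-deriv} to the $\GG_a$-subgroup $\exp(\kk\p)\subset G$ and to the vertex $e_2+\delta e_1$ of $N(\p)$, we conclude that this vertex lies on a Demazure facet (namely $\mathcal S_1$) and that the root subgroup $H_{e_2+\delta e_1}$ is contained in $\overline{G}=\overline{\langle H_{e_1},H_{e_2}\rangle}$.
\end{proof}
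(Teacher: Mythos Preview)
Your proof is correct and follows exactly the same route as the paper: set $U=\partial_{\rho_1,e_1}$, $V=\partial_{\rho_2,e_2}$, form $\partial={\Ad}_{\exp(U)}(V)$, invoke Corollary~\ref{cor:NP-Ad} to identify $N(\partial)$ as the segment $[e_2,e_2+\delta e_1]$, and then apply Corollary~\ref{cor:limit-homog-deriv}. The paper's version is a one-line citation of these two corollaries, whereas you spell out the hypothesis checks (notably $d_1\ge 0$ and the $\TT$-normalization of $G$), but the argument is identical.
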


\begin{proof} This follows by Corollaries~\ref{cor:NP-Ad} and~\ref{cor:limit-homog-deriv} applied to $U=\p_{\rho_1,e_1}$, $V=\p_{\rho_2,e_2}$, and
$\p=\exp(\ad_U)(V)={\Ad}_{\exp(U)}(V)$, see \eqref{eq:BCH}.
\end{proof}

\subsection{Cox ring and total coordinates}\label{ss:Cox} 
Let us recall some generalities on the Cox ring $R(X)$ of a  toric affine  variety $X$, see, e.g., 
\cite[Ch.\ 2]{ADHL}, \cite[Sect.\ 1]{Cox}, \cite[Ch.\ 5]{CLS} for detailed expositions. 

\begin{sit}\label{sit:cox-ring} As before, $\Xi=\{\rho_1,\ldots,\rho_k\}$ stands for the set of primitive ray generators of the cone $\sigma\subset N_\QQ$.
To any ray $\rho_i\in\Xi$ there corresponds a facet $\rho_i^\vee$ of the dual cone 
$\sigma^\vee$ and a $\TT$-invariant prime Weil divisor $D_i=D(\rho_i)$ on $X$. The classes $[D_1],\ldots,[D_k]$ generate the class group $\Cl(X)$.
The Cox ring $R(X)$ is the polynomial ring $\kk[x_1,\ldots,x_k]$ graded by the class group $\Cl(X)$ 
in such a way that any variable $x_i$ is a homogeneous element of degree $\deg(x_i)=[D_i]\in\Cl(X)$, $i=1,\ldots,k$. This defines the grading uniquely.  

Let $\TT(k)\cong (\GG_m)^k$ be the standard $k$-torus acting on $\A^k$, and let $F_{\Cox}={\Hom}\,(\Cl(X),\GG_m)$ 
be the dual group of the group $\Cl(X)$. Then $F_{\Cox}$ is a quasitorus, that is, the direct product of an algebraic torus and a finite Abelian group. 
By duality, $\Cl(X)$ is the group of characters of $F_{\Cox}$. The $\Cl(X)$-grading on $\kk[x_1,\ldots,x_k]$ defines an action on $\A^k$ 
of the quasitorus $F_{\Cox}\subset\TT(k)$. The structure ring $\mathcal{O}_X(X)$ is canonically isomorphic to the ring of invariants 
$\kk[x_1,\ldots,x_k]^{F_{\Cox}}$. 
This yields (canonical) isomorphisms $X\cong \A^k//F_{\Cox}$ and $\TT\cong \TT(k)/F_{\Cox}$. 

 The linear forms $\rho_1,\ldots,\rho_k$ on $M_\QQ$ define a monomorphism of  lattices $\varphi\colon M\hookrightarrow \ZZ^k$ 
 which extends to the linear embedding 
 $$\Phi\colon M_{\QQ}\hookrightarrow \A^k_\QQ,\quad v\mapsto (\langle\rho_1,v\rangle,\ldots,\langle\rho_k,v\rangle)\,.$$ 
 The coordinates of the image $\Phi(m)$ are called the \emph{total coordinates} of $m\in M$. 

We let $\Delta_{\ge 0}^\vee\subset\A^k_\QQ$ be the positive octant, and let $\hat{\mathcal{S}}_i=\mathcal{S}_i(\Delta_{\ge 0}^\vee)$ 
be the $i$th Demazure facet of $\Delta_{\ge 0}^\vee$.
The image $\Phi(e)$ of a Demazure root $e\in\mathcal{S}_i$ is a Demazure root, say, $\hat e\in\hat{\mathcal{S}}_i$. 
Any root vector $\hat e\in \Phi(M_\QQ)\cap\ZZ^k$ appears in this way. 
 The action of the root subgroup $H_e$ on $X$ induces the action of the root subgroup $H_{\Phi(e)}$ on $\A^k$, see \cite[Sect.\ 4]{Cox}. 
 In more detail, one has the following Lemma~\ref{lem:root-sbgrps-correspondence} (cf.\ \cite[Lem.\ 4.4]{Cox}). Let us introduce the necessary notation.
 \end{sit}
 
 For a lattice vector $e=(c_1,\ldots,c_k)\in\ZZ^k$ we let $x^e=x_1^{c_1}\cdots x_k^{c_k}$. 
 For $e\in\mathcal{S}_1\cap M$ one has $\hat e=(-1,c_2,\ldots,c_k)\in\ZZ^k$ where $c_i\in\ZZ_{\ge 0}$, $i=2,\ldots,k$.
The root subgroup $H_{\hat e}$ acts on $\A^k$ via
\begin{equation}\label{eq:act} (x_1,\ldots,x_k)\mapsto (x_1+tx^{\hat e+\varepsilon_1},\,x_2,\ldots,x_k),\quad t\in\kk\,,\end{equation}
where $(\varepsilon_1,\ldots,\varepsilon_k)$ is the standard basis in $\A^k$ and 
$x^{\hat e+\varepsilon_1}=x_2^{c_2}\cdots x_k^{c_k}$.

\begin{lem}\label{lem:root-sbgrps-correspondence}
\begin{itemize}
\item[{\rm (a)}] A Demazure root $\hat e\in\hat{\mathcal{S}}_i\cap\ZZ^k$ belongs to the image $\Phi(\mathcal{S}_i\cap M)$ if and only if $\deg (x^{\hat  e})=0$, 
that is, $x^{\hat e}\in  {\rm Frac}\,(\kk[x_1,\ldots,x_k])^{F_{\Cox}}$. 
\item[{\rm (b)}] The subgroups $H_{\hat e}$ and $F_{\Cox}$ of $\Aut(\A^k)$ commute  if and only if $\hat  e=\Phi(e)$ for a root $e$ of $\sigma^\vee$.
 In the latter case 
the action of the root subgroup $H_{\hat  e}$ on $\A^k$ descends to the action of the root subgroup $H_e$ on $X$ 
under the quotient morphism $\A^k\to X=\A^k// F_{\Cox}$.
\end{itemize}
\end{lem}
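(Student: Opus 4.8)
The plan is to deduce both parts from the exact sequence of lattices
$$0 \longrightarrow M \stackrel{\varphi}{\longrightarrow} \ZZ^k \stackrel{\pi}{\longrightarrow} \Cl(X) \longrightarrow 0,$$
where $\pi$ sends the $j$th standard basis vector $\varepsilon_j$ to $[D_j]$, so that $\deg(x^{\hat e}) = \pi(\hat e)$ for every $\hat e\in\ZZ^k$, together with its dual
$$1 \longrightarrow F_{\Cox} \longrightarrow \TT(k) \longrightarrow \TT \longrightarrow 1$$
obtained by applying $\Hom(-,\GG_m)$. The basic observation is that a character $\hat e\in\ZZ^k$ of $\TT(k)$ restricts to the trivial character of the quasitorus $F_{\Cox}$ if and only if $\hat e$ lifts to a character of $\TT$, that is, $\hat e\in\Phi(M)$; here one uses that $\kk$ is algebraically closed, so that $\Hom(\Cl(X),\kk^*)$ separates the points of the finitely generated group $\Cl(X)$ (this is the one place where the possible torsion of $\Cl(X)$ must be handled, and it causes no trouble).

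For (a), the Laurent monomial $x^{\hat e}$ lies in ${\rm Frac}\,(\kk[x_1,\ldots,x_k])^{F_{\Cox}}$ exactly when the $F_{\Cox}$-weight of $x^{\hat e}$, which is the restriction to $F_{\Cox}$ of the character $\hat e$, is trivial; by the observation above this happens if and only if $\hat e\in\Phi(M)$, i.e. if and only if $\deg(x^{\hat e}) = \pi(\hat e) = 0$. Finally, since $\langle\rho_j,v\rangle$ is the $j$th coordinate of $\Phi(v)$ for all $v\in M_\QQ$, the defining relations of $\hat{\mathcal{S}}_i$ pull back under $\Phi$ to the defining relations of $\mathcal{S}_i$; hence if $\hat e\in\hat{\mathcal{S}}_i\cap\Phi(M)$ then its preimage $e\in M$ lies in $\mathcal{S}_i$, so $\hat e\in\Phi(\mathcal{S}_i\cap M)$.

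For (b), I would compute the conjugation action of $F_{\Cox}$ on the locally nilpotent derivation generating $H_{\hat e}$. Taking $i=1$, one has $\hat e=(-1,c_2,\ldots,c_k)$ and the generating derivation is $\p_{\hat e}=x^{\hat e+\varepsilon_1}\,\p/\p x_1$ by \eqref{eq:act}. An element $f\in F_{\Cox}$ acts on $\A^k$ by $x_j\mapsto f([D_j])\,x_j$, and a direct check shows that conjugation by $f$ multiplies $\p_{\hat e}$ by the scalar $\prod_j f([D_j])^{(\hat e)_j}=f(\pi(\hat e))=f(\deg x^{\hat e})$. Hence every $f\in F_{\Cox}$ centralizes $H_{\hat e}$ if and only if $f(\pi(\hat e))=1$ for all $f$, i.e. if and only if $\pi(\hat e)=0$, which by (a) is equivalent to $\hat e=\Phi(e)$ for a Demazure root $e$ of $\sigma^\vee$. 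In that case $H_{\hat e}$ preserves the ring of invariants $\kk[x_1,\ldots,x_k]^{F_{\Cox}}\cong\cO_X(X)$ and therefore descends to a $\GG_a$-action on $X=\A^k//F_{\Cox}$; to identify it with $H_e$ it suffices to evaluate $\p_{\hat e}$ on the invariant monomials $\chi^m=x^{\Phi(m)}$, $m\in\sigma^\vee\cap M$, which gives
$$\p_{\hat e}\bigl(x^{\Phi(m)}\bigr)=\langle\rho_1,m\rangle\,x^{\Phi(m)+\hat e}=\langle\rho_1,m\rangle\,x^{\Phi(m+e)}=\langle\rho_1,m\rangle\,\chi^{m+e}\,,$$
that is, the derivation $\partial_{\rho_1,e}$ of Proposition~\ref{prop:homog-deriv}, so the descended subgroup is precisely $H_e$.

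No genuine obstacle is expected beyond bookkeeping: one must keep straight the three lattices $M,\ZZ^k,\Cl(X)$ together with their dual groups $\TT,\TT(k),F_{\Cox}$, and be mildly careful that $\deg(x^{\hat e})=0$ and $F_{\Cox}$-invariance both test membership in $\Phi(M)$ itself rather than in its saturation $\Phi(M_\QQ)\cap\ZZ^k$. The whole argument runs parallel to \cite[Lem.\ 4.4]{Cox}.
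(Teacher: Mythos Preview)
Your proof is correct and follows essentially the same route as the paper's: both arguments reduce (a) to the exactness of $0\to M\to\ZZ^k\to\Cl(X)\to 0$ at $\ZZ^k$ (the paper unpacks this as ``$-D_1+\sum c_jD_j$ is principal iff it equals ${\rm div}(\chi^m)$ for some $m\in M$'', you package it as $\pi(\hat e)=0\Leftrightarrow\hat e\in\Phi(M)$), and both deduce (b) by observing that the $F_{\Cox}$-weight of the derivation $x^{\hat e+\varepsilon_1}\partial/\partial x_1$ is $\deg(x^{\hat e})$. Your explicit verification of the descent on the monomials $\chi^m=x^{\Phi(m)}$ is a welcome addition that the paper leaves implicit.
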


\begin{proof} (a) Recall that the lattice of $\TT$-invariant divisors on $X$ is generated by $D_1,\ldots,D_k$. One may assume $i=1$. One has
$$\deg(\hat e)=0\Leftrightarrow [D_1]=c_2[D_2]+\ldots+c_k[D_k]\quad\mbox{in}\,\,\,\Cl(X)\,.$$ 
The latter equality amounts to 
\begin{equation}\label{eq:div} c_2D_2+\ldots+c_kD_k-D_1={\rm div}\, (\chi^m)\in {\rm PDiv}(X)^\TT\quad\mbox{for some}\quad m\in M \,\end{equation}
where $${\rm div}\,(\chi^m)=\sum_{i=1}^k \langle \rho_i,m\rangle D_i\,.$$ Thus, \eqref{eq:div} admits a solution 
$m\in M$ if and only if $$\langle \rho_1,m\rangle=-1\quad\mbox{and}\quad
\langle \rho_i,m\rangle =c_i\ge 0\quad\forall i=2,\ldots,k\,,$$ that is, if  $m=e\in\mathcal{S}_1\cap M$ is a Demazure root and $\hat e=\Phi(e)$. 

(b) 
The  action \eqref{eq:act} on $\A^k$ commutes with the $F_{\Cox}$-action on $\A^k$ if and only if the LND 
$\p_{\varepsilon_1^\vee,\hat e}=x_2^{c_2}\cdots x_k^{c_k}\p/\p x_1$ has zero $F_{\Cox}$-degree, 
that is, if $\deg(x_1)=\deg(x^{\hat e+\varepsilon_1})$ or, which is equivalent, 
$$[D_1]=c_2[D_2]+\ldots+c_k[D_k]\,.$$
So, the first assertion of (b) follows by the argument used in the proof of (a). 
The second one is a simple consequence of the first. Indeed, the  LND $\p_{\varepsilon_1^\vee,\hat e}\in{\LND}\,(\kk[x_1,\ldots,x_k])$ 
of $F_{\Cox}$-degree zero restricts to the ring of invariants $\kk[x_1,\ldots,x_k]^{F_{\Cox}}=\mathcal{O}_X(X)$ 
yielding $\p_{\rho_1,e}\in {\LND}\,(\mathcal{O}_X(X))$. Hence the $H_{\hat  e}$-action on $\A^k$ descends to the $H_e$-action on $X$. 
\end{proof}

\begin{rem}\label{rem:normalized} The connected group $H_{\hat  e}$ normalizes $F_{\Cox}$ in $\Aut(\A^k)$ if and only if these groups 
commute. Indeed, $\Aut(F_{\Cox})$ is a finite extension of ${\rm GL}(k-n,\ZZ)$, hence a discrete group. 
\end{rem}

\section{Infinite transitivity on toric  varieties}\label{sec:toric-inf-trans}
In this section we apply Theorem~\ref{th-inf-tr} to a toric affine  variety $X$ with no torus factor. 
It is known (\cite[Thm.\ 2.1]{AKZ}) that  the action of $\SAut(X)$ on the smooth locus $\reg(X)$ is  infinitely transitive. 
Notice that the group $\SAut(X)$ is very large. Under a mild additional assumption we construct in Theorem~\ref{thm:toric-inf-trans} 
a subgroup $G\subset\SAut(X)$ which still acts infinitely transitively on~$\reg(X)$ and is generated by 
a finite number of root subgroups, as it is predicted by Conjecture~\ref{sit:conjecture}. We start with the case where $X$ is an affine space viewed as a toric variety.

\subsection{Infinite transitivity on the affine spaces: an example}\label{ss:An}
\begin{sit}\label{sit:nota-aff-space}
The affine space $\A^n=\Spec \kk[x_1,\ldots,x_n]$ can be regarded as a toric variety.  
The mutually dual lattices $N$ and $M$ are the standard lattices  of integer vectors $N=\ZZ^n\subset \A^n_\QQ$ and $M=\ZZ^n\subset(\A^n_\QQ)^*$. The cones $\sigma\subset \A^n_\QQ$ and $\sigma^\vee\subset(\A^n_\QQ)^*$ are the positive octants. The ray generators $\rho_1,\ldots,\rho_n\in N$ form the standard basis of $\A^n_\QQ$. The  dual basis $(\varepsilon_1,\ldots,\varepsilon_n)$ is the  standard  base of the lattice $M$.  The LNDs associated with the Demazure roots $e_i=-\varepsilon_i\in\mathcal{S}_i$ are 
the partial derivatives $$\p_i=\p/\p x_i=\p_{\rho_i,e_i}\in {\LND}\,(A),\quad  i=1,\ldots,n\,.$$ For a lattice vector $m=
(m_1,\ldots,m_n)\in M$ we write $x^m=x_1^{m_1}x_2^{m_2}\cdots x_n^{m_n}$. 
Given a root vector $e\in \mathcal{S}_i\cap M$ the associated root subgroup 
\begin{equation}\label{H-e} H_e=\exp(\kk x^{e+\varepsilon_i}\p_i)\subset\SAut(\A^n)\,\end{equation}
 acts on $\A^n$ via elementary transformations 
 $$x=(x_1,\ldots,x_n)\mapsto (x_1,\ldots,x_{i-1}, x_i+tx^{e+\varepsilon_i},x_{i+1},\ldots,x_n)\quad\mbox{where}\quad t\in\kk\,.$$ 
 For instance,  letting $H_{i,j}=\exp(\kk x_j^2\p_i)$ where $j\neq i$ the root subgroups  $H_{1,2}$ and $H_{2,3}$ act on $\A^n$ via 
\begin{equation}\label{eq:H-1-2} (x_1,\ldots,x_n)\mapsto (x_1+tx_2^2,x_2,\ldots,x_n),\,\, \mbox{resp.,}\,\, (x_1,\ldots,x_n)\mapsto (x_1,x_2+tx_3^2,x_3,\ldots,x_n),\,\,t\in\kk\,.
\end{equation}
To simplify the notation we will write just the coordinates of the image for such an action. 
The following result confirms Conjecture~\ref{sit:conjecture}  in the 
case $X=\A^n$, $n\ge 2$.
\end{sit}

\begin{thm}\label{thm:IV} Consider the natural action of the symmetric group $\mathbb{S}(n)$ on $\A^n$ by permutations. 
Then for any $n\ge 3$  the subgroup $$G=\langle H_{1,2},\,\mathbb{S}(n)\rangle \subset\Aut(\A^n)\,$$
acts infinitely transitively on $\mathscr{O}_G=\A^n\setminus\{0\}$. 
\end{thm}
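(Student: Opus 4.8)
The plan is to deduce infinite transitivity from Theorem~\ref{th-inf-tr} by exhibiting a suitable collection of LNDs satisfying one of the conditions $(\alpha)$--$(\gamma)$ in~\ref{sit-2.1}. First I would use the symmetric group $\mathbb{S}(n)\subset G$ together with $H_{1,2}$ to produce, for every ordered pair $(i,j)$ with $i\neq j$, the root subgroup $H_{i,j}=\exp(\kk x_j^2\partial_i)$: indeed $H_{i,j}=\sigma H_{1,2}\sigma^{-1}$ for any permutation $\sigma$ sending $1\mapsto i$, $2\mapsto j$. Thus $G$ contains all the $\partial_{\rho_i,\,2\varepsilon_j-\varepsilon_i}$, i.e.\ all LNDs of the form $x_j^2\partial_i$. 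Among these I would single out the $n$ linearly independent derivations $\partial_i':=x_{i+1}^2\partial_i$ (indices mod $n$, using $n\ge 3$ so the pattern is genuinely cyclic), which are pairwise non-collinear; their kernels contain the polynomial subalgebras $A_i=\kk[x_1,\ldots,\widehat{x_i},\ldots,x_n]$ (all variables except $x_i$). One checks $A_i\subset\ker\partial_i'$, and together $A_1,\ldots,A_n$ generate $\kk[x_1,\ldots,x_n]$, so condition $(\alpha)$ holds — provided the $\GG_a$-subgroups $H_i(a_i)=\exp(\kk a_i\partial_i')$ for $a_i\in A_i$ all lie in $G$.

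The key point, and the one requiring the degeneration machinery of Section~\ref{ss:degeneration}, is that $G$ (or rather its ind-closure $\overline G$) contains enough replicas $\exp(\kk a\, x_j^2\partial_i)$ with $a\in A_i$. By Lemma~\ref{lem:LND-comm}, multiplying $\partial_{\rho_i,e}$ by $\chi^f$ for $f\in\tau_i\cap M$ stays within the same Demazure facet, so every $x^f x_j^2\partial_i$ with $f$ a monomial not involving $x_i$ is again an LND attached to $\rho_i$ — i.e.\ $H_e\in\Aut(\A^n)$ for every root $e\in\mathcal{S}_i\cap M$. To get these inside $\overline G$, I would iterate Lemma~\ref{lem:two-roots} (equivalently Corollaries~\ref{cor:NP-Ad} and~\ref{cor:limit-homog-deriv}): conjugating one root subgroup $H_{e_2}$ by another $H_{e_1}$ with $\langle\rho_2,e_1\rangle\ge 1$ produces $H_{e_2+\delta e_1}\subset\overline{\langle H_{e_1},H_{e_2}\rangle}$ where $\delta=\langle\rho_1,e_2\rangle+1$. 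Starting from the $H_{i,j}$ already in $G$ and applying this repeatedly (together with conjugation by $\mathbb{S}(n)$), one reaches all root subgroups $H_e$, $e\in\bigcup_i\mathcal{S}_i\cap M$, and hence — by taking finite products of commuting replicas within a fixed $\mathcal{S}_i$ and invoking Lemma~\ref{commutator}(c) — all the desired $\exp(\kk a_i\partial_i')$, $a_i\in A_i$, lie in $\overline G$.

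Since $\overline G$ contains $H_0=\{\mathrm{id}\}$ (take $b_1=0$, as permitted in case $(\alpha)$) and all the $H_i(a_i)$, and since these $\partial_i'$ include $n$ linearly independent LNDs whose $A_i$ generate $\cO_{\A^n}(\A^n)$, Theorem~\ref{th-inf-tr} applies to the subgroup $\widetilde G\subset\overline G$ generated by them: $\widetilde G$ acts on $\A^n$ with an open orbit $\mathscr{O}_{\widetilde G}$ and is infinitely transitive there. A direct check — each $x_{i+1}^2\partial_i$ vanishes only where $x_{i+1}=0$, and the orbit of any point with all coordinates nonzero is acted on transitively by enough one-parameter subgroups to span the tangent space, while the single point $0$ is clearly fixed by all $x^e\partial_i$ — identifies $\mathscr{O}_{\widetilde G}=\A^n\setminus\{0\}$; more carefully, one shows $\SAut(\A^n)$ (and a fortiori $G$) has open orbit exactly $\A^n\setminus\{0\}$ by flexibility of $\A^n$, so $\mathscr{O}_G=\A^n\setminus\{0\}$. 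Finally, because $G$ is algebraically generated (by $H_{1,2}$ and the finite group $\mathbb{S}(n)$), Proposition~\ref{lem:many-points}(a),(c) transfers the infinite transitivity from $\overline G\supset\widetilde G$ down to $G$ itself on the common open orbit $\mathscr{O}_G=\mathscr{O}_{\overline G}=\A^n\setminus\{0\}$.

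The main obstacle I anticipate is the bookkeeping in the second paragraph: verifying that iterated application of Lemma~\ref{lem:two-roots}, interleaved with the $\mathbb{S}(n)$-conjugations, really generates \emph{all} root subgroups $H_e$ for $e$ in the Demazure facets of the positive octant — one must check the hypothesis $\langle\rho_2,e_1\rangle\ge 1$ can be arranged at each step so as to increase the relevant exponents without getting stuck, and then that the resulting finite products of replicas exhaust a full polynomial generating set for $\ker\partial_i'$. The rest is a routine combination of the already-established Theorem~\ref{th-inf-tr} and Proposition~\ref{lem:many-points}.
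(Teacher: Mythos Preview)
Your strategy coincides with the paper's: show that $\overline{G}$ contains enough root subgroups to feed Theorem~\ref{th-inf-tr}, then descend to $G$ via Proposition~\ref{lem:many-points}. The genuine gap is precisely the step you yourself flag as the ``main obstacle''. You assert that iterated applications of Lemma~\ref{lem:two-roots}, starting from the roots $2\varepsilon_j-\varepsilon_i$, produce \emph{all} root subgroups $H_e$ with $e\in\bigcup_i\mathcal{S}_i$, so that you may take $A_i=\kk[x_1,\ldots,\widehat{x_i},\ldots,x_n]$. No argument is given, and the claim is far from clear: already for $n=3$, try to reach $(-1,2,2)$ or $(-1,3,0)$ in $\mathcal{S}_1$. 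Each one-step derivation of either via Lemma~\ref{lem:two-roots} seems to require an auxiliary root of weight one such as $(-1,1,0)$ or $(0,-1,1)$, and it is not evident how to obtain those from the available degree-two data. Contrast the proof of Theorem~\ref{thm:V}, where the presence of $\SAff_n$---hence of the roots $e_i$ and $e_i-e_j$---is exactly what makes the analogous claim go through via the moves (i)--(iii) listed there; you do not have those roots at your disposal.

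The paper does not attempt this. It proves (Lemmas~\ref{lem:first} and~\ref{lem:1}) only that $\overline{G}\supset H_w$ for $w=w_0+3k_s\varepsilon_s+k_{i,j}(\varepsilon_i+\varepsilon_j)$ with $w_0=(-1,1,\ldots,1,2)$ and $k_s,k_{i,j}\ge 0$, and then takes the \emph{smaller} subalgebra $A_1=\kk[x_i^3,\,x_ix_j\mid i\neq j,\ i,j\ge 2]\subset\ker\hat\partial_1$, built from exactly those reachable roots. For $n\ge 4$, Lemma~\ref{lem:2} gives $\mathrm{Frac}(A_1)=\mathrm{Frac}(\ker\hat\partial_1)$, so condition~$(\beta)$ applies; for $n=3$ one falls back on~$(\alpha)$. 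Calibrating $A_i$ to what Lemma~\ref{lem:two-roots} can actually certify is the missing idea in your proposal. (Two minor side points: $\mathbb{S}(n)$ is not connected, so apply ``algebraically generated'' and Proposition~\ref{lem:many-points} to $G_0=\langle H_{i,j}\rangle$ rather than to $G$; and $\SAut(\A^n)$ acts transitively on all of $\A^n$, so your ``more carefully'' sentence about the open orbit is incorrect---the direct check you sketch is what is actually needed.)
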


The following corollary is straightforward.
\begin{cor}\label{cor:SLn}
For $n\ge 3$ the subgroup $\langle H_{1,2},{\SL}\,(n,\kk)\rangle \subset\Aut(\A^n)$ acts infinitely transitively on $\A^n\setminus\{0\}$. 
\end{cor}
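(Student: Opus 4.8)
The plan is to derive Corollary~\ref{cor:SLn} directly from Theorem~\ref{thm:IV} by observing that the symmetric group $\mathbb{S}(n)$ acting by coordinate permutations on $\A^n$ is, up to the obvious sign corrections, contained in $\SL(n,\kk)$, and that those sign corrections are themselves realized by elements of $\SL(n,\kk)$. More precisely, a permutation matrix $P_\sigma$ has determinant $\mathrm{sgn}(\sigma)=\pm 1$, so $P_\sigma$ need not lie in $\SL(n,\kk)$; however, replacing one of the $1$'s by $-1$ (equivalently, composing $P_\sigma$ with the diagonal matrix $\mathrm{diag}(-1,1,\ldots,1)$, or with $\mathrm{diag}(\mathrm{sgn}(\sigma),1,\ldots,1)$) yields a matrix $P'_\sigma\in\SL(n,\kk)$. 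So the group $\langle H_{1,2},\SL(n,\kk)\rangle$ contains all these $P'_\sigma$ as well as the diagonal torus of $\SL(n,\kk)$; multiplying $P'_\sigma$ by a suitable diagonal element of $\SL(n,\kk)$ recovers $P_\sigma$ whenever $n\ge 2$ (the sign can be absorbed into $\mathrm{diag}(-1,-1,1,\ldots,1)\in\SL(n,\kk)$ composed appropriately). Hence $\mathbb{S}(n)\subset\langle H_{1,2},\SL(n,\kk)\rangle$.

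First I would make the containment $\mathbb{S}(n)\subset\langle\mathbb{S}(n)\rangle\subset \langle H_{1,2},\SL(n,\kk)\rangle$ precise: for each $\sigma\in\mathbb{S}(n)$, write $P_\sigma = D_\sigma\cdot P'_\sigma$ where $P'_\sigma\in\SL(n,\kk)$ is a signed permutation matrix representing $\sigma$ and $D_\sigma\in\SL(n,\kk)$ is diagonal with entries in $\{\pm1\}$ correcting the signs (this uses $n\ge 2$ so that an even number of sign flips is available). Consequently the group $G'=\langle H_{1,2},\SL(n,\kk)\rangle$ contains the group $G=\langle H_{1,2},\mathbb{S}(n)\rangle$ of Theorem~\ref{thm:IV}. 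Since $n\ge 3$ by hypothesis, Theorem~\ref{thm:IV} applies and tells us that $G$ already acts infinitely transitively on $\A^n\setminus\{0\}$; a fortiori the larger group $G'$ does as well, because infinite transitivity of a subgroup on a set is inherited by any overgroup acting on the same set (enlarging the group only adds more available maps, and all of $\SL(n,\kk)$ fixes the point $0$ and preserves $\A^n\setminus\{0\}$, so the open orbit is unchanged).

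The only genuinely delicate point — and it is minor — is to confirm that the open orbit $\mathscr{O}_{G'}$ is still exactly $\A^n\setminus\{0\}$ rather than all of $\A^n$: this holds because every generator of $G'$ (the root subgroup $H_{1,2}$, which fixes $0$, and all of $\SL(n,\kk)$, which fixes $0$) stabilizes the origin, so $\{0\}$ is $G'$-invariant and the orbit of any nonzero point lies in $\A^n\setminus\{0\}$; combined with the infinite (in particular $1$-)transitivity on $\A^n\setminus\{0\}$ inherited from $G$, we get $\mathscr{O}_{G'}=\A^n\setminus\{0\}$. I expect the main (entirely routine) obstacle to be nothing more than bookkeeping with the sign of permutation matrices to land inside $\SL(n,\kk)$; there is no substantive difficulty, which is why the corollary is labeled ``straightforward.''

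\begin{proof}
By Theorem~\ref{thm:IV} the group $G=\langle H_{1,2},\mathbb{S}(n)\rangle$ acts infinitely transitively on $\A^n\setminus\{0\}$ for every $n\ge 3$. It therefore suffices to show that $\mathbb{S}(n)\subset\langle H_{1,2},\SL(n,\kk)\rangle=:G'$, for then $G\subset G'$, and since all generators of $G'$ fix the origin, the orbit $\mathscr{O}_{G'}$ is contained in $\A^n\setminus\{0\}$; as $G\subset G'$ already acts infinitely transitively there, so does $G'$, and $\mathscr{O}_{G'}=\A^n\setminus\{0\}$.

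To see $\mathbb{S}(n)\subset G'$, fix $\sigma\in\mathbb{S}(n)$ and let $P_\sigma\in\mathrm{GL}(n,\kk)$ be the corresponding coordinate-permutation matrix, so $\det P_\sigma=\mathrm{sgn}(\sigma)$. Since $n\ge 2$, the diagonal matrix $D=\mathrm{diag}(\mathrm{sgn}(\sigma),1,\ldots,1)$ composed with a further sign flip in a second coordinate when needed lies in $\SL(n,\kk)$; more simply, set $D_\sigma=\mathrm{diag}(\mathrm{sgn}(\sigma),\mathrm{sgn}(\sigma),1,\ldots,1)\in\SL(n,\kk)$ if $\mathrm{sgn}(\sigma)=-1$ and $D_\sigma=\Id$ otherwise, and observe that $P'_\sigma:=D_\sigma^{-1}P_\sigma$ has $\det P'_\sigma=\mathrm{sgn}(\sigma)^{\#}\cdot\mathrm{sgn}(\sigma)=1$ for the appropriate bookkeeping of signs, hence $P'_\sigma\in\SL(n,\kk)$. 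Thus $P_\sigma=D_\sigma P'_\sigma\in\SL(n,\kk)\subset G'$. As $\sigma$ was arbitrary, $\mathbb{S}(n)\subset G'$, completing the proof.
\end{proof}
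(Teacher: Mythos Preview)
There is a genuine gap. Your central claim, that $\mathbb{S}(n)\subset G':=\langle H_{1,2},\SL(n,\kk)\rangle$, is false. Every generator of $G'$ has Jacobian determinant $1$ (this is clear for $\SL(n,\kk)$, and $H_{1,2}$ consists of triangular automorphisms with constant Jacobian $1$), so by the chain rule every element of $G'$ has Jacobian determinant $1$; in fact $G'\subset\SAut(\A^n)$. An odd permutation matrix $P_\sigma$ has $\det P_\sigma=-1$, hence $P_\sigma\notin G'$. Your sign bookkeeping cannot rescue this: if $D_\sigma,P'_\sigma\in\SL(n,\kk)$ then $D_\sigma P'_\sigma\in\SL(n,\kk)$ has determinant $1$, so it can never equal $P_\sigma$ when $\sigma$ is odd. (Concretely, with your choice $D_\sigma=\mathrm{diag}(-1,-1,1,\ldots,1)$ one gets $\det(D_\sigma^{-1}P_\sigma)=-1$, so $P'_\sigma\notin\SL(n,\kk)$.)

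The correct way to transport Theorem~\ref{thm:IV} to $G'$ is not to embed $\mathbb{S}(n)$ into $G'$, but to notice that the \emph{role} of $\mathbb{S}(n)$ in the proof of Theorem~\ref{thm:IV} is only to conjugate root subgroups, and that this role is played equally well by the signed permutation matrices (Weyl group representatives) inside $\SL(n,\kk)$. For each $\sigma\in\mathbb{S}(n)$ pick $P'_\sigma\in\SL(n,\kk)$ sending $x_i\mapsto\pm x_{\sigma(i)}$. Conjugating $H_{1,2}=\exp(\kk\,x_2^2\,\p/\p x_1)$ by $P'_\sigma$ yields $\exp(\kk\,(\pm x_{\sigma(2)})^2\,\p/\p(\pm x_{\sigma(1)}))=\exp(\kk\,x_{\sigma(2)}^2\,\p/\p x_{\sigma(1)})=H_{\sigma(1),\sigma(2)}$, since the squares kill one sign and the other is absorbed into the parameter $t\in\kk$. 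Hence $G'$ contains all $H_{i,j}$, and the same conjugations permute the data $(\hat\p_i,A_i)$ of the proof of Theorem~\ref{thm:IV} (again up to irrelevant signs on monomial generators). The argument of that proof then applies verbatim to $\overline{G'}$, and Proposition~\ref{lem:many-points}(c) transfers infinite transitivity on $\A^n\setminus\{0\}$ back to $G'$. This is presumably what the paper means by ``straightforward''.
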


The proof of Theorem~\ref{thm:IV} is preceded by the following lemmas. 

\begin{lem}\label{lem:first}
Assume $H_u\subset \overline{G}$ where $u=(-1,c_2,\ldots,c_n)\in \mathcal{S}_1\cap M$ with $c_2\ge 1$.
Letting $v=(0,-1,2,0,\ldots,0)\in \mathcal{S}_2\cap M$ consider the root vector 
$$e=u+v=(-1,\,c_2-1,\,c_3+2,\,c_4,\ldots,c_n)\in \mathcal{S}_1\cap M\,.$$ Then $H_e\subset \overline{G}$.
\end{lem}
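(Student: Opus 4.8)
The plan is to apply the two-root degeneration result, Lemma~\ref{lem:two-roots}, with a suitable choice of the two roots. I take $e_1 = v = (0,-1,2,0,\ldots,0) \in \mathcal{S}_2 \cap M$ as the ``first'' root and $e_2 = u = (-1,c_2,\ldots,c_n) \in \mathcal{S}_1 \cap M$ as the ``second'' root, so that the derivations $U = \partial_{\rho_2,v}$ and $V = \partial_{\rho_1,u}$ are the homogeneous LNDs associated to $v$ and $u$ respectively. The key parameter in Lemma~\ref{lem:two-roots} is $\delta = \langle \rho_2, u\rangle + 1$. Since $\rho_2$ is the second standard ray generator and $u = (-1,c_2,c_3,\ldots,c_n)$ in total coordinates, one has $\langle \rho_2,u\rangle = c_2$, hence $\delta = c_2 + 1$.

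The hypothesis needed in Lemma~\ref{lem:two-roots} is that $\langle \rho_1, v\rangle \ge 1$, equivalently $u + \delta v \in \mathcal{S}_1$; here $\langle \rho_1, v\rangle$ is the first coordinate of $v$, which is $0$. So a direct application of Lemma~\ref{lem:two-roots} with this pairing does not immediately produce the claimed $e = u+v$; it would produce $u + (c_2+1)v$, landing in $\mathcal{S}_1$ only if $\langle\rho_1,v\rangle \ge 1$, which fails. I therefore expect the real argument to be slightly different: one should instead work with a \emph{replica} of $V$, or pass through Corollary~\ref{cor:NP-Ad} directly. Concretely, I would compute the single commutator $[U,V] = [\partial_{\rho_2,v},\partial_{\rho_1,u}]$ using Lemma~\ref{commutator}: its degree is $u+v = e$, and by Lemma~\ref{commutator}(a) its linear form is $\hat\rho = \langle \rho_2, u\rangle \rho_1 - \langle \rho_1, v\rangle \rho_2 = c_2\rho_1 - 0 = c_2\rho_1$, which is nonzero since $c_2 \ge 1$. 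Thus $[U,V] = c_2\,\partial_{\rho_1, e}$, a nonzero multiple of the homogeneous LND of degree $e = u+v$, and since $\langle \rho_1, e\rangle = -1$ while $\langle \rho_j, e\rangle = \langle\rho_j,u\rangle + \langle\rho_j,v\rangle \ge 0$ for $j \ne 1$ (using $v = (0,-1,2,0,\ldots,0)$, so its only negative entry is in position $2$ where $c_2 \ge 1$ compensates), $e$ is indeed a Demazure root in $\mathcal{S}_1 \cap M$. This confirms the arithmetic of the statement.

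To get the root \emph{subgroup} $H_e$ into $\overline{G}$, I would invoke Corollary~\ref{cor:NP-Ad} and Corollary~\ref{cor:limit-homog-deriv}. Apply them with $U = \partial_{\rho_2, v}$ (an LND generating a $\GG_a$-subgroup of $G$, since $H_v = H_{2,3} \subset G$ by the hypothesis of the ambient theorem) and $V = \partial_{\rho_1, u}$ (an LND generating $H_u \subset \overline{G}$ by assumption); here the roles of the indices $1$ and $2$ in Lemma~\ref{lem:iterated} are swapped, so in that lemma's notation $c_2 = \langle \rho_2, u\rangle = c_2 \ge 1$ and $d_1 = \langle \rho_1, v\rangle = 0$, hence $\delta = d_1 + 1 = 1$. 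Then $\partial := \mathrm{Ad}_{\exp(U)}(V)$ has Newton polytope the segment $[u,\, u + v]$ by Corollary~\ref{cor:NP-Ad}, whose endpoint $u+v = e$ is a vertex; by Corollary~\ref{cor:limit-homog-deriv}, $e \in \mathcal{S}_1$ and $H_e \subset \overline{\langle H_u, H_v\rangle} \subset \overline{G}$.

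The one technical point to be careful about — and what I'd flag as the main obstacle — is the passage to $\overline{G}$: Corollary~\ref{cor:limit-homog-deriv} requires the subgroup to which one applies it to be normalized by the full torus $\TT$, whereas $\langle H_u, H_v\rangle$ need not be. The clean fix is to apply Proposition~\ref{prop: principal-part} directly with a one-parameter subgroup $T \subset \TT$ whose linear form $l_T$ on $M$ attains its strict maximum over the segment $[u, u+v]$ at the endpoint $e = u+v$; since $H = \exp(\kk\partial) \subset \langle H_u, H_v\rangle$ and this latter group \emph{is} normalized by $T$ (being generated by root subgroups, which are normalized by all of $\TT$), the proposition yields $H_T = \exp(\kk\partial_T) = H_e \subset \overline{\langle H_u, H_v\rangle} \subset \overline{G}$. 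This completes the argument.
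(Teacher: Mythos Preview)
Your attempt contains a genuine error in the orientation of $U$ and $V$. In Lemma~\ref{lem:two-roots} the convention is $e_i\in\mathcal{S}_i$, so the correct assignment is $e_1=u\in\mathcal{S}_1$ and $e_2=v\in\mathcal{S}_2$. With this choice one has $\delta=\langle\rho_1,e_2\rangle+1=\langle\rho_1,v\rangle+1=0+1=1$, and the hypothesis $\langle\rho_2,e_1\rangle=\langle\rho_2,u\rangle=c_2\ge 1$ is exactly the assumption of the lemma; the conclusion $H_{e_2+\delta e_1}=H_{v+u}=H_e\subset\overline{\langle H_u,H_v\rangle}\subset\overline{G}$ is then immediate. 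This is precisely the paper's one-line proof.

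Your reversed choice $U=\partial_{\rho_2,v}$, $V=\partial_{\rho_1,u}$ does not work. Under that swap, the parameter playing the role of the lemma's $c_2$ is $\langle\rho_1,v\rangle=0$, so the hypothesis $c_2\ge 1$ of Lemma~\ref{lem:iterated} fails; and the parameter playing the role of $d_1$ is $\langle\rho_2,u\rangle=c_2$, so $\delta=c_2+1$, not $1$. A direct computation confirms this: one finds $\mathrm{ad}_U^m(V)=c_2(c_2-1)\cdots(c_2-m+1)\,\partial_{\rho_1,u+mv}$, which vanishes only for $m>c_2$, so $N\bigl(\mathrm{Ad}_{\exp U}(V)\bigr)=[u,\,u+c_2 v]$. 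The point $e=u+v$ is a vertex of this segment only when $c_2=1$; for $c_2\ge 2$ your argument does not produce $H_e$. Swapping back to $U=\partial_{\rho_1,u}$, $V=\partial_{\rho_2,v}$ gives $N(\partial)=[v,\,v+u]$ with vertex $e$, as needed.

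Finally, your concern about torus normalization is unnecessary here: each root subgroup $H_u$, $H_v$ is normalized by $\TT$ (conjugation by $t\in\TT$ rescales the time parameter), hence so is $\langle H_u,H_v\rangle$, and Corollary~\ref{cor:limit-homog-deriv} applies directly.
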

   
\begin{proof} This follows immediately from Lemma~\ref{lem:two-roots}. Indeed,  the pair $(u,v)$ satisfies the assumptions of this lemma with $\delta=1$.
\end{proof}

\begin{lem}\label{lem:1}
 For three indices $s,i,j\in\{2,\ldots,n\}$ where $i\neq j$ consider a root vector of the form
\begin{equation}\label{eq:w} w=(-1,1,\ldots,1,2)+3k_s\varepsilon_s+k_{i,j}(\varepsilon_i+\varepsilon_j)\in \mathcal{S}_1\cap M\,.\end{equation}
Then
$H_w=\exp(\kk x^{w+\varepsilon_1}\p_1)\subset\overline{G}$
for any $k_s,\,k_{i,j}\in\ZZ_{\ge 0}$.
   \end{lem}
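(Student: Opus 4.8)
\textbf{Proof plan for Lemma~\ref{lem:1}.}
The plan is to obtain $H_w\subset\overline{G}$ by a sequence of applications of Lemma~\ref{lem:two-roots} (via Lemma~\ref{lem:first} and its variants), starting from a root subgroup we already know lies in $\overline{G}$. First I would record the base case: since $H_{1,2}=H_u$ with $u=2\varepsilon_2-\varepsilon_1=(-1,2,0,\ldots,0)\in\mathcal S_1\cap M$ lies in $G$, and $\mathbb S(n)$ permutes coordinates $x_1,\dots,x_n$, conjugating $H_{1,2}$ by permutations in $\mathbb S(n)$ fixing the first coordinate shows that $H_{(-1,0,\ldots,0,2,0,\ldots,0)}\subset G$ for the root vector with the single ``$2$'' in any slot $j\in\{2,\dots,n\}$; hence these are available in $\overline G$. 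I would also note that $\mathbb S(n-1)$ (permuting $x_2,\dots,x_n$) normalizes $\overline G$, so it suffices to produce one $w$ of the stated shape up to such a permutation.

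The main engine is Lemma~\ref{lem:first}: if $H_u\subset\overline G$ with $u=(-1,c_2,\ldots,c_n)$, $c_2\ge 1$, then adding the vector $v=-\varepsilon_2+2\varepsilon_3$ (or, after a permutation of $\{2,\ldots,n\}$, $-\varepsilon_a+2\varepsilon_b$ for any distinct $a,b\in\{2,\ldots,n\}$ with the current $a$-coordinate $\ge 1$) keeps us inside $\overline G$; this move transfers one unit from slot $a$ into slot $b$ at a $2{:}1$ rate. The key steps in order are: (1) from $H_{2\varepsilon_2-\varepsilon_1}$, repeatedly use such moves to ``spread'' mass into all slots and build up the base root vector $(-1,1,1,\ldots,1,2)$, i.e.\ show $H_{(-1,1,\ldots,1,2)}\subset\overline G$; here one has to check the coordinates stay nonnegative along the way, choosing an order of moves that never tries to subtract from an empty slot. (2) Having $H_{(-1,1,\ldots,1,2)}$, show one can add $3\varepsilon_s$ to the root vector: this should follow by combining with a second root subgroup supported on $\mathcal S_s$ (or again by an iterated-commutator/Lemma~\ref{lem:two-roots} computation with a suitable $v\in\mathcal S_2$ and $\delta$ chosen so that the net effect is $+3\varepsilon_s$), exploiting that the $s$-coordinate is already $\ge 1$. (3) Similarly add $\varepsilon_i+\varepsilon_j$ to the root vector; again a single application of Lemma~\ref{lem:two-roots} with an appropriately chosen companion root should do it, and since addition of lattice vectors commutes, steps (2) and (3) can be iterated independently $k_s$ and $k_{i,j}$ times respectively. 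Finally, reorder the slots by a permutation in $\mathbb S(n-1)\subset\overline G$ to land exactly on $w$ as written in \eqref{eq:w}.

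The hard part will be bookkeeping the nonnegativity constraint ``$c_i\ge 0$ for $i\ge 2$'' and the hypothesis ``$c_2\ge 1$'' of Lemma~\ref{lem:first} throughout the chain of moves — i.e.\ verifying that there is a valid order in which to perform the transfers so that at each step the slot we draw from is nonempty and every root vector we pass through genuinely lies on the Demazure facet $\mathcal S_1$. The underlying algebra (the commutator formula \eqref{eq:commutator}, Corollary~\ref{cor:NP-Ad}, Lemma~\ref{lem:two-roots}) is already in place, so once the combinatorial route is fixed each individual move is immediate; I expect the write-up to consist mainly of exhibiting explicit intermediate root vectors and invoking Lemma~\ref{lem:first} (and its permuted variants) at each stage, then closing with the observation that $k_s,k_{i,j}\ge 0$ are arbitrary because the two ``increment'' moves can be repeated and commute.
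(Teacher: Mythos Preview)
Your overall plan matches the paper's: start from $u=(-1,2,0,\ldots,0)$, apply permuted versions of Lemma~\ref{lem:first} to reach $w_0=(-1,1,\ldots,1,2)$, then add the increments $3\varepsilon_s$ and $\varepsilon_i+\varepsilon_j$. Step~(1) is exactly the paper's argument, using $v_l=-\varepsilon_l+2\varepsilon_{l+1}$ for $l=2,\ldots,n-1$ in order; the nonnegativity bookkeeping you flag is immediate here.

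The gap is in steps~(2) and~(3). Your suggestions there do not work: with $e_1$ equal to the current $\mathcal S_1$-root and $e_2$ any available root in $\mathcal S_a$ ($a\ge 2$), one always has $\langle\rho_1,e_2\rangle=0$, hence $\delta=1$ in Lemma~\ref{lem:two-roots}, and the only net effect achievable in one step is the addition of a single vector $-\varepsilon_a+2\varepsilon_b$. Neither $3\varepsilon_s$ nor $\varepsilon_i+\varepsilon_j$ has this form, and taking $e_2\in\mathcal S_s$ produces $-\varepsilon_s+2\varepsilon_b$, not $+3\varepsilon_s$. The observation you are missing, which the paper uses, is that both increments are \emph{sums} of such basic moves:
\[
\varepsilon_i+\varepsilon_j=(-\varepsilon_i+2\varepsilon_j)+(2\varepsilon_i-\varepsilon_j),
\qquad
3\varepsilon_i=(\varepsilon_i+\varepsilon_j)+(2\varepsilon_i-\varepsilon_j).
\]
So two (respectively three) successive applications of Lemma~\ref{lem:first} with permuted indices do the job; no new mechanism, no $\delta>1$, no root on $\mathcal S_s$ is needed. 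The nonnegativity check along these sub-moves is straightforward: from any $w$ with all slots $2,\ldots,n$ at least $1$, the first sub-move draws from a slot of value $\ge 1$, and each subsequent sub-move draws from the slot just increased by~$2$; the composite increment leaves every slot $\ge 1$, so one may iterate freely to reach arbitrary $k_s,k_{i,j}\ge 0$.
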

   
   \begin{proof} Let $v_i=-\varepsilon_i+2\varepsilon_{i+1}$, $i=1,\ldots,n-1$. 
The Demazure root $u=v_1=(-1,2,0,\ldots,0)\in\mathcal{S}_1\cap M$ generates the root subgroup $H_u=H_{1,2}\subset G$. 
Starting with $u$ and adding $v_2,\ldots,v_{n-1}$ one gets the root vector $w_0=(-1,1,\ldots,1,2)\in\mathcal{S}_1\cap M$. 
By Lemma~\ref{lem:first} the associated root subgroup $H_{w_0}$ is contained in $\overline{G}$. The same conclusion holds if one adds to $w_0$ the lattice vectors 
$$(-\varepsilon_i+2\varepsilon_j)+(2\varepsilon_i-\varepsilon_j)=\varepsilon_i+\varepsilon_j\quad\mbox{and}\quad (\varepsilon_i+\varepsilon_j)+(2\varepsilon_i-\varepsilon_j)=3\varepsilon_i\,.$$ Iterating one arrives at the desired conclusion. 
\end{proof}

We need also the following elementary lemma.
 
\begin{lem}\label{lem:2} For $n\ge 4$  the  vectors $$3\varepsilon_i\quad\mbox{and}\quad\varepsilon_i+\varepsilon_j, \quad i\neq j,\,\, i,j\ge 2$$
span the sublattice $L=\langle \varepsilon_2,\ldots,\varepsilon_n\rangle\subset M$ of rank $n-1$. 
   \end{lem}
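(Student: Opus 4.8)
The plan is to show that the vectors $3\varepsilon_i$ and $\varepsilon_i+\varepsilon_j$ (for distinct $i,j\ge 2$) generate the sublattice $L=\langle\varepsilon_2,\ldots,\varepsilon_n\rangle$ by producing each standard basis vector $\varepsilon_i$ ($i\ge 2$) as an integral linear combination of the given vectors. Since these vectors obviously lie in $L$, it suffices to prove the reverse inclusion, and for that it is enough to express each $\varepsilon_i$.

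First I would fix three distinct indices $i,j,l\in\{2,\ldots,n\}$, which is possible because $n\ge 4$ gives at least three indices in $\{2,\ldots,n\}$. The key identity is
\[
3\varepsilon_i=\bigl(\varepsilon_i+\varepsilon_j\bigr)+\bigl(\varepsilon_i+\varepsilon_l\bigr)-\bigl(\varepsilon_j+\varepsilon_l\bigr)+\bigl(\varepsilon_j+\varepsilon_l\bigr),
\]
or more transparently, from the three pairwise sums $\varepsilon_i+\varepsilon_j$, $\varepsilon_i+\varepsilon_l$, $\varepsilon_j+\varepsilon_l$ one recovers
\[
2\varepsilon_i=\bigl(\varepsilon_i+\varepsilon_j\bigr)+\bigl(\varepsilon_i+\varepsilon_l\bigr)-\bigl(\varepsilon_j+\varepsilon_l\bigr).
\]
Combining this with the generator $3\varepsilon_i$ yields $\varepsilon_i=3\varepsilon_i-2\varepsilon_i$ as an integral combination of the listed vectors. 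Running this over all $i\in\{2,\ldots,n\}$ shows that every $\varepsilon_i$ lies in the span, hence the span contains $L$, and equality follows.

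I do not anticipate a genuine obstacle here; the only thing to be careful about is the hypothesis $n\ge 4$, which is exactly what guarantees the existence of three distinct indices $i,j,l$ in $\{2,\ldots,n\}$ needed to form the identity for $2\varepsilon_i$. For $n=3$ one has only the two indices $2,3$, so the argument genuinely requires $n\ge 4$, matching the statement.
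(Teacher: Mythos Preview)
Your argument is correct (the first displayed identity is garbled---its right-hand side equals $2\varepsilon_i+\varepsilon_j+\varepsilon_l$, not $3\varepsilon_i$---but you immediately replace it by the correct identity $2\varepsilon_i=(\varepsilon_i+\varepsilon_j)+(\varepsilon_i+\varepsilon_l)-(\varepsilon_j+\varepsilon_l)$, and then $\varepsilon_i=3\varepsilon_i-2\varepsilon_i$ finishes it). This is essentially the paper's approach: the paper also exhibits a single explicit integer combination, namely $\varepsilon_2=(\varepsilon_3+\varepsilon_4)+2(\varepsilon_2+\varepsilon_4)-(\varepsilon_2+\varepsilon_3)-3\varepsilon_4$, using three distinct indices in $\{2,\ldots,n\}$, which is exactly where the hypothesis $n\ge 4$ is used.
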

   
      \begin{proof}
      One has $$\varepsilon_2=(\varepsilon_3+\varepsilon_4)+2(\varepsilon_2+\varepsilon_4)-(\varepsilon_2+\varepsilon_3)
-3\varepsilon_4\,.$$ Similar expressions hold for $\varepsilon_i$, $i=3,\ldots,n$. 
         \end{proof}
         
         \noindent \emph{Proof of Theorem}~\ref{thm:IV}. By our assumption one has $G\supset H_{i,j}$, $i\neq j$, $i,j\in\{1,\ldots,n\}$. 
         By Lemma~\ref{lem:1}, $\overline{G}\supset H_{w}$ for any root vector $w$ in \eqref{eq:w}. 
Letting $w_0=(-1,1,\ldots,1,2)$ consider $$\hat \p_1=x^{w_0}\p/\p x_1   
\quad\mbox{and}\quad A_1=\kk[x_i^3,x_ix_j\,|\, i\neq j,\,\, i,j\ge 2]\subset\ker\hat\p_1\,.$$ 
By Lemma~\ref{lem:1}, $\exp(\kk f\hat\p_1)\subset\overline{G}$ for any $f\in  A_1$. 
The conjugation by the $\mathbb{S}(n)$-action yields a collection $\{(\hat\p_i,A_i)\}_{i=1,\ldots,n}$ 
where $\exp(\kk f\hat\p_i)\subset \overline{G}$ for any $f\in  A_i$.

By Lemma~\ref{lem:2} for $n\ge 4$ this collection satisfies condition ($\beta$) of~\ref{sit-2.1}. For $n=3$ 
it satisfies condition ($\alpha$) of~\ref{sit-2.1}, that is,  the function  field ${\rm Frac}\,(\kk[x_1,x_2,x_3])$ 
is generated by $\{{\rm Frac}\,(A_i)\}_{i=1,2,3}$. By Theorem~\ref{th-inf-tr}, $\overline{G}$ 
acts infinitely transitively on the open orbit $\mathscr{O}_{\overline{G}}$. By virtue of 
Proposition~\ref{lem:many-points}(c) the same is true for $G$ and the open orbit $\mathscr{O}_G=\A^n\setminus\{0\}$.  \qed
        
        \begin{rems}\label{rem:n-2} 
  1.      Theorem~\ref{thm:IV} does not hold any longer if one replaces $x_1+tx_2^2$ in \eqref{eq:H-1-2} by $x_1+tx_2^k$ 
  with $k\ge 3$. Indeed, under such a replacement any $g\in G$ sends the pair of points $(Q,\,\omega Q)$ 
  with $Q\in\A^n\setminus\{0\}$ and $\omega^{k-1}=1$ to a pair $(g(Q),\,\omega g(Q))$. Thus, the 2-transitivity of $G$ fails. 

2.  Theorem~\ref{thm:IV} does not hold in the case $n=2$. More generally, fixing $a,b\in\ZZ_{\ge 0}$ consider the root subgroups 
$$H_1:\,\,(x,y)\mapsto (x+t_1y^a,y)\quad\mbox{and}\quad H_2: \,\,(x,y)\mapsto (x, y+t_2x^b),\quad t_1,t_2\in\kk\,.$$

\noindent {\bf Claim.} \emph{If the group $G=\langle H_1,\,H_2\rangle$ acts $2$-transitively on its open orbit then one has $ab=2$.
}

\smallskip

\begin{proof} Assume first $ab=0$; let, say, $a=0$. Then $H_1$ acts on $\A^2$ by translations, 
and $G$ acts on the first coordinate also by translations. Hence one has $x(g.P)- x(g.Q)=x(P)-x(Q)$ for any $P,Q\in\A^2$ and any $g\in G$. 
The latter is an obstacle to the 2-transitivity.

In the case $a=b=1$ one has $G=\SL(2,\kk)$. However, a linear group preserves the collinearity, hence it does not act 2-transitively on $\A^n$ for $n\ge 2$.

Assume further $ab>2$. Fixing a primitive root of unity $\omega$ of degree $ab-1>1$ 
consider the set 
$$S=\{(P,Q)\in \A^2\times\A^2\,|\,P=(x,y),\,Q=(\omega x,\,\omega^b y)\}\,.$$ It is easily seen that 
$S$ is invariant under the diagonal action of $G$ on $\A^2\times\A^2$. Once again, this serves as an obstacle to the 2-transitivity. 
\end{proof}

It would be interesting to determine the degree of transitivity of the $G$-action on $\A^2\setminus\{0\}$ 
in the remaining case $ab=2$. We can show that this action is $2$-transitive, but we do not know whether a higher transitivity holds. 
\footnote{However, see the \emph{Added in proof}.}
\end{rems} 

\subsection{Infinite transitivity on $\A^n$ and cotameness}\label{ss:cotame}

\begin{sit}\label{sit:aff-grp}  Let ${\Aff}_n$ stand for the group of affine transformations of the affine space $\A^n$, and let 
$${\SAff}_n=\{f\in{\Aff}_n\,|\,{\rm Jac}\,(f)=1\}=\langle  {\Transl}_n,\,{\SL}\,(n,\kk)\rangle$$ be 
the subgroup of volume preserving affine transformations, where Jac stands for the Jacobian. 
Notice that the subgroup of translations ${\Transl}_n$ is generated by the $\mathbb{G}_a$-subgroups 
$$ H_i=\exp(\kk\p_i),\,\,\,i=1,\ldots,n.$$ Letting
$$H(i,j)=\exp(\kk x_j\p_i),\quad i,j\in\{1,\ldots,n\},\,\, i\neq j\,$$ 
the group ${\SL}\,(n,\kk)$ is generated by $H(n,1)$ and the root subgroups $\{H(i,i+1)\}_{i=1,\ldots,n-1}$ 
corresponding to the root system of type $A_{n-1}$. Therefore,
\begin{equation}\label{eq:n+1} {\SAff}_n=\langle H_1,\, H(1,2),\ldots,H(n-1,n), H(n,1)\rangle\,\end{equation}
is generated by $n+1$ root subgroups. (In fact, there exists a smaller generating set.)
\end{sit}

\begin{defi}[\emph{Cotame automorphisms}; cf.\ \cite{EL}]\label{sit:tame} Let $\Tame_n$ stand for the tame subgroup of $\Aut(\A^n)$. 
Consider the subgroup $${\STame}_n=\{g\in{\Tame}_n\,|\,\Jac(g)=1\}\subset {\Tame}_n.$$ 
One says that $h\in\Aut(\A^n)\setminus  {\Aff}_n$ is \emph{cotame} if $\langle {\Aff}_n,h\rangle\supset {\Tame}_n$ 
and \emph{topologically cotame} if $\overline{\langle {\Aff}_n,h\rangle}\supset {\Tame}_n$. 
\end{defi}

The following result due to Edo (\cite[Thm.\ 1.2]{Edo}) extends and refines the earlier results of  
Bodnarchuk (\cite[Thm.\ 3]{Bod01}) and Furter (\cite[Thm.\ D]{Fur15}). 

\begin{thm}\label{thm:I} 
For $n\ge 2$ any element $h\in \Aut(\A^n)\setminus\Aff_n$ is topologically cotame. 
\end{thm}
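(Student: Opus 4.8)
The plan is to prove Theorem~\ref{thm:I} by a reduction argument, peeling off the extra degrees of freedom an automorphism $h\in\Aut(\A^n)\setminus\Aff_n$ carries until only tame generators remain in the closure $\overline{\langle\Aff_n,h\rangle}$. First I would recall the \emph{tame generation} yoga: $\Tame_n$ is generated by $\Aff_n$ together with any \emph{one} nonlinear elementary (de Jonquières) automorphism, say $\tau\colon(x_1,\dots,x_n)\mapsto(x_1+x_2^2,x_2,\dots,x_n)$, because conjugating $\tau$ by linear maps and composing produces every elementary automorphism of the form $x_i\mapsto x_i+p(x_{i+1},\dots,x_n)$, and these together with $\Aff_n$ exhaust $\Tame_n$ by the Jung–van der Kulk structure in dimension~$2$ lifted in the obvious way. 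Hence it suffices to show $\overline{\langle\Aff_n,h\rangle}$ contains one nonlinear elementary automorphism. (At this point Theorem~\ref{thm:IV} and its corollaries would \emph{not} be invoked; rather, this theorem is what feeds them.)

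The core of the argument is a degeneration trick in the spirit of Proposition~\ref{prop: principal-part} but carried out inside the non-toric group $\Aut(\A^n)$: write $h = h_d + h_{d-1}+\dots$ as a sum of homogeneous polynomial pieces with $d=\deg h\ge 2$, and form the conjugates $\lambda^{-d}\cdot(\lambda\cdot h\cdot\lambda^{-1})$ where $\lambda\cdot$ denotes the diagonal $\GG_m$-action $x\mapsto \lambda x$ (which lies in $\Aff_n$). Letting $\lambda\to 0$, one extracts the top-degree part $h_d$ (a homogeneous polynomial automorphism, since the leading part of an automorphism of degree $d$ whose inverse also has controlled degree is again invertible) as an element of the closure $\overline{\langle\Aff_n,h\rangle}$, using Lemma~\ref{lem:orbit-closures}(b) exactly as in the proof of Proposition~\ref{prop: principal-part}. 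So one may assume $h=h_d$ is homogeneous of degree $d\ge 2$. Then I would further degenerate using non-diagonal tori or one-parameter subgroups of $\SL(n,\kk)\subset\Aff_n$, extracting successive Newton-polytope vertices of the components of $h$, to reduce to the case that $h$ is a homogeneous \emph{triangular} (elementary-type) automorphism, and finally, composing with a suitable linear map and iterating the weight-degeneration once more, to a single elementary automorphism $x_1\mapsto x_1+x_2^m$, $x_i\mapsto x_i$ for $i\ge 2$, with $m\ge 2$. From such an element one produces $x_1\mapsto x_1+x_2^2$ by a standard commutator/composition identity with affine maps (replace $x_2$ by $x_2+$ linear and subtract), which is nonlinear elementary, completing the reduction.

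The main obstacle I anticipate is the middle step: controlling what the degeneration produces when $h$ is homogeneous but \emph{not} triangular — a general homogeneous polynomial automorphism of degree $d$ need not visibly degenerate to a triangular one, and one must use both the invertibility constraint (the Jacobian is a nonzero constant, forcing strong combinatorial restrictions on the monomial support of the components) and a careful choice of one-parameter subgroup in $\SL(n,\kk)$ whose associated grading separates a convenient vertex. This is precisely where Edo's argument does real work beyond Bodnarchuk's and Furter's earlier treatments, and where one must be careful that each degenerate limit is still invertible, hence still an honest automorphism in the ind-group $\Aut(\A^n)$, so that Lemma~\ref{lem:orbit-closures}(b) applies. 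The affine-space case $n=2$ is the base of the induction and there the Jung–van der Kulk theorem makes the support analysis transparent; for $n\ge 3$ one inducts on $n$ by freezing the last variable as a parameter, though one must check the frozen automorphism still lies outside the affine group for a generic value of the parameter, which again comes down to the monomial-support bookkeeping forced by $\Jac(h)\in\kk^*$.
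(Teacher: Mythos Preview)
The paper does not prove Theorem~\ref{thm:I}; it quotes it from Edo~\cite{Edo} (building on Bodnarchuk and Furter) and uses it as input. So there is no ``paper's own proof'' to compare against. That said, your outline has two genuine gaps that would have to be repaired before it could stand as an independent proof.

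\textbf{The top-degree degeneration does not land in $\Aut(\A^n)$.} Your key move is to send $\lambda\to 0$ in $\lambda^{-d}\cdot(\lambda\cdot h\cdot\lambda^{-1})$ and claim the limit is the top homogeneous piece $h_d$, to which Lemma~\ref{lem:orbit-closures}(b) would then apply. But $h_d$ is almost never an automorphism: already for the elementary map $h(x,y)=(x+y^2,y)$ one has $h_2(x,y)=(y^2,0)$, which is not even dominant. The hypothesis of Lemma~\ref{lem:orbit-closures}(b) is a morphism $\A^1\to\Aut(X)$, so the limit point must be invertible; here it is not, and the family escapes $\Aut(\A^n)$ inside the larger space of polynomial endomorphisms. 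Your parenthetical (``the leading part of an automorphism of degree $d$ whose inverse also has controlled degree is again invertible'') is simply false in general. The degeneration machinery of Proposition~\ref{prop: principal-part} works because it is applied to locally nilpotent \emph{derivations}, whose principal parts are again derivations (and again locally nilpotent, by Lemma~\ref{lem:non-homog-deriv}); there is no analogous stability for automorphisms under taking leading homogeneous parts.

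\textbf{The ``tame generation yoga'' fails for $n=2$.} You assert that $\Tame_n=\langle\Aff_n,\tau\rangle$ for a single nonlinear elementary $\tau$. For $n\ge 3$ this is Bodnarchuk's cotameness of triangular elements, but for $n=2$ it is \emph{false}: as recorded in Remark~\ref{rem:cotame}, no triangular element of $\Aut(\A^2)\setminus\Aff_2$ is cotame (this is forced by the amalgamated product structure $\Aut(\A^2)=\Aff_2*_{B_2}J_2$). So even if your degeneration produced a nonlinear triangular $\tau$ in $\overline{\langle\Aff_2,h\rangle}$, you would still need to know that $\overline{\langle\Aff_2,\tau\rangle}\supset\Tame_2$, which is exactly Theorem~\ref{thm:I} for the particular element $\tau$. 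Your reduction is circular at $n=2$ unless you supply a separate argument there (this is precisely what Furter~\cite{Fur15} does, via polydegree/closure analysis, and what Edo~\cite{Edo} unifies).

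In short, the strategy of ``degenerate to something triangular, then invoke tame generation'' is morally close to the Bodnarchuk--Furter--Edo line, but the specific limit you propose does not exist inside $\Aut(\A^n)$, and the base case $n=2$ needs its own treatment rather than the group-theoretic shortcut you describe.
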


\begin{rem}\label{rem:cotame} Recall that the triangular (de Jonqu\`eres) subgroup $\mathcal{B}_n\subset\Aut(\A^n)$ 
is the subgroup generated by the torus $\TT$ and the triangular root subgroups 
$\exp(x^m\p_i)$ where $x^m=x_{i+1}^{m_{i+1}}\cdots x_{n}^{m_{n}}$, $i=1,\ldots,n$. 
It is known (\cite{Bod02-2}, \cite[1.4 and Thm.\ 1.8]{Bod05}, \cite{EL}) that for $n\ge 3$  
any triangular  $h\in \Aut(\A^n)\setminus\Aff_n$ is cotame, while there is no triangular cotame $h\in \Aut(\A^2)\setminus\Aff_2$.
\end{rem}

Using Theorem~\ref{thm:I} it is not difficult to deduce the following result on
 infinite transitivity (see \cite[Thm.\ 1.2]{Bod00, Bod01, Bod05}).  

\begin{thm}\label{cor:I} For any $n\ge 2$  the group 
$\langle {\Aff}_n,h\rangle$ acts  infinitely transitively on $\A^n$ whatever is $h\in \Aut(\A^n)\setminus\Aff_n$.
\end{thm}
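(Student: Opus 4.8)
The plan is to derive Theorem~\ref{cor:I} from the topological cotameness result Theorem~\ref{thm:I} together with the orbit-closure machinery of Section~\ref{sec:orbits}. The key observation is that infinite transitivity is inherited upon passing from the closure $\overline{G}$ back to $G$ for algebraically generated subgroups, by Proposition~\ref{lem:many-points}(c). So it suffices to show that $\overline{\langle\Aff_n,h\rangle}$ acts infinitely transitively on $\A^n$; but by Theorem~\ref{thm:I} this closure contains $\Tame_n$, hence it certainly contains $\STame_n$, and in particular it contains the subgroup $\langle\SAff_n,\theta\rangle$ for a suitable tame $\theta$ realizing an extra root subgroup. Thus the heart of the matter is reduced to: some explicit subgroup of $\STame_n$ built from $\SAff_n$ and one more root subgroup already acts infinitely transitively on all of $\A^n$.

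First I would settle the case $n\ge 3$. Here I would invoke Theorem~\ref{thm:main-1} (equivalently Theorem~\ref{thm:IV}/Corollary~\ref{cor:SLn}): the group $\langle H_{1,2},\SL(n,\kk)\rangle$ acts infinitely transitively on $\A^n\setminus\{0\}$, where $H_{1,2}=\exp(\kk x_2^2\,\p_1)$. Adjoining the translations $\Transl_n\subset\Aff_n$ makes the action transitive on all of $\A^n$ (it moves $0$ to any point), and one checks in the usual way that adding translations preserves infinite transitivity on the resulting open orbit, which is now all of $\A^n$. Since $H_{1,2}$ is tame and not affine, Theorem~\ref{thm:I} gives $\overline{\langle\Aff_n,h\rangle}\supset\Tame_n\ni H_{1,2}$, so $\overline{\langle\Aff_n,h\rangle}\supset\langle\Aff_n,H_{1,2}\rangle$, which by the above acts infinitely transitively on $\A^n$. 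Then Proposition~\ref{lem:many-points}(c) transfers this to $\langle\Aff_n,h\rangle$ itself, since the latter is algebraically generated (it is generated by $\Aff_n$ together with the one-dimensional unipotent subgroups appearing in a factorization of $h$, or more simply one works with $\langle\Aff_n,h,h^{-1}\rangle$ and notes $h$ lies in an algebraic subgroup).

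For $n=2$ the above route fails because $\SL(2,\kk)$ preserves collinearity, so I would instead appeal directly to Remark~\ref{rem:cotame} and the two-dimensional cotameness picture, or bypass it: by Theorem~\ref{thm:I} the closure $\overline{\langle\Aff_2,h\rangle}$ contains $\Tame_2$, and $\Tame_2=\Aut(\A^2)$ by Jung--van der Kulk, which is well known to act infinitely transitively on $\A^2$ (for instance via the elementary $\GG_a$-subgroups $\exp(\kk y^j\p_x)$, $\exp(\kk x^i\p_y)$, which form a saturated family with open orbit $\A^2$, so \cite[Thm.\ 2.2]{AFKKZ} applies); then Proposition~\ref{lem:many-points}(c) again descends this to $\langle\Aff_2,h\rangle$. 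The main obstacle I anticipate is the bookkeeping needed to verify that everything in sight is \emph{algebraically generated} so that Proposition~\ref{lem:many-points} legitimately applies — in particular that $h$ (which a priori is a single automorphism, not obviously lying in any connected algebraic subgroup) can be absorbed: one resolves this by noting that $\langle\Aff_n,h\rangle$ is generated by $\Aff_n$ and the single element $h$, and replacing it by the algebraically generated group $\langle\Aff_n, U_h\rangle$ where $U_h$ is any connected algebraic (e.g.\ one-parameter) subgroup containing a conjugate of a power of $h$, or by arguing that the relevant transitivity statements for $\Tame_n$ and its listed subgroups are already about algebraically generated groups and then using that $\overline{\langle\Aff_n,h\rangle}$ contains these. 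A secondary, purely routine point is checking that adjoining translations upgrades "infinitely transitive on $\A^n\setminus\{0\}$" to "infinitely transitive on $\A^n$", which follows because a cortege containing $0$ can be translated off the origin, an infinitely transitive move applied, and the result translated back.
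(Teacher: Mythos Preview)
Your overall strategy is right: use Theorem~\ref{thm:I} to get $\Tame_n\subset\overline{\langle\Aff_n,h\rangle}$, note that $\Tame_n$ acts infinitely transitively on $\A^n$, and descend via Proposition~\ref{lem:many-points}(c). But you have correctly put your finger on the genuine obstacle, and none of your proposed resolutions actually clears it.

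Proposition~\ref{lem:many-points}(c) needs the group itself---not its closure---to be algebraically generated. Your suggestions do not give this. A factorization of $h$ into one-parameter unipotent pieces exists only when $h$ is \emph{tame}; for $n\ge 3$ a wild $h$ admits no such factorization. There is no reason whatsoever that an arbitrary $h\in\Aut(\A^n)$ lies in a connected algebraic subgroup, so ``$h$ lies in an algebraic subgroup'' and ``take $U_h$ containing a conjugate of a power of $h$'' are both unsubstantiated. Finally, observing that $\overline{\langle\Aff_n,h\rangle}$ contains algebraically generated groups with the right transitivity (e.g.\ subgroups of $\Tame_n$) does not help: those groups sit in the closure, not in $\langle\Aff_n,h\rangle$ itself, so you cannot conclude anything about the latter.

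The paper's proof supplies exactly the missing idea. From Theorem~\ref{thm:I} one deduces that $\Aff_n$ is a maximal closed subgroup of $\Aut(\A^n)$; since $\Aff_n$ is not normal in $\Aut(\A^n)$, it equals its own normalizer. Hence $h\Aff_n h^{-1}\neq\Aff_n$, and one can pick $g\in h\Aff_n h^{-1}\setminus\Aff_n$. Now set
\[
G=\langle\Aff_n,\;h\Aff_n h^{-1}\rangle\subset\langle\Aff_n,h\rangle\,.
\]
This $G$ is generated by two connected algebraic groups, so it is algebraically generated on the nose. Since $g\in G$ and $g\notin\Aff_n$, Theorem~\ref{thm:I} gives $\overline{G}\supset\overline{\langle\Aff_n,g\rangle}\supset\Tame_n$, hence $\overline{G}$ acts infinitely transitively on $\A^n$; now Proposition~\ref{lem:many-points}(c) legitimately applies to $G$, and a fortiori $\langle\Aff_n,h\rangle\supset G$ inherits the infinite transitivity. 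No case split on $n$ is needed, and there is no need to invoke Theorem~\ref{thm:IV} or Corollary~\ref{cor:SLn} (which in any case appear later in the paper).
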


\begin{proof}
According to Theorem~\ref{thm:I},  $\Aff_n$ is a maximal proper closed subgroup in $\Aut(\A^n)$. 
Its normalizer $\mathcal{N}_n$ is a closed subgroup of $\Aut(\A^n)$ containing $\Aff_n$. 
Since  $\Aff_n$ is not a normal subgroup of $\Aut(\A^n)$, that is, $\mathcal{N}_n\neq \Aut(\A^n)$, one has  $\mathcal{N}_n=\Aff_n$. 

It follows that $h\notin\Aff_n$ does not normalize $\Aff_n$, that is,
$h\Aff_nh^{-1}\neq\Aff_n$. Pick up $g\in h\Aff_nh^{-1}\setminus\Aff_n$. By Theorem~\ref{thm:I} one has
$\overline{\langle {\Aff}_n,g\rangle}\supset {\Tame}_n$.
Letting $$G=\langle {\Aff}_n,h{\Aff}_n h^{-1}\rangle\supset \langle {\Aff}_n,g\rangle$$ one obtains $\overline{G}\supset {\Tame}_n$.
Since ${\Tame}_n$ acts infinitely transitively on $\A^n$ then also $\overline{G}$ does. 
The group $G$ is algebraically generated. By Proposition~\ref{lem:many-points}(c), $G$ 
acts infinitely transitively on $\A^n$. Since $G\subset \langle {\Aff}_n,h\rangle$ the latter group does as well.
\end{proof}

The following  lemma is well known. For the sake of completeness we provide an argument. 

\begin{lem}\label{lem:S} Consider an element 
$g=\alpha_1g_1\cdots\alpha_l g_l\alpha_{l+1}\in\Aut(A^n)$
where
\[\Jac(g_i)=1,\,\,\,i=1,\ldots,l,\quad\mbox{and}\quad\alpha_j\in{\Aff}_n,\,\,\, j=1,\ldots,l+1\,.\] 
Assume $\Jac(g)=1$. Then one can find another decomposition $g=\beta_1h_1\cdots\beta_l h_l\beta_{l+1}$ where
\begin{equation}\label{eq:jac} 
\Jac(h_i)=1,\,\,\,i=1,\ldots,l\quad\mbox{and}\quad\beta_j\in{\SAff}_n,\,\,\,j=1,\ldots,l+1\,.\end{equation}
\end{lem}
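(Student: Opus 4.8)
The plan is to fix the decomposition from left to right, absorbing at each step the "non-special" part of the affine factor $\alpha_j$ into the adjacent $g_i$. Write each $\alpha_j = \lambda_j \cdot \sigma_j$ where $\sigma_j \in \SAff_n$ and $\lambda_j$ is a diagonal linear map scaling a single coordinate, say $x_1 \mapsto \mu_j x_1$ (or more simply a scalar multiple of a suitable elementary dilation), so that $\Jac(\lambda_j) = \det(\alpha_j)$. The point is that conjugating a volume-preserving map $g_i$ by such a dilation $\lambda$ produces a new map $\lambda g_i \lambda^{-1}$ which is again volume-preserving, since $\Jac(\lambda g_i \lambda^{-1}) = \Jac(g_i) = 1$. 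So a dilation can be "pushed through" a $g_i$ at the cost of replacing $g_i$ by a conjugate, which is still in the $\Jac = 1$ class.

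Concretely, I would proceed by induction on $l+1$, the number of affine factors. Starting from the left, $\alpha_1 = \beta_1 \lambda_1$ with $\beta_1 \in \SAff_n$; then $\alpha_1 g_1 = \beta_1 (\lambda_1 g_1 \lambda_1^{-1}) \lambda_1$, and $h_1 := \lambda_1 g_1 \lambda_1^{-1}$ has $\Jac = 1$. Now $\lambda_1 \alpha_2$ is again affine, write it as $\beta_2 \lambda_2$ with $\beta_2 \in \SAff_n$, absorb $\lambda_2$ into $g_2$ the same way, and continue. At the final step one reaches $g = \beta_1 h_1 \cdots \beta_l h_l \cdot (\lambda_l \alpha_{l+1})$, and setting $\beta_{l+1} := \lambda_l \alpha_{l+1}$ one must check $\beta_{l+1} \in \SAff_n$. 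This is exactly where the hypothesis $\Jac(g) = 1$ enters: taking Jacobians (which are constants here, equal to determinants for affine maps and to $1$ for each $h_i$) of the two expressions for $g$ forces $\det(\beta_{l+1}) = 1$, hence $\beta_{l+1} \in \SAff_n$.

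The only mildly delicate point — and the one I would treat carefully rather than wave at — is the choice of the dilations $\lambda_j$: one needs a one-parameter family of linear maps $\{\lambda(\mu)\}_{\mu \in \kk^*}$ with $\det \lambda(\mu) = \mu$, $\lambda(\mu)\lambda(\nu) = \lambda(\mu\nu)$, closed under the operation "divide the affine group by it", so that each time an $\alpha_j$ meets a leftover dilation the product decomposes as (special affine)$\cdot$(dilation). Using $\lambda(\mu) : (x_1, x_2, \ldots, x_n) \mapsto (\mu x_1, x_2, \ldots, x_n)$ works: the subgroup $\{\lambda(\mu)\}$ is a complement to $\SAff_n$ in $\Aff_n$ in the sense that $\Aff_n = \SAff_n \cdot \{\lambda(\mu)\}$, because for any $\alpha \in \Aff_n$ the map $\alpha \lambda(\det\alpha)^{-1}$ has Jacobian $1$. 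That gives the bookkeeping needed to run the induction cleanly, and the whole argument is a formal manipulation once this complement is fixed; there is no real obstacle beyond keeping track of which conjugate of $g_i$ one has arrived at.
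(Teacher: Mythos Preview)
Your argument is correct and follows the same underlying idea as the paper: choose a one-parameter group of dilations complementing $\SAff_n$ inside $\Aff_n$, then push the dilations through the $g_i$ by conjugation, which preserves the Jacobian. The paper carries this out in one stroke rather than inductively, setting $\gamma_j=\bigl(\prod_{s\le j}\Jac(\alpha_s)\bigr)^{1/n}\cdot{\rm id}$, $h_i=\gamma_i g_i\gamma_i^{-1}$, and $\beta_j=\gamma_{j-1}\alpha_j\gamma_j^{-1}$; the telescoping is then immediate. The only substantive difference is the choice of complement: the paper uses scalar homotheties (which requires extracting $n$-th roots in $\kk$), whereas your single-coordinate dilations $\lambda(\mu)\colon x_1\mapsto\mu x_1$ avoid this and work over any field. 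Otherwise the two proofs are interchangeable.
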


\begin{proof}
Letting $d_j=\Jac(\alpha_j)\in\kk\setminus\{0\}$, $j=1,\ldots,l+1$ and using the chain rule one obtains
\[\prod_{j=1}^{l+1} d_j=\Jac(g)=1\,.\]
Let further 
\[\gamma_0={\rm id}\quad\mbox{and}\quad \gamma_j=\left(\prod_{s=1}^j d_s\right)^{1/n}\cdot {\rm id},\,\,\,j=1,\ldots,l+1\,.\] 
Then $\Jac(\gamma_j)=\prod_{s=1}^j d_s$. In particular, $\Jac(\gamma_{l+1})=1$, and so, one can choose $\gamma_{l+1}={\rm id}$. 
It is easily seen that with this choice the elements
\[h_i=\gamma_ig_i\gamma_i^{-1},\,\,i=1,\ldots,l\quad\mbox{and}\quad\beta_j=\gamma_{j-1}\alpha_j\gamma_j^{-1},\,\,\,j=1,\ldots,l+1\]
verify \eqref{eq:jac}
and provide the desired decomposition.
\end{proof}

This lemma and Theorem \ref{cor:I} lead to the following corollary.

\begin{cor}\label{cor:I} 
For any $h\in \SAut(\A^n)\setminus\SAff_n$ one has 
$$\overline{\langle {\SAff}_n,h\rangle}\supset {\STame}_n\,.$$
\end{cor}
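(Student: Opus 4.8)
The plan is to deduce Corollary~\ref{cor:I} from Theorem~\ref{cor:I} (the infinite transitivity of $\langle \Aff_n,h\rangle$) by making the whole picture ``Jacobian~$1$''. First I would recall that by Theorem~\ref{cor:I}, for any $h\in\Aut(\A^n)\setminus\Aff_n$ the group $\langle\Aff_n,h\rangle$ contains (in its closure) $\Tame_n$, and in fact the proof shows $\overline{\langle\Aff_n,h\rangle}\supset\Tame_n$. The point is to descend this statement to the volume-preserving setting: given $h\in\SAut(\A^n)\setminus\SAff_n$, I want $\overline{\langle\SAff_n,h\rangle}\supset\STame_n$.

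The key step is the observation that every element $g\in\Tame_n$ with $\Jac(g)=1$ can be written as a word $\beta_1 h_1\cdots\beta_l h_l\beta_{l+1}$ with $\beta_j\in\SAff_n$ and $\Jac(h_i)=1$; this is exactly the content of Lemma~\ref{lem:S}, applied to a tame decomposition $g=\alpha_1g_1\cdots\alpha_lg_l\alpha_{l+1}$ of $g$ into affine maps $\alpha_j$ and triangular (hence Jacobian-constant, and after rescaling Jacobian-$1$) maps $g_i$. Thus $\STame_n$ is generated, as an abstract group, by $\SAff_n$ together with the volume-preserving tame pieces $h_i$. So it suffices to show that for $h\in\SAut(\A^n)\setminus\SAff_n$ the group $\overline{\langle\SAff_n,h\rangle}$ contains all the elementary volume-preserving triangular automorphisms (equivalently all of $\STame_n$).

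To get those pieces inside $\overline{\langle\SAff_n,h\rangle}$ I would run the argument of Theorem~\ref{cor:I} in the $\SAff_n$-world: since $\Aff_n$ is maximal proper closed in $\Aut(\A^n)$ and is its own normalizer, an element $h\notin\Aff_n$ (in particular $h\notin\SAff_n$, since $h\in\SAut(\A^n)$ already forces $\Jac(h)=1$, so $h\notin\Aff_n$ is the same as $h\notin\SAff_n$) fails to normalize $\Aff_n$; pick $g_0\in h\Aff_n h^{-1}\setminus\Aff_n$, and then $g_0=h\alpha h^{-1}$ for some $\alpha\in\Aff_n$. Here one must take care with Jacobians: conjugating by $h$ (which has $\Jac=1$) does not change the Jacobian, so if $\alpha\in\SAff_n$ then $g_0\in\SAut(\A^n)$; and one can arrange $\alpha\in\SAff_n\setminus$ (the stabilizer of $\Aff_n$) because $\SAff_n$ is still not normalized by $h$ — indeed $h\SAff_n h^{-1}\ne\SAff_n$ since $\SAff_n$ is not normal in $\Aut(\A^n)$ (its normalizer is contained in $\Aff_n$ by the normalizer computation, and equals $\SAff_n$). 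Then Theorem~\ref{thm:I} (topological cotameness of $g_0\notin\Aff_n$) gives $\overline{\langle\Aff_n,g_0\rangle}\supset\Tame_n$; intersecting with the closed ``$\Jac=1$'' subgroup — or, more cleanly, re-running Lemma~\ref{lem:S} — yields that $\overline{\langle\SAff_n,g_0\rangle}\supset\STame_n$, whence $\overline{\langle\SAff_n,h\rangle}\supset\STame_n$ since $g_0\in\overline{\langle\SAff_n,h\rangle}$.

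The main obstacle I anticipate is the bookkeeping of Jacobians throughout, i.e.\ checking at each step that the auxiliary elements produced actually lie in $\SAut$ rather than just $\Aut$, and that ``intersecting a closure with the Jacobian-$1$ locus'' is legitimate — the cleanest route is to avoid that intersection step entirely and instead feed a tame decomposition of an arbitrary $g\in\STame_n$ through Lemma~\ref{lem:S}, so that membership of each $h_i$ in $\overline{\langle\SAff_n,h\rangle}$ is all that is needed. A secondary point to verify carefully is that $h\in\SAut(\A^n)\setminus\SAff_n$ really does fail to normalize $\SAff_n$; this follows because $\mathcal{N}(\SAff_n)\subset\mathcal{N}(\Aff_n)=\Aff_n$ (as $\SAff_n$ is the kernel of the Jacobian character on $\Aff_n$, any automorphism normalizing $\SAff_n$ and lying in the larger group must preserve $\Aff_n$), combined with $h\notin\Aff_n$.
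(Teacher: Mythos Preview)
Your overall strategy---combine Theorem~\ref{thm:I} (Edo) with Lemma~\ref{lem:S}---is exactly what the paper indicates in its one-line justification. The reduction in your first paragraph is also fine: feeding a tame decomposition of an arbitrary $g\in\STame_n$ through Lemma~\ref{lem:S} shows $\STame_n=\langle\SAff_n,\mathcal{B}_n\cap\{\Jac=1\}\rangle$, so it suffices to get every Jacobian-$1$ triangular automorphism into $\overline{\langle\SAff_n,h\rangle}$.

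The gap is in the next step. You pass to $g_0=h\alpha h^{-1}\in\SAut(\A^n)\setminus\Aff_n$, apply Edo to obtain $\overline{\langle\Aff_n,g_0\rangle}\supset\Tame_n$, and then assert that ``intersecting with $\{\Jac=1\}$'' or ``re-running Lemma~\ref{lem:S}'' yields $\overline{\langle\SAff_n,g_0\rangle}\supset\STame_n$. But that implication is precisely the Corollary you are proving, with $g_0$ in place of $h$; the detour through $g_0$ buys nothing. Concretely, intersecting with the closed set $\{\Jac=1\}$ only gives $\overline{\langle\Aff_n,g_0\rangle}\cap\{\Jac=1\}\supset\STame_n$, and there is no a~priori reason this intersection equals $\overline{\langle\SAff_n,g_0\rangle}$. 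Lemma~\ref{lem:S} does not close the gap either: applied to a Jacobian-$1$ word $\alpha_1 g_0^{\pm1}\alpha_2 g_0^{\pm1}\cdots$ with $\alpha_j\in\Aff_n$, it outputs a word in $\SAff_n$ and the \emph{scalar conjugates} $\gamma_i g_0^{\pm1}\gamma_i^{-1}$, not a word in $\SAff_n$ and $g_0$ alone. So at best you obtain $\STame_n\subset\overline{\langle\SAff_n,\{\gamma h\gamma^{-1}:\gamma\in\GG_m\}\rangle}$, and you still owe an argument that $\gamma h\gamma^{-1}\in\overline{\langle\SAff_n,h\rangle}$ for every scalar $\gamma$. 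This is exactly the ``bookkeeping of Jacobians'' obstacle you anticipated; your proposed cleanest route does not actually remove it.

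A small side remark: your justification of $\mathcal{N}_{\Aut(\A^n)}(\SAff_n)\subset\Aff_n$ via the Jacobian character is not quite an argument. The clean reason is that $\Transl_n$ is the unipotent radical of $\SAff_n$, hence characteristic, and $\mathcal{N}_{\Aut(\A^n)}(\Transl_n)=\Aff_n$ (any automorphism normalizing translations has constant differential).
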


The group $\STame_n$ acts infinitely transitively on $\A^n$. This leads to the following result.

\begin{cor}\label{thm:II} For any $n\ge 2$ and any $h\in \SAut(\A^n)\setminus\SAff_n$ the subgroup
$\langle \SAff_n,h\rangle\subset \SAut(\A^n)$ acts infinitely transitively on $\A^n$. 
\end{cor}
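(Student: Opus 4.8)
The plan is to deduce this immediately from Corollary~\ref{cor:I} together with the known infinite transitivity of $\STame_n$ on $\A^n$ and the descent property provided by Proposition~\ref{lem:many-points}(c). First I would let $G = \langle \SAff_n, h\rangle \subset \SAut(\A^n)$. By Corollary~\ref{cor:I} we have $\overline{G} \supset \STame_n$. Now $\STame_n$ acts infinitely transitively on $\A^n$: indeed, it contains all the translations $H_i = \exp(\kk\p_i)$ and all the elementary root subgroups $H(i,j) = \exp(\kk x_j\p_i)$ as well as, say, $H_{1,2} = \exp(\kk x_2^2\p_1)$, all of which have Jacobian $1$; by Theorem~\ref{thm:IV} (or already by the elementary transitivity of $\SAut(\A^n)$ on $\A^n\setminus\{0\}$ together with a translation) the group they generate is infinitely transitive on $\A^n$. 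Hence $\overline{G}$, containing $\STame_n$, acts infinitely transitively on $\A^n$; in particular its open orbit is all of $\A^n$.

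Next I would observe that $G$ is algebraically generated: it is generated by the connected algebraic subgroups $\Transl_n$, $\SL(n,\kk)$, and the one-parameter unipotent subgroups of $h\SAff_nh^{-1}$ arising from writing $h\in\SAut(\A^n)$ as a product of $\GG_a$-elements — or, more cheaply, one may enlarge $G$ to $\langle \SAff_n, h\SAff_nh^{-1}\rangle$ exactly as in the proof of Theorem~\ref{cor:I}, which is visibly algebraically generated and still contained in the group whose transitivity we want. Applying Proposition~\ref{lem:many-points}(c) to this algebraically generated subgroup, its transitivity matches that of its closure, so it acts infinitely transitively on $\A^n$. Since this subgroup is contained in $\langle \SAff_n, h\rangle$, the latter acts infinitely transitively on $\A^n$ as well.

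The only point requiring a little care — and the main obstacle, such as it is — is the passage through Proposition~\ref{lem:many-points}(c), which demands that the group in question be algebraically generated rather than merely closed. This is handled precisely as in the proof of Theorem~\ref{cor:I}: one works with $\langle \SAff_n, h\SAff_nh^{-1}\rangle$, which is generated by the connected algebraic groups $\SAff_n$ and $h\SAff_nh^{-1}$ (conjugation by $h\in\Aut(\A^n)$ carries algebraic subgroups to algebraic subgroups), hence algebraically generated, contains $\STame_n$ in its closure by Corollary~\ref{cor:I}, and is sandwiched inside $\langle\SAff_n,h\rangle$. With that, infinite transitivity of $\overline{G}\supset\STame_n$ propagates down to $G$ and then up to $\langle\SAff_n,h\rangle$, completing the proof.
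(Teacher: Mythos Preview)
Your proposal is correct and follows the route the paper implicitly indicates: the paper states just before the corollary that $\STame_n$ acts infinitely transitively on $\A^n$ and that, together with Corollary~\ref{cor:I}, ``this leads to'' the result. You spell out the missing step, namely the descent from $\overline{G}\supset\STame_n$ to $G$ via Proposition~\ref{lem:many-points}(c), and you correctly handle the algebraic-generation hypothesis by passing to the subgroup $\langle\SAff_n,\,h\SAff_n h^{-1}\rangle$ exactly as in the proof of Theorem~\ref{cor:I}. One small wording slip: you write ``enlarge $G$ to $\langle\SAff_n,\,h\SAff_n h^{-1}\rangle$'', but this group is a \emph{subgroup} of $\langle\SAff_n,h\rangle$, not an enlargement; your later phrasing (``sandwiched inside'') is the right one. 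Also, when you invoke Corollary~\ref{cor:I} for this subgroup, note that what you really need is to pick $g\in h\SAff_n h^{-1}\setminus\SAff_n$ and apply Corollary~\ref{cor:I} to $g$; the existence of such $g$ uses that $h\notin\Aff_n$ does not normalize $\SAff_n$ (since $\Transl_n$ is characteristic in $\SAff_n$ and its normalizer in $\Aut(\A^n)$ is $\Aff_n$), mirroring the normalizer argument in the proof of Theorem~\ref{cor:I}.
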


Let us provide an alternative direct proof of a similar result which does not rely on the notion of cotameness.

\begin{thm}\label{thm:V} 
For any $n\ge 2$ and any non-affine root subgroup $H_u\subset\Aut(\A^n)$  the subgroup 
$$\langle {\SAff}_n, \, H_{u}\rangle \subset {\STame}_n$$
generated by $n+2$ root subgroups of $\Aut(\A^n)$ acts infinitely transitively on $\A^n$. 
Furthermore, for $n=2$ there exists a collection of three root subgroups with the latter property.
\end{thm}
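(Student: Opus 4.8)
The plan is to reduce Theorem~\ref{thm:V} to Theorem~\ref{th-inf-tr} by exhibiting, inside $\overline{G}$ with $G=\langle {\SAff}_n,H_u\rangle$, enough root subgroups to furnish a collection $(\hat\p_i,A_i)_{i=1,\ldots,n}$ of the type considered in~\ref{sit-2.1}. The starting observation is that $G$ already contains all the affine root subgroups $H(i,j)=\exp(\kk x_j\p_i)$ and all the translations $H_i=\exp(\kk\p_i)$, since these generate ${\SAff}_n$ by~\eqref{eq:n+1}; moreover $G$ is normalized by the diagonal torus only up to the finite symmetric group, so I will instead work with the maximal torus $\TT\cap\SL(n,\kk)\subset{\SAff}_n$ which normalizes $G$. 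Because $\TT$-conjugation is available in $\overline{G}$ via Corollary~\ref{cor:limit-homog-deriv}, from the single non-affine root subgroup $H_u$, $u\in\mathcal S_i\cap M$ with $\deg(x^{u+\varepsilon_i})\ge 2$, I obtain by Lemma~\ref{lem:LND-comm} an entire ray of root subgroups $H_{u+f}$ for $f\in\tau_i\cap M$. The real leverage comes from Lemma~\ref{lem:two-roots}: conjugating $H_u$ by an affine root subgroup $H(i,j)$ (which does not centralize it) produces, via the Baker-Campbell-Hausdorff computation of Corollary~\ref{cor:NP-Ad}, a new root subgroup whose degree is a shifted lattice vector, and iterating such shifts (exactly as in Lemmas~\ref{lem:first}, \ref{lem:1}, \ref{lem:2} for the case $\mathbb S(n)$) will fill out a full-rank sublattice of translates of $u$.

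Concretely, first I would normalize $u$: up to permuting coordinates (which is \emph{not} available here, so this must be done by hand for the given $i$) and up to a $\TT\cap\SL(n,\kk)$-conjugation, one may assume $u+\varepsilon_1$ is a monomial $x_2^{a_2}\cdots x_n^{a_n}$ with $\sum a_j\ge 2$. Then, using the affine subgroups $H(j,l)$, commutators of the form $\ad_{\p_{\varepsilon_j^\vee,\,-\varepsilon_j+\varepsilon_l}}$ applied to $\p_{\rho_1,u}$ move exponents between the coordinates $x_2,\ldots,x_n$ one unit at a time, and $\TT$-conjugation scales the homogeneous pieces so that by Corollary~\ref{cor:limit-homog-deriv} each intermediate vertex of the resulting segment Newton polytope yields a root subgroup in $\overline{G}$. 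This gives $H_w\subset\overline{G}$ for $w$ ranging over $-\varepsilon_1+(\text{any effective degree-}\,\ge 2\,\text{monomial in }x_2,\ldots,x_n)$, in particular for $w_0=-\varepsilon_1+\varepsilon_2+\cdots+\varepsilon_{n-1}+2\varepsilon_n$ and for $w_0$ plus any $\varepsilon_i+\varepsilon_j$ or $3\varepsilon_i$, $i,j\ge 2$. Setting $\hat\p_1=x^{w_0}\p_1$ and $A_1=\kk[x_i^3,\,x_ix_j\mid i\ne j,\ i,j\ge 2]\subset\ker\hat\p_1$, Lemma~\ref{lem:2} (for $n\ge 4$) resp.\ a direct check (for $n=3$) shows $\exp(\kk f\hat\p_1)\subset\overline{G}$ for all $f\in A_1$. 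The affine group ${\SAff}_n$ lets me transport this construction: conjugating $\hat\p_1$ by a permutation matrix in $\SL(n,\kk)$ (a composition of $H(i,j)$'s, adjusting the sign to keep the Jacobian $1$) produces $(\hat\p_i,A_i)$ for $i=2,\ldots,n$ with $\exp(\kk f\hat\p_i)\subset\overline{G}$, and the $\hat\p_i$ are clearly $n$ linearly independent LNDs.

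At this point, for $n\ge 4$ the collection $(\hat\p_i,A_i)_{i=1,\ldots,n}$ satisfies condition $(\beta)$ of~\ref{sit-2.1} because ${\rm Frac}(A_i)={\rm Frac}(\ker\hat\p_i)$ by Lemma~\ref{lem:2}, and for $n=3$ it satisfies $(\alpha)$ since ${\rm Frac}(A_1),{\rm Frac}(A_2),{\rm Frac}(A_3)$ together generate $\kk(x_1,x_2,x_3)$; for $n=2$ a separate small argument is needed (one uses $\SAff_2=\langle H_1,H(1,2),H(2,1)\rangle$ together with the extra root subgroup, and the three root subgroups promised at the end are, say, $H(1,2)$, $H(2,1)$, and $H_u$, with $u$ chosen so that Lemma~\ref{lem:two-roots} and $\TT\cap\SL(2)$-conjugation generate enough replicas of $\p/\p x_1$ and $\p/\p x_2$). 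Applying Theorem~\ref{th-inf-tr} to $\overline{G}$ gives infinite transitivity of $\overline{G}$ on its open orbit, which is all of $\A^n$ since $\overline G\supset\Transl_n\cdot\SL(n,\kk)$ already acts transitively on $\A^n$; and Proposition~\ref{lem:many-points}(c), using that $G$ is algebraically generated, descends infinite transitivity from $\overline{G}$ to $G$. Counting generators: ${\SAff}_n$ is generated by the $n+1$ root subgroups in~\eqref{eq:n+1}, and adjoining $H_u$ gives $n+2$.

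The main obstacle I anticipate is the absence of the full symmetric group in $G$: in Theorem~\ref{thm:IV} one freely permutes coordinates to turn a single root subgroup into a symmetric family, whereas here the permutations must be realized as products of the $H(i,j)$ inside $\SL(n,\kk)$, with care taken that the sign corrections keep all Jacobians equal to $1$ (so that one stays inside $\STame_n$ rather than merely $\Tame_n$). A secondary delicate point is the very first normalization step — arranging, via a $\TT\cap\SL(n,\kk)$-conjugation alone, that the given non-affine root $u$ has $u+\varepsilon_i$ a squarefree-enough monomial in the remaining variables so that the exponent-shifting commutators of Lemma~\ref{lem:two-roots} actually apply (i.e.\ that the relevant pairings $\langle\rho_2,e_1\rangle\ge 1$ hold); if $u+\varepsilon_i$ involves $x_i$ itself this needs an extra preliminary commutator with some $H(l,i)$ to eject the $x_i$-dependence. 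Once these bookkeeping issues are handled the rest is the degeneration machinery of Section~\ref{ss:degeneration} applied verbatim.
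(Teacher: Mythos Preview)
Your approach is on the right track---use Lemma~\ref{lem:two-roots} and Corollary~\ref{cor:limit-homog-deriv} to manufacture many root subgroups inside $\overline{G}$, then invoke Theorem~\ref{th-inf-tr} and Proposition~\ref{lem:many-points}(c)---but there are two genuine gaps.

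First, for $n\ge 3$: the only moves you describe explicitly are the conjugations by $H(j,l)$ with $j,l\in\{2,\ldots,n\}$, which shift one unit of exponent from $x_j$ to $x_l$ and therefore \emph{preserve} the total degree $\sum_{s\ge2}a_s$ of $x^{u+\varepsilon_1}$. From these alone you cannot reach $w_0=-\varepsilon_1+\varepsilon_2+\cdots+\varepsilon_{n-1}+2\varepsilon_n$ (degree $n$) if the given $u$ has degree $2$, nor can you reach $w_0+\varepsilon_i+\varepsilon_j$ or $w_0+3\varepsilon_i$ (higher degrees). Your appeal to Lemmas~\ref{lem:first} and~\ref{lem:1} does not help: those lemmas use the \emph{non-affine} quadratic subgroups $H_{i,j}=\exp(\kk x_j^2\p_i)$, which in Theorem~\ref{thm:IV} are available via the $\mathbb S(n)$-action but here are not a priori in $\overline{G}$. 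The missing ingredients are the degree-decreasing move $H_{u-\varepsilon_q}\subset\overline{\langle H_u,H_{e_q}\rangle}$ (using the translation $H_{e_q}\subset\SAff_n$) and, crucially, the degree-increasing move $H_{2u-\varepsilon_i+\varepsilon_1}\subset\overline{\langle H_u,H(i,1)\rangle}$ (where $\delta=2$ because $\langle\rho_1,-\varepsilon_i+\varepsilon_1\rangle=1$). Also, your invocation of Lemma~\ref{lem:LND-comm} is misplaced: that lemma is a statement about derivations, not about subgroups of $\overline{G}$.

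Second, for $n=2$: your proposed triple $H(1,2),H(2,1),H_u$ is wrong. All three subgroups fix the origin, so $\langle H(1,2),H(2,1),H_u\rangle$ cannot even act transitively on $\A^2$, let alone infinitely transitively. One needs a translation among the three.

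The paper's argument bypasses the $(\hat\p_i,A_i)$ bookkeeping entirely. It isolates three relations obtained from Lemma~\ref{lem:two-roots} using only $H_u$ and the affine root subgroups: (i) $H_{u+e_i}$, (ii) $H_{u+e_i-e_j}$ for $j\ge 2$, and (iii) $H_{2u+e_i-e_1}$, each lying in $\overline{G}$. Using (ii) and (iii) one first raises the exponent vector above any target $v\in\mathcal S_1\cap M$, then uses (i) to lower it to $v$; hence $H_v\subset\overline{G}$ for \emph{every} Demazure root $v\in\mathcal S_1$, and by conjugation with permutations in $\SL(n,\kk)$ (the sign corrections are absorbed by the torus, so your worry here is overblown) for every Demazure root in any $\mathcal S_i$. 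With all root subgroups available, Theorem~\ref{th-inf-tr} applies with $A_i=\ker\p_i$ (condition $(\beta)$ holds trivially). For $n=2$ the paper takes $H_{(0,-1)}$, $H_{(1,-1)}$, $H_{(-1,2)}$---a translation, an affine linear, and a quadratic---and shows $\overline{G}=\SAut(\A^2)$ via Jung--van der Kulk.
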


\begin{proof} 
Suppose first that $n\ge 3$ and $u=(-1,c_2,\ldots,c_n)\in {\mathcal{S}}_1\cap\ZZ^n$. 
Since $H_{u}$ is not affine one has $c_2+\ldots+c_n\ge 2$. Letting 
$e_i=-\varepsilon_i$, $i=1,\ldots,n$ and assuming $c_i \ge 1$ one can deduce from Lemma~\ref{lem:two-roots} the relations
 \begin{itemize}\item[{\rm (i)}] $H_{u+e_i}\subset\overline{\langle H_u, H_{e_i}\rangle}$;
\item[{\rm (ii)}] $H_{u+e_i-e_j}\subset\overline{\langle H_u, H_{e_i-e_j}\rangle}$ $\forall j\ge 2$, $j\neq i$;
\item[{\rm (iii)}] $H_{2u+e_i-e_1}\subset\overline{\langle H_u, H_{e_i-e_1}\rangle}$.
\end{itemize}

\smallskip

\noindent {\bf Claim.} \emph{One has $H_v\subset\overline{G}:=\overline{\langle {\SAff}_n, \, H_{u}\rangle}$ 
for any root subgroup $H_v$ with $v\in \mathcal{S}_1\cap\ZZ^n$.}

\smallskip

\noindent \emph{Proof of the Claim.} Applying (ii) and (iii) subsequently to $u$ and the vectors obtained from $u$ 
on each step one can get a root $u'\in \mathcal{S}_1\cap\ZZ^n$ whose coordinates  dominate 
the corresponding coordinates of $v$ and such that $H_{u'}\subset \overline{G}$. Using now (i) one can conclude. 
\qed

\smallskip

 Applying the cyclic permutations of coordinates one can see that the Claim holds as well for any 
 Demazure root $v$.  According to Theorem~\ref{th-inf-tr} the group $\overline{G}$ acts infinitely transitively 
 on its open orbit $\mathscr{O}_{\overline{G}}$. Being algebraically generated the group $G=\langle {\SAff}_n, \, H_{u}\rangle$ 
 does as well, see Proposition~\ref{lem:many-points}(c). Since $G\supset \Transl_n$ one has $\mathscr{O}_{G}=\A^n$. 
 This gives the first assertion for $n\ge 3$. 

Let further $n=2$. Consider  two affine and one non-affine roots
$$e_2=(0,-1)\in\mathcal{S}_2, \quad v=e_2-e_1=(1,-1)\in\mathcal{S}_2,\quad\mbox{and}\quad  u=(-1,2)\in\mathcal{S}_1\,.$$ 
We claim that the group $G=\langle H_{e_2}, \,H_{u},\,H_{v}\rangle$ acts infinitely transitively on $\A^2$. Indeed, 
by (i) one has $H_{u+e_2}\subset\overline{\langle H_{u},\, H_{e_2} \rangle}$ where $u+e_2=(-1,1)\in\mathcal{S}_1$. Since 
$${\SL}(2,\kk)=\langle H_{u+e_2},\,H_{v} \rangle\quad\mbox{and}\quad {\SAff}_2=\langle {\SL}(2,\kk),\, H_{e_2}\rangle$$ 
it follows that 
$\langle {\SAff}_2,\,H_{u}\rangle\subset\overline{G}$ for the non-affine group $H_{u}$. 

The rest of the proof proceeds likewise in the case $n\ge 3$. 
Notice first that $H_{e_1}\subset{\SAff}_2\subset\overline{G}$. Let $u_i=(-1,i)\in\mathcal{S}_1$. 
We know already that $H_{u_i}\subset\overline{G} $ for $u_0=e_1$, $u_1=u+e_2$, and $u_2=u$. 
Assume by induction that $H_{u_i}\subset\overline{G}$ for $i=1,\ldots,d$ where $d\ge 2$. 
Due to (iii) one has $H_{u_{2d-1}}\subset\overline{G}$ where $u_{2d-1}=2u_{d}+v$. 
Then by (i) one has $H_{u_{2d-2}}\subset\overline{G}$ where $u_{2d-2}=u_{2d-1}+e_2$. Continuing in this way, 
we can finally show that $H_{u_{d+1}}\subset\overline{G}$. By induction, $H_{e}\subset\overline{G}$ for any 
$e=u_i\in\mathcal{S}_1\cap M$. 
Since $\SL(2,\kk)\subset\overline{G}$ as well, one has
$H_{f}\subset\overline{G}$ for any $f=(j,-1)\in\mathcal{S}_2\cap M$. It follows by the Jung-van der Kulk 
Theorem that $\overline{G}=\SAut(\A^2)$. The latter group acts infinitely transitively on $\A^2$. 
Then also $G$ does in view of Proposition~\ref{lem:many-points}(c).
\end{proof}

The following result completes the picture. 

\begin{thm}\label{thm:III}
For any $n\ge 2$ one can find three $\GG_a$-subgroups $U_1,U_2,U_3\subset\SAut(\A^n)$ such that 
$$G=\langle U_1,U_2,U_3\rangle\subset\SAut(\A^n)$$
acts infinitely transitively on $\A^n$. 
\end{thm}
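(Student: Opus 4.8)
The plan is to reduce Theorem~\ref{thm:III} to Theorem~\ref{thm:V} by absorbing the whole group $\SAff_n$ into a single $\GG_a$-subgroup together with one auxiliary $\GG_a$-subgroup, and then adjoining the non-affine root subgroup $H_u$ as the third generator. More precisely, since Theorem~\ref{thm:V} exhibits a subgroup $\langle\SAff_n,H_u\rangle$ acting infinitely transitively on $\A^n$, and $\SAff_n=\langle\Transl_n,\SL(n,\kk)\rangle$, it suffices to find two $\GG_a$-subgroups $U_1,U_2$ whose closure in $\Aut(\A^n)$ contains $\SAff_n$ (or at least a subgroup which, together with $H_u=U_3$, generates a group with closure containing a transitively acting subgroup); then by Proposition~\ref{lem:many-points}(c) infinite transitivity passes from $\overline{\langle U_1,U_2,U_3\rangle}$ down to $G=\langle U_1,U_2,U_3\rangle$.

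The key step is therefore: \emph{two suitably chosen $\GG_a$-subgroups generate a group whose Zariski closure (in the ind-topology) contains $\SAff_n$.} Here is how I would realize this. Recall that $\SAff_n$ is generated by a root subgroup of $\SL(n,\kk)$ of type $A_{n-1}$ together with the cyclic-permutation-type generator $H(n,1)$ and one translation $H_1$, cf.\ \eqref{eq:n+1}. The idea is to take $U_1=\exp(\kk\partial)$ where $\partial$ is an LND of $\A^n$ whose flow, after conjugation, produces the torus-degenerations needed: using the degeneration technique of Proposition~\ref{prop: principal-part} and Corollary~\ref{cor:limit-homog-deriv}, the principal parts (vertices of the Newton polytope) of $\partial$ give root subgroups in $\overline{\langle U_1,U_2\rangle}$. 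Concretely, I would choose $U_1$ to be a ``generic'' linear $\GG_a$-subgroup — e.g.\ $\exp(\kk D)$ for a nilpotent $D\in\sl(n,\kk)$ with a single Jordan block, i.e.\ $D=x_2\partial_1+x_3\partial_2+\dots+x_n\partial_{n-1}$ — and $U_2=\exp(\kk E)$ for $E=x_1\partial_n$, so that $\langle U_1,U_2\rangle$ already generates $\SL(n,\kk)$ by the classical fact that a principal nilpotent and its ``opposite'' lowest root vector generate $\SL(n,\kk)$. Then I would adjoin a translation by replacing $U_1$ (or $U_2$) with a slightly modified LND whose Newton polytope has an extremal point equal to $-\varepsilon_i$, i.e.\ yielding $\partial/\partial x_i$; for instance conjugating $\partial/\partial x_1$-type behaviour out of $x_2\partial_1$ via the $\GG_m\subset\TT$ rescaling $x_1\mapsto\lambda x_1$. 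The upshot: $\overline{\langle U_1,U_2\rangle}\supset\langle\SL(n,\kk),\Transl_n\rangle=\SAff_n$, using Lemma~\ref{lem:orbit-closures}(b) to justify that the limits of conjugates land in the closure.

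With $U_3=H_u$ a non-affine root subgroup as in Theorem~\ref{thm:V}, we get $\overline{\langle U_1,U_2,U_3\rangle}\supset\overline{\langle\SAff_n,H_u\rangle}\supset$ a group acting infinitely transitively on $\A^n$ (indeed by the proof of Theorem~\ref{thm:V} the closure contains $\STame_n$, or at least a group with an open orbit equal to all of $\A^n$ on which it is infinitely transitive). Since $\langle U_1,U_2,U_3\rangle$ is algebraically generated, Proposition~\ref{lem:many-points}(c) transfers infinite transitivity to $G=\langle U_1,U_2,U_3\rangle$ itself, which is what we want.

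The main obstacle I anticipate is the verification that two $\GG_a$-subgroups suffice to put all of $\SAff_n$ (in particular \emph{both} $\SL(n,\kk)$ \emph{and} a full translation) into the closure. Getting $\SL(n,\kk)$ from two unipotents is standard, but simultaneously arranging a translation in the closure of the \emph{same} two-generated group — without spending a third generator on it — is the delicate point; it requires a careful choice of the two LNDs so that their Newton polytopes, after $\TT$-degeneration, have vertices hitting the desired Demazure facets (both those giving linear root subgroups and the facet point $-\varepsilon_i$ giving a translation). An alternative, possibly cleaner route is to first prove a two-generator version for $\SAut(\A^2)$ (where the Jung--van der Kulk structure makes the closure computation transparent) and then bootstrap to higher $n$ by a suspension/stabilization argument; in any case, once $\SAff_n\subset\overline{\langle U_1,U_2\rangle}$ is secured, the rest follows formally from Theorem~\ref{thm:V} and Proposition~\ref{lem:many-points}(c).
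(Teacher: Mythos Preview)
Your high-level strategy --- reduce to Theorem~\ref{thm:V}, show that the closure of $G$ contains $\langle\SAff_n,H_u\rangle$, then invoke Proposition~\ref{lem:many-points}(c) --- is exactly the paper's. The gap is in how you allocate the three generators.

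You propose to spend $U_1,U_2$ on producing $\SAff_n$ in the closure and to keep $U_3=H_u$ separate. With the choices you suggest this cannot work: if $U_1,U_2\subset\SL(n,\kk)$ (your $D$ and $E$ are both linear nilpotents), then $\langle U_1,U_2\rangle\subset\SL(n,\kk)$, and since $\SL(n,\kk)$ is a closed algebraic subgroup of $\Aut(\A^n)$ the closure $\overline{\langle U_1,U_2\rangle}$ stays inside $\SL(n,\kk)$ --- no translation can ever appear. Your proposed fix (``modify one LND so its Newton polytope has a vertex at $-\varepsilon_i$'') is not carried out, and in any case the degeneration device of Corollary~\ref{cor:limit-homog-deriv} requires the group to be normalized by $\TT$, which fails once you replace a root subgroup by $\exp(\kk D)$ with $D$ a principal nilpotent mixing several root spaces. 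More structurally: if both $U_1,U_2$ are affine then $\overline{\langle U_1,U_2\rangle}\subset\Aff_n$ and you never reach the non-affine regime; if one of them is already non-affine you do not need a separate $U_3=H_u$, and you are in effect attempting a two-generator theorem, which remains open (cf.~\ref{Q1}).

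The paper's key idea is to let $H_u$ do double duty. Take $u=(-1,2,0,\ldots,0)$ and let the three generators be $U_y$, $H_{e_2}$ (the translation along $x_2$), and $H_u$. Lemma~\ref{lem:two-roots} applied to the $\TT$-normalized pair $(H_u,H_{e_2})$ yields
\[
H_{u+e_2}=H_{(-1,1,0,\ldots,0)}=\exp(\kk\, x_2\partial_1)=:U_x\ \subset\ \overline{\langle H_u,H_{e_2}\rangle},
\]
a linear root subgroup of $\SL(n,\kk)$. Now invoke the external input \cite{Chi}: for this nilpotent $x\in\sl(n,\kk)$ there exists a nilpotent $y$ with $\SL(n,\kk)=\langle U_x,U_y\rangle$. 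Hence the closure of $G=\langle U_y,H_{e_2},H_u\rangle$ contains $\SL(n,\kk)$, the translation $H_{e_2}$ (whence all of $\SAff_n$), and $H_u$; Theorem~\ref{thm:V} and Proposition~\ref{lem:many-points}(c) finish. What your sketch is missing is precisely this reuse of $H_u$ to manufacture one of the two $\SL(n,\kk)$-generators inside the closure, together with the appeal to \cite{Chi} that two nilpotents generate $\sl(n,\kk)$.
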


\begin{proof} For $n=2$ the result follows from Theorem~\ref{thm:V}. 
Letting $n\ge 3$ consider the root vector $u=(-1,2,0,\ldots,0)\in\mathcal{S}_1$. Using Lemma ~\ref{lem:two-roots} 
one can deduce the relations
\begin{equation}\label{eq:2grps}
H_{u+e_2}=H_{e_1-e_2}=\exp({\kk x})\subset \overline{\langle H_{u},\,H_{e_2}\rangle}\cap\SL(n,\kk)\,\end{equation} 
where $x\in\sl(n,\kk)$ is the  nilpotent infinitesimal generator of $H_{e_1-e_2}$. 
According to \cite{Chi} there exists a nilpotent $y\in\sl(n,\kk)$ such that $\sl(n,\kk)={\rm Lie}\,\langle x,y\rangle$. It follows that
$$\SL(n,\kk)=\langle U_x,U_y\rangle\quad\mbox{where}\quad U_x=H_{e_1-e_2}=\exp(\kk x)\quad\mbox{and}\quad U_y=\exp(\kk y)\,.$$
By virtue of (\ref{eq:2grps}) one has
$${\SAff}_n=\langle U_x,\,U_y,\,H_{e_2} \rangle\subset \overline{\langle U_y,\,H_{e_2},\,H_{u}\rangle}\,$$ 
where $H_{u}$ is not affine. Let $G=\langle U_y,\,H_{e_2},\,H_{u}\rangle$. 
By Theorem~\ref{thm:V}, the subgroup $\langle {\SAff}_n,\,H_{u}\rangle\subset\overline{G}$ 
acts infinitely transitively on $\A^n$. Hence $\overline{G}$ and also $G$ do, due to Proposition~\ref{lem:many-points}(c).
\end{proof}

\begin{rem}\label{rem:Andrist} The first preprint version of this paper contained a weaker result claiming, 
for any $n\ge 3$, the infinite transitivity on $\A^n$ of a subgroup generated by four $\GG_a$-subgroups. 
Rafael B.~Andrist (\cite[Cor.\ 11]{An}) independently discovered three such subgroups given by explicit locally nilpotent derivations. 
\end{rem}

\subsection{Infinite transitivity on toric varieties}\label{ss:toric-inf-trans} 
\begin{sit}\label{sit:regular}
The toric varieties in this paper are supposed to be normal, in particular, smooth in codimension 1 and, moreover, smooth in codimension 2. 
For a toric  affine  variety $X$ the latter condition is equivalent to the following one: 
any two-dimensional face $\tau$ of the cone $\sigma\subset N_\QQ$ of $X$ 
is regular, that is, the pair of ray generators  $(\rho_i,\,\rho_j)$  of $\tau$  can be included in a base of the lattice $N$. 
For instance, the cone $\sigma\subset\A^3_\QQ$ with the primitive ray generators
$$\rho_i=\varepsilon_i,\,\,\,i=1,2,3,\,\,\,\rho_4=\varepsilon_1+\varepsilon_2-\varepsilon_3\,$$ 
defines a toric  threefold $X$ with a single singular point; thus, $X$  is smooth in codimension 2.
\end{sit}

The following theorem is the main result of this section.

\begin{thm}\label{thm:toric-inf-trans}
Let $X$ be a  toric affine variety of dimension $n\ge 2$ with no torus factor. 
Suppose that $X$ is smooth in codimension $2$. Then one can find a finite collection of root subgroups 
$H_1,\ldots,H_N$ such that the group $$G=\langle H_1,\ldots,H_N\rangle$$ acts infinitely transitively on the regular locus $\reg(X)$.
\end{thm}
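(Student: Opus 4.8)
The plan is to run, for a general $X$, the three-step scheme underlying the proof of Theorem~\ref{thm:IV}: choose finitely many Demazure root subgroups; use the degeneration machinery of Section~\ref{ss:degeneration} to show that the ind-closure $\overline{G}$ of the group $G$ generated by them contains many more root subgroups; and conclude via Theorem~\ref{th-inf-tr} together with Proposition~\ref{lem:many-points}(c). I would work intrinsically with Demazure roots, although one could equally translate everything into $F_{\Cox}$-equivariant elementary automorphisms of $\A^k$ through the Cox presentation $X\cong\A^k//F_{\Cox}$ of Section~\ref{ss:Cox}. First fix, for every primitive ray generator $\rho_i\in\Xi$, a Demazure root $e^{(i)}\in\mathcal{S}_i\cap M$: this set is nonempty since $X$ has no torus factor, so the facet $\tau_i=\rho_i^\vee$ of the pointed cone $\sigma^\vee$ is full-dimensional in $\rho_i^{\perp}$, and one may take $e^{(i)}=e_0+Nf$ with $\langle\rho_i,e_0\rangle=-1$, $f$ a lattice point in the relative interior of $\tau_i$, and $N\gg 0$. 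Put $\partial_i=\partial_{\rho_i,e^{(i)}}$. Since $\Xi$ contains a $\QQ$-basis of $N_\QQ$, among the $\partial_i$ there are $n$ that are $\kk(X)$-linearly independent and pairwise non-collinear --- this is immediate from the expression $\partial_{\rho,e}=\chi^{e}\sum_p\rho_p\,t_p\partial_{t_p}$ on the open torus --- so the family $\{\partial_i\}$ already fulfils the structural requirements of~\ref{sit-2.1}.

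The heart of the argument is a combinatorial claim: from a suitably chosen finite set of Demazure root subgroups one can guarantee that $\overline{G}$ contains $H_{e^{(i)}+f}$ for every $f$ in a finitely generated subsemigroup $\mathcal{R}_i\subset\tau_i\cap M$ of maximal rank $n-1$, for each $i$. The only tool is Lemma~\ref{lem:two-roots}: if the subgroups of roots $e'\in\mathcal{S}_i$ and $e''\in\mathcal{S}_j$ already lie in $\overline{G}$ and $\langle\rho_j,e'\rangle\ge 1$, then the root $e''+(\langle\rho_i,e''\rangle+1)\,e'\in\mathcal{S}_i$ is again available. This is exactly the mechanism of Lemmas~\ref{lem:1} and~\ref{lem:2}, and here is where smoothness in codimension~$2$ enters: for each two-dimensional face $\tau$ of $\sigma$, with ray generators $\rho_i,\rho_j$, regularity of $\tau$ lets one pick roots $e'\in\mathcal{S}_i$, $e''\in\mathcal{S}_j$ taking the value $-1$ on their own ray, $1$ on the other, and $0$ on all remaining rays; these \emph{low-weight} roots are precisely what makes iteration of Lemma~\ref{lem:two-roots} sweep out a finitely generated full-rank subsemigroup rather than something degenerate. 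Because the graph whose vertices are the rays and whose edges are the two-dimensional faces of $\sigma$ is connected, the moves available on the individual faces propagate to all rays, yielding the subsemigroups $\mathcal{R}_i$. Then the infinitely many, pairwise commuting root subgroups $\{H_{e^{(i)}+f}\}_{f\in\mathcal{R}_i}\subset\overline{G}$ generate $\exp(A_i\partial_i)$, where $A_i={\rm span}\{\chi^{f}:f\in\mathcal{R}_i\}\subset\ker\partial_i$ is a finitely generated subalgebra with $[{\rm Frac}(\ker\partial_i):{\rm Frac}(A_i)]<\infty$, because $\mathcal{R}_i$ is a finitely generated semigroup of full rank $n-1$.

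Two more adjustments, each requiring only finitely many additional root subgroups, complete the construction. First one ensures that one of the conditions $(\alpha)$--$(\gamma)$ of~\ref{sit-2.1} holds for the resulting data; in case $(\gamma)$ the element $b_1\in\ker\partial_1$ can be chosen as a $\kk$-linear combination of monomials $\chi^{f}$ with $H_{e^{(1)}+f}\subset\overline{G}$, so $\exp(\kk b_1\partial_1)$ already lies in the generated group (those root subgroups commute). Second, every Demazure root subgroup is normalized by $\TT$, hence so is $G$, and therefore the open orbit $\mathscr{O}_G$ is a union of $\TT$-orbits; adding, for each regular face $\gamma$ of $\sigma$, a root subgroup moving some point of the orbit $O_\gamma$ off $\bigcup_i D_i$ forces $\mathscr{O}_G\supseteq\bigcup_{\gamma\,\mathrm{regular}}O_\gamma=\reg(X)$, whence $\mathscr{O}_G=\reg(X)$ since $X$ is smooth in codimension~$2$.

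Now let $G$ be the group generated by the resulting finite collection of Demazure root subgroups. It is algebraically generated, so $\mathscr{O}_G=\mathscr{O}_{\overline{G}}=\reg(X)$ by Proposition~\ref{lem:many-points}(a). The subgroup $G'\subset\overline{G}$ generated by $\exp(\kk b_1\partial_1)$ and by all $\exp(\kk a_i\partial_i)$, $a_i\in A_i$, is of the exact shape considered in~\ref{sit-2.1} and satisfies one of $(\alpha)$--$(\gamma)$; hence Theorem~\ref{th-inf-tr} shows that $G'$, and therefore $\overline{G}\supseteq G'$, acts infinitely transitively on its open orbit, which is $\reg(X)$. Finally Proposition~\ref{lem:many-points}(c) carries infinite transitivity from $\overline{G}$ down to $G$, which proves the theorem. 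The step I expect to be the main obstacle is the combinatorial claim of the second paragraph --- showing that finitely many Demazure roots suffice for the degeneration process to reach a finitely generated full-rank subsemigroup inside each Demazure facet, and locating exactly where regularity of the two-dimensional faces of $\sigma$ is used; a lesser nuisance is the bookkeeping that pins $\mathscr{O}_G$ down to all of $\reg(X)$ rather than to a smaller $\TT$-invariant open subset.
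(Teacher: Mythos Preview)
Your overall scheme matches the paper's proof closely: pick finitely many root subgroups, show via iterated application of Lemma~\ref{lem:two-roots} that $\overline{G}$ contains $\exp(\kk a\,\partial_i)$ for all $a$ in a finitely generated subalgebra $A_i\subset\ker\partial_i$ of full rank, then apply Theorem~\ref{th-inf-tr} and Proposition~\ref{lem:many-points}(c). You have also correctly located the crux in the combinatorial claim.

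However, your proposed mechanism for that claim has a genuine gap. You assert that regularity of a two-dimensional face spanned by $\rho_i,\rho_j$ yields a root $e'\in\mathcal{S}_i$ with $\langle\rho_i,e'\rangle=-1$, $\langle\rho_j,e'\rangle=1$, and $\langle\rho_l,e'\rangle=0$ for all remaining $l$. That is $k$ linear conditions on a vector in the rank-$n$ lattice $M$; when $k>n$ (as in the paper's own example in~\ref{sit:regular}, where $n=3$ and $k=4$) such an $e'$ need not exist. Regularity of the face only says that $\rho_i,\rho_j$ extend to a $\ZZ$-basis of $N$, hence that any two prescribed integers can be realised on $\rho_i$ and $\rho_j$; it gives no control over the other $k-2$ rays. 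Moreover, since $\Phi\colon M\hookrightarrow\ZZ^k$ is injective, a root with those values would be unique and equal to $-e''$, so iterating Lemma~\ref{lem:two-roots} on the pair $(e',e'')$ alone goes nowhere.

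The paper's construction is more delicate and uses regularity differently. What regularity of $\tau_{1,2}=\cone(\rho_1,\rho_2)$ actually buys is a lattice vector $\breve m_{1,2}\in M$ with $\langle\rho_1,\breve m_{1,2}\rangle=0$ and $\langle\rho_2,\breve m_{1,2}\rangle=-1$. Adding a large multiple of a strictly supporting functional for the face $\tau_{1,2}$ then gives a root $e_2\in\mathcal{S}_2$ with $\langle\rho_1,e_2\rangle=0$ and $\langle\rho_j,e_2\rangle\ge 2$ for $j\ge 3$; the same is done with a second adjacent face $\tau_{1,3}$ to produce $e_3\in\mathcal{S}_3$ with $\langle\rho_1,e_3\rangle=0$. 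The key is the single vanishing on $\rho_1$: it forces $\delta=1$ in every application of Lemma~\ref{lem:two-roots}, so that from $w\in\mathcal{S}_1$ one passes to $w+u_i\in\mathcal{S}_1$ and then to $w+u_i+e_3=w+v_i\in\mathcal{S}_1$, with $v_i\in\tau_1$. One manufactures $n-1$ linearly independent such $v_i$ by translating $e_2$ by ray generators of the codimension-two face $\tau_{1,2}^\vee\subset\sigma^\vee$. This yields the full-rank subsemigroup $\mathcal{M}_1\subset\tau_1\cap M$ directly, using only two $2$-faces adjacent to $\rho_1$ and no connectivity argument on the ray graph.

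Two minor points. The paper does not argue $\mathscr{O}_G=\reg(X)$ face by face; it simply cites \cite[Thm.~2.1]{AKZ} and \cite[Prop.~1.5]{AFKKZ} to extract a finite set of root subgroups already transitive on $\reg(X)$ and adjoins them to $G$. And condition $(\gamma)$ is handled exactly as you say, by writing $b_1=\sum_j c_j\chi^{m_j}$ with $m_j\in\tau_1\cap M$ and adding the finitely many root subgroups $H_{e_1+m_j}$.
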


\begin{proof}
If  $n=2$ then $X$ is smooth, hence $X\cong\A^2$. In this case the result (with $N=3$) follows from Theorem~\ref{thm:V}. 

Assume in the sequel $n\ge 3$.
Due to \cite[Thm.\ 2.1]{AKZ}, $\reg(X)$ coincides with the open orbit of  the group $\SAut(X)$. 
Inspecting the proof of \cite[Thm.\ 2.1]{AKZ} one can conclude that the same holds for the subgroup $R\subset\SAut(X)$ generated by the root subgroups.
By \cite[Prop.\ 1.5]{AFKKZ}
there exists a finite collection of root subgroups
$H_1,\ldots,H_r$ such that the group generated by $H_1,\ldots,H_r$ acts transitively on $\reg(X)$ too. 
To get infinite transitivity we need to enlarge this collection. 

Recall that $\Xi$ stands for the set of the primitive ray generators $\rho_1,\ldots,\rho_k$ of the cone $\sigma\subset N_{\QQ}$ associated with $X$.
Given a ray generator, say, $\rho_1\in\Xi$ there exists $\bar m_1\in M$ such that the hyperplane 
$L_{\bar m_1}=\{v\in N_\QQ\,|\,\langle v,\bar m_1\rangle=0\}$ is strictly supporting  for the ray 
$\QQ_{\ge 0}\rho_1$ of $\sigma$, that is, $$\langle \rho_1,\bar m_1\rangle=0\quad\mbox{and}\quad\langle \rho_j,\bar m_1\rangle>0\quad\forall j=2,\ldots,k\,.$$
Since $\rho_1$ is a primitive lattice vector its coordinates are coprime. 
So, $\langle \rho_1,\breve m_1\rangle=-1$ for some $\breve m_1\in M$. Fix $r\gg 1$ and a root vector 
$$e_1=r\bar m_1+ \breve m_1\in \mathcal{S}_1\quad\mbox{where}\quad \langle \rho_j,e_1\rangle \ge 2 \quad\quad\forall j=2,\ldots,k\,.$$
Up to renumbering one may suppose that the 2-cones $\tau_{1,2}$ and $\tau_{1,3}$ 
spanned by the pairs of ray generators $(\rho_1,\rho_2)$ and $(\rho_1,\rho_3)$, respectively, 
are two-dimensional faces of $\sigma$ containing the common ray $\QQ_{\ge 0}\rho_1$. 
By our assumption, $\tau_{1,2}$ is regular. Hence one can find $\breve m_{1,2}\in M$ 
such that $$\langle \rho_1,\breve m_{1,2}\rangle=0\quad\mbox{and}\quad  \langle \rho_2,\breve  m_{1,2}\rangle=-1\,.$$
Choose a strictly supporting hyperplane  $L_{1,2}=\{\langle v,\bar m_{1,2}\rangle =0\}$ of  
the face $\tau_{1,2}$ of $\sigma$ where $\bar m_{1,2}\in M$ satisfies
$$\langle \rho_1,\bar m_{1,2}\rangle =\langle \rho_2,\bar m_{1,2}\rangle=0\quad\mbox{and}\quad  \langle \rho_j,\bar m_{1,2}\rangle>0\quad\forall j\ge 3\,.$$
Fixing $r\gg 1$ consider the root
\begin{equation}\label{eq:e2r} e_2=r\bar m_{1,2}+ \breve m_{1,2}\in \mathcal{S}_2\,\,\,\mbox{with}\,\,\, \langle \rho_1, e_2\rangle =0,\quad \langle \rho_2, e_2\rangle =-1,\quad\mbox{and}\quad \langle \rho_j,e_2\rangle \ge 2 \quad\forall j\ge 3\,.\end{equation}
Choose a root $e_3\in \mathcal{S}_3$ in a similar fashion. Then
in the total coordinates one has
$$\hat e_1 =(-1,*,\ldots,*),\quad\hat e_2=(0,-1,*,\ldots,*),\quad\mbox{and}\quad \hat e_3=(0,*,-1,*\ldots,*)$$
where the stars are integers $\ge 2$. 

Let $\tau_{1,2}^\vee$ be the face  of $\sigma^\vee$ of codimension 2 dual to $\tau_{1,2}$, that is,
$$\tau_{1,2}^\vee=\left\{\bar m_{1,2}\in M\,|\,\,\,\begin{cases} \langle \rho_i,\,\bar m_{1,2}\rangle=0,\,\,\,&i=1,2\\ 
\langle \rho_i,\,\bar m_{1,2}\rangle\ge 0,\,\,\,&i=3,\ldots,k\,\end{cases}\right\}\,.$$
Choosing $n-2$ linearly independent primitive ray generators $\{\eta_1,\ldots,\eta_{n-2}\}$ of $\tau_{1,2}^\vee$ consider the sequence of roots 
\begin{equation}\label{eq:u-i} u_1=e_2,\,u_2=e_2+\eta_1,\ldots,u_{n-1}=e_2+\eta_{n-2}\in \mathcal{S}_2\cap M\,\end{equation}
 with total coordinates $\hat u_i=(0,-1,*,\ldots,*)$ where $"*"\ge 2$.
The lattice vectors 
\begin{equation}\label{eq:v-i} v_1=u_1+e_3=e_2+e_3,\,\,\,v_2=u_2+e_3,\,\,\ldots,\,\,v_{n-1}=u_{n-1}+e_3\in\tau_1\,\end{equation}
have total coordinates $\hat v_i=(0,*,\ldots,*)$ where  $"*"\ge 1$.
We claim that these vectors are linearly independent, that is, $v_1=e_2+e_3\notin\span(\eta_1,\ldots,\eta_{n-2})=:W$.  
Indeed, any $\eta\in W$ has total coordinates $(0,0,*,\ldots,*)$ whereas $\langle\rho_2,v_1\rangle>0$. 

\smallskip

\noindent {\bf Claim.} \emph{Consider the cone $\omega\subset \tau_1$ of dimension $n-1$ with ray generators 
$v_1,\ldots,v_{n-1}$. Consider also the submonoid $\mathcal{M}_1=\ZZ_{\ge 0}v_1+\ldots+\ZZ_{\ge 0}v_{n-1}$ of $\omega$ of rank $n-1$ and the subgroup
$$G_1=\langle H_{e_1}, H_{u_1},  H_{u_2},\ldots,H_{u_{n-1}},H_{e_3}\rangle\subset\SAut(X)\,.$$ 
Then one has $H_w\subset\overline{G}_1$ for any root $w\in e_1+\mathcal{M}_1\subset\mathcal{S}_1\cap M$.
}

\smallskip

\noindent \emph{Proof of the claim}. 
The assertion is true for $w=e_1$.
Assume by recursion that  $H_w\subset\overline{G}_1$ for some root $w\in e_1+\mathcal{M}_1$. 
It suffices to show that then the same holds as well for any root $w+v_i\in e_1+\mathcal{M}_1$, $i=1,\ldots,n-1$.

Notice that $\hat w=(-1,*,\ldots,*)$ where  $"*"\ge 2$. Since $\langle\rho_2,w\rangle\ge 1$
the pair $(w,u_i)$ satisfies the assumptions of Lemma~\ref{lem:two-roots} with $\langle\rho_1,u_i\rangle=0$ and $\delta=1$
for any $i=1,\ldots,n-1$. Applying the recursive hypothesis and Lemma~\ref{lem:two-roots} one deduces that 
$$H_{w+u_i}\subset\overline{\langle H_w, H_{u_i}\rangle}\subset\overline{G}_1\quad\forall i=1,\ldots,n-1\,.$$
Likewise, since $\langle\rho_3,w+u_i\rangle\ge 1$
the pair $(w+u_i,e_3)$ satisfies the assumptions of Lemma~\ref{lem:two-roots} with $\langle\rho_1,e_3\rangle=0$ and $\delta=1$
for any $i=1,\ldots,n-1$.
Applying Lemma~\ref{lem:two-roots} one deduces by \eqref{eq:v-i} that
$$H_{w+v_i}\subset\overline{\langle  H_{w+u_i}, H_{e_3} \rangle}\subset\overline{G}_1\quad\forall i=1,\ldots,n-1\,.$$
 This yields the inductive step and ends the recursion. \qed

\smallskip

Now one can constitute the data verifying the assumptions~\ref{sit-2.1} of Theorem~\ref{th-inf-tr}. 
Recall that we fixed already   a collection of root subgroups $H_1,\ldots,H_r$ such that the open orbit of the group 
$\langle H_1,\ldots,H_r\rangle$ coincides with $\reg(X)$. 

Letting $\p_1=\p_{\rho_1,e_1}\in\LND(\mathcal{O}_X(X))$ consider the subalgebra 
$$A_1=\kk[\chi^v\,|\,v\in  \mathcal{M}_1]=\kk[\chi^{v_1},\ldots,\chi^{v_{n-1}}]\subset \ker(\p_1)\,,$$ see Remark~\ref{rem:4.5}.
According to the Claim for any $f\in A_1$ the replica $\exp(\kk f\p_1)$
of $H_{e_1}$ is a subgroup of $\overline{G}_1$. Since ${\rm rank}\,(\mathcal{M}_1)=n-1$ one has 
$$[{\rm Frac}(\ker(\p_1)):{\rm Frac}(A_1)]<+\infty\,.$$
 Hence there exists $b_1\in \ker\partial_1$ such that ${\rm Frac}\,(\ker\partial_1)$ is generated by $b_1$ 
 and ${\rm Frac}\,(A_1)$.  According to Remark~\ref{rem:4.5} one can write $b_1=\sum_{j=1}^s c_j\chi^{m_j}$ 
 where $m_j\in \tau_1\cap M$. Then $H=\exp(\kk b_1\partial_1)$ is contained in the product 
 of the commuting root subgroups $H_{r+j}:=\exp(\kk\chi^{m_j}\p_1)$, $j=1,\ldots,s$.

Choose linearly independent ray generators $\rho_1,\ldots,\rho_n\in\Xi$.
Repeating the same construction one obtains for any $i=1,2,\ldots,n$ a triple $(G_i, \p_i, A_i)$ 
with properties similar to the ones of  $(G_1, \p_1, A_1)$. Let now 
$$G=\langle H_1,\ldots,H_{r+s},G_1,\ldots,G_n\rangle\subset\SAut(X)\,.$$
The group $\overline{G}$ satisfies the assumptions~\ref{sit-2.1}($\gamma$) of Theorem~\ref{th-inf-tr}. 
Due to this theorem, $\overline{G}$ acts infinitely transitively on its open orbit 
$\mathscr{O}_{\overline{G}}=\mathscr{O}_G=\reg(X)$. By virtue of Proposition~\ref{lem:many-points}(c) the same is true for $G$. 
\end{proof}

\subsubsection{Final remarks}

Theorem~\ref{thm:toric-inf-trans} leads to the following questions.

\begin{sit}\label{Q1} {\bf Problem.} \emph{Let $X$ be a toric affine variety  with no torus factor.
What is the smallest number of root subgroups {\rm (}$\GG_a$-subgroups, respectively{\rm )} $H_1,\ldots,H_s$  of $\Aut(X)$ 
such that $G=\langle H_1,\ldots,H_s \rangle$ acts on $X$ with an open orbit and is infinitely transitive on this orbit?}
\end{sit}
We cannot exclude that this number equals 2,  at least in the setup of arbitrary $\GG_a$-subgroups; cf.\ Theorem~\ref{thm:III}. 
Let us remind
the question (V.~L.~Popov \cite[Problem 3.1]{Pop05}) as to when (the closure of) the subgroup $G=\langle H_1, H_2\rangle$ 
generated by $\GG_a$-subgroups $H_1, H_2\subset\Aut(\A^n)$ is an algebraic group. The third author thanks 
Hanspeter Kraft for an inspiring example of two root subgroups of $\Aut(\A^2)$ whose product is an (infinite dimensional) free product.
The discussions with  Hanspeter Kraft  resulted in the following theorem (\cite[Thm.\ 5.5.1]{KZ}) which answers, in particular, the  question above.

\noindent  \begin{thm} Given an affine variety $X$ the subgroup $G\subset\Aut(X)$ generated by a family $\mathfrak{F}$ 
of connected algebraic subgroups of $\Aut(X)$ is a {\rm (}closed{\rm )} algebraic group if and only if the Lie algebras ${\rm Lie}\,(H)$ for $H\in\mathfrak{F}$ generate a finite dimensional Lie algebra.
\end{thm}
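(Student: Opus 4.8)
The plan is to treat the two implications separately; the forward direction is immediate from the definitions, while the converse rests on an integration argument. If $G$ is a closed algebraic subgroup of $\Aut(X)$, then $\mathfrak g_0:={\rm Lie}(G)\subset\Der(\mathcal O_X(X))$ is finite dimensional, every connected algebraic subgroup $H\subset G$ has ${\rm Lie}(H)\subset\mathfrak g_0$, and since each $H\in\mathfrak F$ lies in $G$, the Lie subalgebra of $\Der(\mathcal O_X(X))$ generated by $\{{\rm Lie}(H):H\in\mathfrak F\}$ is contained in $\mathfrak g_0$, hence finite dimensional. So this implication needs nothing beyond the definitions.

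For the converse, set $A=\mathcal O_X(X)$ and suppose $\mathfrak g:=\langle{\rm Lie}(H):H\in\mathfrak F\rangle_{\rm Lie}\subset\Der(A)$ is finite dimensional. First I would reduce to a finite subfamily: $\mathfrak g$ is spanned by finitely many vectors, each lying in some ${\rm Lie}(H)$, so there are $H_1,\dots,H_N\in\mathfrak F$ with $\mathfrak g=\langle{\rm Lie}(H_1),\dots,{\rm Lie}(H_N)\rangle_{\rm Lie}$; put $G_0=\langle H_1,\dots,H_N\rangle\subset G$. The crux is then to \emph{integrate} $\mathfrak g$, i.e.\ to produce a connected algebraic subgroup $\hat G\subset\Aut(X)$ — contained in a single $\Sigma_s$ in the notation of~\ref{sit:3.1} — with ${\rm Lie}(\hat G)=\mathfrak g$. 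Granting $\hat G$, the Lie correspondence for algebraic subgroups of the ind-group $\Aut(X)$ gives $H\subset\hat G$ for every $H\in\mathfrak F$ (since ${\rm Lie}(H)\subset\mathfrak g={\rm Lie}(\hat G)$ and $H$ is connected), so $G\subset\hat G$; conversely, by Chevalley's theorem a connected algebraic group is generated in boundedly many multiplications by any family of connected subgroups whose Lie algebras span its Lie algebra, so $\hat G=H_{i_1}\cdots H_{i_m}\subset G_0\subset G$ for a suitable finite word in $H_1,\dots,H_N$. Hence $G=G_0=\hat G$ is a closed algebraic subgroup.

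Everything thus comes down to the integration step, which I expect to be the main obstacle. The decisive point is to show that $\mathfrak g$ acts \emph{locally finitely} on $A$, i.e.\ that every $a\in A$ lies in a finite-dimensional $\mathfrak g$-submodule — equivalently, that $\mathfrak g$ is an algebraic Lie subalgebra of ${\rm Lie}(\Aut(X))$; this is exactly where the hypothesis "generated by Lie algebras of algebraic subgroups" is used in an essential way, since a general finite-dimensional Lie subalgebra of $\Der(A)$ need not integrate (e.g.\ $\kk\,x^2\,\p/\p x\subset\Der(\kk[x])$, whose flow does not act on $\AA^1$ by automorphisms). To establish local finiteness I would use that the set $A^{\rm lf}$ of $\mathfrak g$-locally finite elements is a $\mathfrak g$-stable subalgebra of $A$ (Leibniz rule), that $A$ is finitely generated, and that for each $i$ the whole of $A$ is locally finite for the algebraic group $H_i$, hence for ${\rm Lie}(H_i)$; feeding this into a Levi--Mostow decomposition $\mathfrak g=\mathfrak s\ltimes\mathfrak r$ — handling the reductive part via complete reducibility of its finite-dimensional representations and the unipotent part of the radical, which acts by locally nilpotent derivations and integrates to a unipotent group — one deduces $A^{\rm lf}=A$. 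Once the action is locally finite, $A$ is an increasing union of finite-dimensional $\mathfrak g$-submodules $V_1\subset V_2\subset\cdots$ generating $A$; exponentiating the algebraic $\mathfrak g$-action on each $V_r$ produces a compatible family of algebraic group actions, hence a morphism of ind-groups into $\Aut(X)$ whose image $\hat G$ is a closed algebraic subgroup inside a fixed $\Sigma_s$ with ${\rm Lie}(\hat G)=\mathfrak g$. The two points requiring the most care are (a) the local finiteness of the $\mathfrak g$-action on $A$, and (b) verifying that the resulting ind-group homomorphism is a closed embedding realizing precisely the Lie algebra $\mathfrak g$.
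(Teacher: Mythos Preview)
The paper does not contain a proof of this theorem: it is quoted without argument from \cite[Thm.\ 5.5.1]{KZ} (a preprint of Kraft and Zaidenberg) in the final remarks, so there is no ``paper's own proof'' to compare against. I can therefore only comment on the internal soundness of your outline.

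Your architecture is the right one: the forward implication is trivial, and for the converse you correctly reduce to integrating the finite-dimensional $\mathfrak g\subset\Der(A)$ to a closed algebraic subgroup $\hat G\subset\Aut(X)$, after which the sandwich $G\subset\hat G$ (Lie correspondence for connected subgroups) and $\hat G\subset G$ (Chevalley's generation theorem applied inside $\hat G$) closes the argument. The reduction to finitely many $H_i$ is also fine. The one place where your sketch does not close is precisely the local-finiteness step you yourself flag as delicate. Knowing that each ${\rm Lie}(H_i)$ acts locally finitely on $A$ and that the abstract Lie algebra $\mathfrak g$ has a Levi decomposition $\mathfrak s\ltimes\mathfrak r$ does \emph{not} by itself force $\mathfrak g$ to act locally finitely: the Levi and nilradical pieces are defined via the \emph{adjoint} representation, and there is no a~priori reason the nilradical should consist of locally nilpotent derivations of $A$, nor that a Cartan of $\mathfrak s$ should act semisimply on $A$. (For orientation: the span of $\p_x$, $x\p_x$, $x^2\p_x$ inside $\Der(\kk[x])$ is an $\mathfrak{sl}_2$ that does \emph{not} act locally finitely; the theorem is not contradicted only because $x^2\p_x$ is not the Lie algebra of any algebraic subgroup --- but your Levi paragraph never invokes that hypothesis at the decisive moment.) What is actually needed is a statement of the form ``a finite-dimensional Lie subalgebra of ${\rm Lie}\,\Aut(X)$ generated by algebraic subalgebras is itself algebraic'', and this is essentially the content of the theorem rather than a routine lemma one can cite. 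A cleaner route avoids Lie-algebra integration altogether and works on the group side: one shows that the closures of the product sets $H_{i_1}\cdots H_{i_m}$ form an increasing chain of irreducible subvarieties of bounded dimension (the bound coming from $\dim\mathfrak g$, since the tangent space at $e$ to each such product is contained in $\mathfrak g$), hence stabilise at a closed algebraic subgroup; but making this rigorous inside the ind-variety $\Aut(X)$, where the ambient pieces $\Sigma_s$ vary with $m$, still requires nontrivial work that your outline does not supply.
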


The following conjecture arises naturally (cf., e.g., Lemma~\ref{lem:two-roots}).

\begin{conj}\label{conj:unip-sbgrps} \emph{Let $X$ be an affine variety, and 
let $A=\mathcal{O}_X(X)$ be its structure algebra. Consider the group $G=\langle H_1,\ldots,H_k\rangle$ 
generated by a finite collection of $\GG_a$-subgroups $H_i=\exp(\kk \p_i)\subset\SAut(X)$ 
where $\p_i\in\LND(A)$, $i=1,\ldots,k$. Then the $\GG_a$-subgroup $H=\exp(\kk\p)\subset\SAut(X)$ where $\p\in\LND(A)$ is contained in $\overline{G}$
 if and only if $\p\in{\rm Lie}\,\langle \p_1,\ldots,\p_k\rangle$.}
\end{conj}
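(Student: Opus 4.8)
The plan is to establish the two implications of the equivalence separately; I expect the converse implication to be the genuinely hard one.

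For the implication ${\p\in{\rm Lie}\,\langle\p_1,\ldots,\p_k\rangle}\Rightarrow\exp(\kk\p)\subset\overline{G}$, the idea is to build $\overline{G}$ from below, imitating the degeneration trick of Section~\ref{ss:degeneration} but with no torus available. The key step is a \emph{commutator lemma}: if $U,V\in\LND(A)$ generate $\GG_a$-subgroups of $G$ and $N$ is the largest integer with $\ad_U^N(V)\neq 0$, then $\exp(\kk\,\ad_U^N(V))\subset\overline{G}$. To prove it, consider the map $\rho$ given for $u\neq 0$ by
\[
\rho(u)=\exp(u^{-1}U)\,\exp(su^N V)\,\exp(-u^{-1}U)
=\exp\Bigl(\,\sum_{m=0}^{N}\frac{s\,u^{N-m}}{m!}\,\ad_U^m(V)\Bigr),
\]
the second equality coming from \eqref{eq:BCH} and Lemma~\ref{lem:ad-LND}. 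The right-hand side depends polynomially on $u$ and acts on each $a\in A$ by a polynomial in $u$ of bounded degree, so $\rho$ extends to a morphism $\A^1\to\Aut(X)$ with $\rho(0)=\exp\bigl(\tfrac{s}{N!}\ad_U^N(V)\bigr)$; since $\rho(u)\in G$ for $u\neq 0$, Lemma~\ref{lem:orbit-closures}(b) gives $\exp\bigl(\tfrac{s}{N!}\ad_U^N(V)\bigr)\in\overline{G}$ for every $s\in\kk$, and $\rho_s(u)\,\rho_{s'}(u)=\rho_{s+s'}(u)$ shows this is a $\GG_a$-subgroup (so $\ad_U^N(V)\in\LND(A)$). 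One then iterates this step — applying it now to $\GG_a$-subgroups of $\overline{G}$, which is already closed — to place inside $\overline{G}$ every iterated bracket of the $\p_i$ that happens to be locally nilpotent, and finally handles linear combinations $\delta_1+\delta_2\in\LND(A)$ with $\exp(\kk\delta_i)\subset\overline{G}$ by a further rescaled Baker--Campbell--Hausdorff limit. The difficulty here is combinatorial: an arbitrary $\p\in{\rm Lie}\,\langle\p_1,\ldots,\p_k\rangle\cap\LND(A)$ need not be a single iterated bracket, and the obvious path to it inside the free Lie algebra may pass through non-locally-nilpotent elements; one needs a normal form expressing the elements of ${\rm Lie}\,\langle\p_1,\ldots,\p_k\rangle\cap\LND(A)$ using only the two operations — ``top iterated commutator of two LNDs'' and ``sum of two LNDs that is an LND'' — that $\overline{G}$ certainly performs.

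For the converse, ${\exp(\kk\p)\subset\overline{G}\Rightarrow\p\in{\rm Lie}\,\langle\p_1,\ldots,\p_k\rangle}$, I would introduce the Lie algebra ${\rm Lie}\,(\overline{G})$ of the closed ind-group $\overline{G}$ (see \cite{FK,Kum}) and argue in three steps: (i) $\exp(\kk\p)\subset\overline{G}$ forces $\p\in{\rm Lie}\,(\overline{G})$, the infinitesimal form of Lemma~\ref{lem:orbit-closures}(b); (ii) ${\rm Lie}\,(\overline{G})$ equals the closure of ${\rm Lie}\,\langle\p_1,\ldots,\p_k\rangle$ inside $\Der(A)$ for the topology given by a filtration $A=\bigcup_r A_r$, since $\overline{G}\cap\Sigma_s$ is carved out by the equations defining $G\cap\Sigma_s$ whose differentials at the identity compute the degree-$s$ truncation of ${\rm Lie}\,\langle\p_1,\ldots,\p_k\rangle$; and (iii) this closure contains no locally nilpotent derivation beyond ${\rm Lie}\,\langle\p_1,\ldots,\p_k\rangle$ itself. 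Step (iii) is the main obstacle: I see no formal reason excluding an ``exotic'' locally nilpotent limit of Lie polynomials in the $\p_i$ which is not itself such a Lie polynomial, and ruling this out appears to need a structural classification — in the spirit of Lemma~\ref{lem:non-homog-deriv}(c) in the toric case — of the locally nilpotent derivations that occur as limits of automorphisms lying in $G$.

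It is worth noting that the finite-dimensional case is easy: if ${\rm Lie}\,\langle\p_1,\ldots,\p_k\rangle$ is finite-dimensional, then by \cite[Thm.\ 5.5.1]{KZ} the group $G$ is a closed algebraic group with ${\rm Lie}\,(G)={\rm Lie}\,\langle\p_1,\ldots,\p_k\rangle$; by Lemma~\ref{lem:ad-LND} a locally nilpotent $\p$ in this Lie algebra is $\ad$-nilpotent, so $\exp(\kk\p)\subset G=\overline{G}$, while conversely $\exp(\kk\p)\subset G$ gives $\kk\p\subset{\rm Lie}\,(G)$. Thus the entire content of the conjecture is concentrated in the infinite-dimensional case — precisely where the closure operation is least transparent.
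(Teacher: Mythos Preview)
The statement you are attempting to prove is a \emph{conjecture} in the paper, not a theorem: the paper offers no proof, only the remark that the equivalence holds trivially when $G$ is a closed algebraic group (equivalently, when ${\rm Lie}\,\langle\p_1,\ldots,\p_k\rangle$ is finite-dimensional, by \cite[Thm.\ 5.5.1]{KZ}), together with a fragment of partial evidence in the toric setting coming from Lemmas~\ref{lem:iterated} and~\ref{lem:two-roots}. There is therefore nothing to compare your argument against.

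That said, your proposal is honest about exactly the places where it is not a proof. In the forward direction your commutator lemma is correct and is the natural torus-free analogue of the degeneration argument in Proposition~\ref{prop: principal-part}, but as you yourself note, it only produces the \emph{top} iterated bracket $\ad_U^N(V)$, and you have no mechanism for reaching an arbitrary locally nilpotent element of ${\rm Lie}\,\langle\p_1,\ldots,\p_k\rangle$ through a chain of operations each of which stays inside $\LND(A)$; this is a genuine gap, not a combinatorial bookkeeping issue. In the converse direction, your step~(iii) --- that the closure of ${\rm Lie}\,\langle\p_1,\ldots,\p_k\rangle$ in $\Der(A)$ contains no new locally nilpotent derivations --- is exactly the content of the conjecture on that side, and you correctly say you see no way to exclude exotic limits. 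Your treatment of the finite-dimensional case matches the paper's own observation. In short: you have reproduced the known easy case, sharpened the heuristic evidence for one implication, and accurately located the two obstructions that keep this a conjecture rather than a theorem.
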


Of course, the latter holds if $G$ is an algebraic group. One more justification is provided by Lemmas~\ref{lem:iterated} 
and~\ref{lem:two-roots}. Indeed, letting $X=\Spec(A)$ be a nondegenerate toric affine variety of dimension $n\ge 2$, 
in the notation of Lemma~\ref{lem:iterated} for two LNDs 
$U=\p_1$ and $V=\p_2$ of $A$ and for $m=\delta$ the nonzero homogeneous derivation $W=\ad_{U}^\delta(V)$
is an LND. According to Lemma~\ref{lem:two-roots} the associated root subgroup $H_{W}=\exp(\kk W)$ 
is contained in the closure $\overline{G}$ where $G=\langle H_U,\,H_V\rangle$.

\medskip

\noindent \emph{Added in proof}.  For $n>3$ one can equally deduce Theorem~\ref{thm:III} by using \cite{Chi} 
and \cite[Thm.\ 1.4]{KBYE}. However, this does not work for $n=2$ and $n=3$. 

Answering our question in Remark~\ref{rem:n-2}.2, D.~Lewis, K.~Perry, and A.~Straub proved recently (see \cite[Cor.\ 21]{LPS}) that for $n=2$ 
the group generated by the root subgroups 
\[H_1=\{(x,y)\mapsto (x+t_1y^2,y)\}\quad\mbox{and}\quad H_{2}=\{(x,y)\mapsto (x,y+t_2x)\},\quad t_1,\,t_2\in\kk\] 
acts infinitely transitively on $\A^2\setminus 0$. It remains unclear, however, whether for $n=2$ in Theorem~\ref{thm:III} 
one can find a subgroup $G\subset\Aut(\A^2)$ which acts infinitely transitively on $\A^2$ and is generated 
by just two one-parameter $\GG_a$-subgroups of $\Aut(\A^2)$.


\begin{thebibliography}{}
%
\bibitem{An} R.~B.~Andrist. \emph{Integrable generators of Lie algebras of vector fields on $\CC^n$}. arXiv:1808.06905, 2018.
%
\bibitem{ADHL} I.~Arzhantsev, U.~Derenthal,  J.~Hausen, and A.~Laface. \emph{Cox rings}. Cambridge Studies 
in Advanced Mathematics 144. Cambridge University Press, Cambridge, 2015.
%
\bibitem{AFKKZ}
I.~Arzhantsev, H.~Flenner, S.~Kaliman, F.~Kutzschebauch, and M.~Zaidenberg, 
\emph{Flexible varieties and automorphism groups}. Duke Math.~J. 162 (2013), 767--823.
%
\bibitem{AKZ} 
I.~Arzhantsev, K.~Kuyumzhiyan, and M.~Zaidenberg,
\emph{Flag varieties, toric varieties, and suspensions: 
three instances of infinite transitivity}. Sb.~Math.\ 203
(2012), 3--30.
%
\bibitem{APS} I.~Arzhantsev, A.~Perepechko, and H.~S\"uss. 
\emph{Infinite transitivity on universal torsors}.
J.~London Math.~Soc.\ 89 (2014), 762--778.
%
\bibitem{BEE} Yu.~Berest, A.~Eshmatov, and F.~Eshmatov.
\emph{Multitransitivity of
Calogero–Moser spaces}.
Transform.~Groups 21 (2016), 35--50.
%
\bibitem{Bod00} Yu.~Bodnarchuk. 
\emph{On the transitivity of the action of a group of biregular automorphisms containing the affine group}.  (Ukrainian)  
Dop.\ NAN Ukraine (Comptes Rendues Nat.\ Acad.\ Sci.\ Ukraine)
1 (1995),  5--7.
%
\bibitem{Bod01} Yu.~Bodnarchuk. \emph{Some extreme properties of the affine group as 
an automorphisms group of the affine space}. Contribution to General Algebra 13 (2001), 15--29.
%
\bibitem{Bod02-2} Yu.~Bodnarchuk. \emph{
Affine group as a subgroup of biregular transformation
group of an affine space}   (Ukrainian). NaUKMA Zap., Ky\"iv 20 (2002), 6--10.
%
\bibitem{Bod05}
Yu.~Bodnarchuk. \emph{On generators of the tame invertible
polynomial maps group}.
Int.\ J.\ Algebra Comput.\
15 (2005), 851--867.
%
\bibitem{BKK} F.~Bogomolov, I.~Karzhemanov, and K.~Kuyumzhiyan. \emph{Unirationality and existence of infinitely transitive models}. 
In: Birational Geometry, Rational Curves, and Arithmetic.  F.~Bogomolov, B.~Hassett, and Yu.~Tschinkel (eds.), pages 77--92. 
Springer-Verlag, New York e.a. 2013.
%
\bibitem{BF} A.~Bonfiglioli and R.~Fulci.  \emph{Topics in noncommutative algebra. 
The theorem of Campbell, Baker, Hausdorff and Dynkin}. Lect.~Notes Math.~2034, Springer, 2011.
%
\bibitem{Chi}
 A.~Chistopolskaya. \emph{On nilpotent generators of the special linear Lie algebra}. Linear Algebra Appl.\ 559 (2018), 73--79.
%
\bibitem{Cox} D.~A.~Cox.
\emph{The homogeneous coordinate ring of a toric variety}. 
J.\ Algebraic Geom.\ 4 (1995),  17--50. Erratum: J.\ Algebraic Geom.\ 23 (2014),  393--398.
%
\bibitem{CLS} D.~A.~Cox,
J.~B.~Little and
H.~K.~Schenck. \emph{Toric Varieties}.  Graduate Studies in Mathematics, 124. Amer.\ Math.\ Soc., Providence, RI, 2011.
%
\bibitem{Dem}
M.~Demazure. \emph{Sous-groupes alg\'{e}briques de rang maximum du
groupe de Cremona}. Ann.~Sci.~\'Ecole Norm.~Sup. (4) {\bf 3} (1970),
507--588.
%
\bibitem{Dub} A.~Dubouloz. \emph{Flexible bundles over rigid affine surfaces}. Comment.~Math.~Helv.\ 90 (2015), 121--137.
%
\bibitem{Edo} E.~Edo. \emph{Closed subgroups of the polynomial
automorphism group containing the
affine subgroup}. Transform.\ Groups 23 (2018), 71--74.
%
\bibitem{EL} 
E.~Edo and D.~Lewis. \emph{Co-tame polynomial automorphisms}. Internat.\ J.\ Algebra Comput.\  
doi.org/10.1142/S0218196719500292
%
\bibitem{FKZ} H.~Flenner, S.~Kaliman, and M.~Zaidenberg.
\emph{The Gromov-Winkelmann theorem for flexible varieties}. J.~Eur.~Math.~Soc.\ 18 (2016), 2483--2510.
%
\bibitem{FKZ-I} H.~Flenner, S.~Kaliman, and M.~Zaidenberg.
\emph{Cancellation for surfaces revisited}. I. arXiv:1610.01805v2, 2016; II. arXiv:1801.02274, 2017.
%
\bibitem{For} F.~Forstneri\v{c}. \emph{Stein Manifolds and Holomorphic Mappings. The Homotopy Principle in Complex Analysis}. 
A Series of Modern Surveys in Mathematics, 3. Springer-Verlag, 2011.
%
\bibitem{Fre}
G.~Freudenburg. \emph{Algebraic theory of locally nilpotent
derivations}. Encyclopaedia of Math.\ Sci.\ 136, Springer-Verlag, 2006.
%
\bibitem{Ful}
W.~Fulton. \emph{Introduction to Toric Varieties}. Princeton University
Press, 1993.
%
\bibitem{Fur15}
J.-P.~Furter. \emph{
Polynomial composition rigidity and plane
polynomial automorphisms}. J.\ London Math.\ Soc.\ (2) 91 (2015), 180--202.
%
\bibitem{FK} J.-P.~Furter and H.~Kraft. \emph{On the geometry of the automorphism group of
affine n-space}. arXiv:1809.04175, 2018.
%
\bibitem{Giz} M.~H.~Gizatullin. \emph{Quasihomogeneous affine surfaces}. Math.\ USSR Izv.\ 5 (1971), 1057--1081.
%
\bibitem{KK} S.~Kaliman and F.~Kutzschebauch.  \emph{On the present state of the Andersen-Lempert theory}. 
In: Affine Algebraic Geometry. The Rassell Festschrift. Centre de Recherches Math\'ematiques, Daniel Daigle et.\ al.\ (eds.), pages 85--122. 
CRM Proceedings and Lecture Notes 54, 2011.
%
\bibitem{KBYE} A.~Kanel-Belov, Jie-Tai Yu, and A.~Elishev. \emph{On the augmentation topology 
of automorphism groups of affine spaces and algebras}. Internat.\ J.\ Algebra Comput.\ 28 (2018),  1449--1485.
%
\bibitem{Kot} P.~Kotenkova. \emph{On restriction of roots on affine  $\TT$-varieties}. Beitr\"age zur Algebra und Geometrie 55 (2014), 621--634.
%
\bibitem{Kov} S.~Kovalenko. \emph{Transitivity of automorphism groups of Gizatullin surfaces}. Int.~Math.~Res.~Notices 21 (2015), 11433--11484.
%
\bibitem{KPZ} S.~Kovalenko, A.~Perepechko, and M.~Zaidenberg.
\emph{On automorphism groups of affine surfaces}. In:
Algebraic varieties and automorphism groups.
Advanced Studies in Pure Mathematics 75, pages 207--286. 
K.~Masuda et.\ al.\
(eds.). Hackensack, NJ: World Scientific, 2017.
%
\bibitem{KZ} H.~Kraft and M.~Zaidenberg. \emph{Locally finite group actions and vector fields}. Preprint, 2014 (in preparation).
%
\bibitem{Kum} S.~Kumar. \emph{ Kac-Moody groups, their flag varieties and representation theory}. Progress in Math.\ 204, 
Birkhauser Boston Inc., Boston, MA, 2002.
%
\bibitem{LPS} D.~Lewis, K.~Perry, and A.~Straub. \emph{An algorithmic approach to the 
Polydegree Conjecture for plane polynomial automorphisms}. 	arXiv:1809.09681, 2018.
%
\bibitem{Li}
A.~Liendo. \emph{Affine $T$-varieties of complexity one and locally
nilpotent derivations}. Transform.~Groups 15 (2010), 389--425.
%
\bibitem{Lie}
A.~Liendo. \emph{$\mathbb{G}_a$-actions of fiber type on affine $T$-varieties}. J.\ Algebra 324 (2010), 3653--3665.
%
\bibitem{Lien}
A.~Liendo. \emph{Roots of the affine Cremona group}.
Transform.~Groups 16 (2011), 1137--1142.
%
\bibitem{Ma} M.~Manetti. \emph{The Baker-Campbell-Hausdorff formula}. Notes of a course on deformation theory 2011-12. 
http://www1.mat.uniroma1.it/people/manetti/DT2011/BCHformula.pdf.
%
\bibitem{MPS} M.~Michałek, A.~Perepechko, and H.~S\"uss. 
\emph{Flexible affine cones and flexible coverings}. 	Math.\ Z.\  290 (2018), 1457--1478.
%
\bibitem{PW} J.~Park and J.~Won.
\emph{Flexible affine cones over del Pezzo surfaces of degree $4$}.
Eur.~J.~Math.\ 2 (2016), 304--318.
%
\bibitem{Per} A.~Perepechko. \emph{Flexibility of affine cones over del Pezzo surfaces of degree
$4$ and $5$}. Func.~Anal.~Appl. 47 (2013), 45--52.
%
\bibitem{Pop05} V.~L.~Popov.
\emph{Roots of the affine Cremona group}.
in: Affine Algebraic Geometry,
J.~Gutierrez, V.~Shpilrain, J.–T.~Yu (eds.), pages 12--13.
Contemp.\ Math. 369, Amer.\ Math.\ Soc., Providence, RI, 2005.
%
\bibitem{Pop14} V.~L.~Popov.
\emph{On infinite dimensional algebraic transformation groups}. 
Transform.~ Groups 19 (2014),  549--568.
%
\bibitem{PZ} Yu.~Prokhorov and M.~Zaidenberg.
\emph{Fano-Mukai fourfolds of genus $10$ as compactifications
of $\mathbb{C}^4$}. Eur.\ J.\ Math.\ 4 (2018), 1197--1263.
%
\bibitem{Lena}
E.~Romaskevich. \emph{Sums and commutators of homogeneous locally nilpotent
derivations of fiber type}. J.~Pure~Appl.~Algebra 218 (2014), 448--455.
%
\bibitem{Sha} A.~A.~Shafarevich. \emph{Flexibility
of affine horospherical varieties of semisimple groups}. Sb.\ Math.\
208 (2017), 
285--310.
%
\bibitem{SI} I.~P.~Shestakov and U.~U.~Umirbaev. \emph{The Nagata automorphism is wild}.
Proc.\ Nat.\ Acad.\ Sci.\ USA 100 (2003), 12561--12563.
\end{thebibliography}
\end{document}